\documentclass{amsart}

\usepackage{amsmath,amssymb,amsthm}

\usepackage{graphicx}
\usepackage{epsfig,subfig}

\captionsetup{font={footnotesize,it}}

\theoremstyle{plain}
\newtheorem{theorem}{Theorem}[section]
\newtheorem{corollary}[theorem]{Corollary}
\newtheorem{proposition}[theorem]{Proposition}
\newtheorem{lemma}[theorem]{Lemma}
\newtheorem{remark}{Remark}[section]

\newcommand{\mdef}[1]{\textsl{#1}}
\newcommand{\norm}[1]{\|#1\|}
\newcommand{\indnorm}[1]{|\hspace{-.3ex}|\hspace{-.3ex}|#1|\hspace{-.3ex}|\hspace{-.3ex}|}

\newcommand{\abs}[1]{|#1|}
\newcommand{\posp}[1]{(#1)^+}

\newcommand{\bposp}[1]{\left(#1\right)^+}
\newcommand{\set}[1]{\{#1\}}
\newcommand{\ip}[2]{\sum_k #1_k}
\newcommand{\R}{\mathbb{R}}
\newcommand{\Rnp}{\R^{n}_{\ge0}}
\newcommand{\Rnnp}{\R^{n\times n}_{\ge0}}

\newcommand{\C}{\mathbb{C}}
\newcommand{\e}[1]{{e}^{#1}}
\newcommand{\vun}{\mathbf{1}}
\newcommand{\Gr}[1]{G(#1)}
\newcommand{\SpArb}[1]{\mathcal{A}(#1)}
\newcommand{\tauB}{\tau_B}
\renewcommand{\vec}[1]{\mathbf{#1}}
\newcommand{\Lip}[1]{L_{\max}(#1)}
\newcommand{\Lipm}[1]{L_{\min}(#1)}

\newcommand{\eps}{\varepsilon}
\DeclareMathOperator{\ind}{ind}
\DeclareMathOperator{\diag}{diag}
\DeclareMathOperator{\rank}{rank}

\title[Self-validating web rankings]{Multiple equilibria of nonhomogeneous Markov chains and self-validating web rankings}
\thanks{Part of the results of this paper where announced in the Proceedings of the 2nd Multidisciplinary International Symposium on Positive Systems: Theory and Applications (POSTA 06)~\cite{AGN06}.}

\author[M. Akian]{Marianne Akian}
\address{INRIA, Domaine de Voluceau, BP105, 78153~Le~Chesnay~C\'edex, France}
\email[Marianne Akian]{Marianne.Akian@inria.fr}
\author[S. Gaubert]{St\'ephane Gaubert}
\address{INRIA, Domaine de Voluceau, BP105, 78153~Le~Chesnay~C\'edex, France}
\email[St\'ephane Gaubert]{Stephane.Gaubert@inria.fr}

\author[L. Ninove]{Laure Ninove}
\address{Department of Mathematical Engineering, Universit\'e catholique de Louvain, 1348~Louvain-la-Neuve, Belgium}
\email[Laure Ninove]{Laure.Ninove@uclouvain.be}

\thanks{The first two authors were partially supported by the joint RFBR-CNRS grant number 05-01-02807. Part of this work was done while the third author was visiting INRIA. 
This author was supported by a FNRS research fellowship and partially by research programs IAP~VI/4 DYSCO (Belgian Federal Science Policy) and ARC Large Graphs and Networks (Communaut\'e Fran\c{c}aise de Belgique). The scientific responsibility rests with its authors.}

\date{September 3, 2007}

\keywords{PageRank, stochastic matrix, Markov chain, stationary vector, iterative process, projective metric, Perron--Frobenius theory}

\subjclass[2000]{15A51, 47H07, 15A48, 15A18, 47H12, 60J10, 68P20}

\begin{document}
\maketitle

\begin{abstract}
PageRank is a ranking of the webpages used by the search engine Google to determine in which order to display webpages.  It measures how often a given webpage would be visited by a random surfer on the webgraph.  It seems realistic that this ranking may influence the reputation of the webpages, and therefore the behavior of websurfers.  We propose a simple model taking in account the mutual influence between webranking and websurfing.

We study this model by considering the following iteration on the set of stochastic row vectors: $\vec r(s+1)=\vec u_T(\vec r(s))$, where for all vector $\vec x$, $\vec u_T(\vec x)$ is the unique invariant measure of a primitive stochastic matrix $M_T(\vec x)$ constructed from the adjacency matrix of the webgraph.  The parameter $T>0$ is fixed and measures the confidence of the surfer in the ranking.
We call $T$-PageRank the limit, if it exists, of these iterates when $s$ tends to infinity.
We also consider the simple iteration defined by $ \vec{\tilde r}(s+1)= \vec{\tilde r}(s)M_T(\vec{\tilde r}(s))$.

We prove that, when $T$ is large enough, the fixed point of these iterations is unique and the convergence is global on the domain. But for small values of $T$, at least when the matrix $C$ is positive, there are always several fixed points.

Our analysis uses results of nonlinear Perron--Frobenius theory, Hilbert projective metric and Birkhoff's coefficient of ergodicity.
\end{abstract}

\section{Introduction}

\paragraph{Motivation}

The PageRank algorithm~\cite{BP98} is at the heart of one of the most popular Web search engines.
Its basic idea is to use the graph structure of the Web in order to assign to each webpage a score.
Basically, the PageRank score attributed to the webpages measures \emph{how often a given page would be visited by a random walker on the webgraph}.
Formally, let $C=[C_{ij}]$ be the $n\times n$ adjacency matrix of the webgraph, so that $C_{ij}=1$ if there is a hyperlink from page $i$ to page $j$ and $C_{ij}=0$ otherwise.  For simplicity, we first assume that $C$ is irreducible, that is the webgraph is strongly connected.
Imagine that, when visiting a page $i$, a websurfer chooses randomly the next webpage he/she will visit, among the pages referenced by page $i$, with the uniform distribution.
The trajectory of such a websurfer is a Markov chain with transition matrix $M=[M_{ij}]$, given by
\[
  M_{ij}=\frac{C_{ij}}{\sum_kC_{ik}}\enspace.
\]
In its most basic version, the PageRank vector $\vec r$, the entries of which give the PageRank score of the webpages, is simply defined as the stationary distribution of this random walk on the webgraph.  Thus $\vec r$ is the invariant measure of the irreducible matrix $M$, that is the unique stochastic vector such that
\[
  \vec r = \vec r M\enspace.
\]

However, the assumption that a websurfer makes uniform draws on the web\-graph may seem unrealistic: a websurfer could have an a priori idea of the value of webpages, therefore favoring pages from reputed sites.  The webrank may influence the reputation of the websites, and hence it may influence the behavior of the websurfers, which ultimately may influence the webrank.
In this paper, we propose a simple model taking into account the mutual influence between webranking and websurfing.

\smallskip
\paragraph{The $T$-PageRank}
We consider a sequence of stochastic vectors, representing successive webranks, $\vec r(0), \vec r(1),\dots$, which is defined inductively as follows.  The current ranking $\vec r(s)$ induces a random walk on the webgraph.  We assume that the websurfer moves from page $i$ to page $j$ with probability proportional to $C_{ij}\e{E(\vec r(s)_j)/T}$, where $E$ is an increasing function, the \mdef{energy}, and $T>0$ is a fixed positive parameter, the \mdef{temperature}.  The websurfer's trajectory is therefore a Markov chain with transition matrix $M(\vec r(s))$, where $M(\vec x)$ is defined for all vector $\vec x$ by
\[
  M(\vec x)_{ij}=\frac{C_{ij}\e{E(\vec x_j)/T}}{\sum_kC_{ik}\e{E(\vec x_k)/T}}\enspace.
\]
The unique stationary distribution of this Markov chain, i.e.\ the invariant measure of the matrix $M(\vec r(s))$, is then used to update the webrank.  Thus,
\begin{subequations}\label{eq:iter-uT}
\begin{equation}\label{eq:iter-uT-a}
  \vec r(s+1)=\vec u(\vec r(s))\enspace,
\end{equation}
where, for all vector $\vec x$, $\vec u(\vec x)$ is the unique stochastic vector such that
\begin{equation}\label{eq:iter-uT-b}
  \vec u(\vec x)=\vec u(\vec x)M(\vec x)\enspace.
\end{equation}
\end{subequations}
We call \mdef{$T$-PageRank} the limit of $\vec r(s)$ when $s$ tends to infinity, if it exists, or a fixed point of the map $\vec u$.

Note that if $\vec r(0)$ is the uniform distribution, then $\vec r(1)$ is the classical PageRank.
Note also that the temperature $T$ measures the randomness of the process.  If $T$ is small, with overwhelming probability, the websurfer shall move from page $i$ to one of the pages $j$ referenced by page $i$ of best rank, i.e.\ maximizing $\vec r(s)_j$, whereas if $T=\infty$, the websurfer shall draw the next page among the pages $j$ referenced by the page $i$, with the uniform distribution, as in the standard webrank definition.  For $T=\infty$, the $T$-PageRank coincides with the classical PageRank, because in this case $M(\vec r(s))=M$.

We also consider the simple iteration defined by
\begin{equation}\label{eq:iter-fT}
  \vec{\tilde r}(s+1)= \vec f(\vec{\tilde r}(s))\enspace, \quad \text{ with }
  \vec f(\vec x)=\vec x M(\vec x)\enspace,
\end{equation}
where $\vec{\tilde r}(0)$ is an arbitrary stochastic vector.  From a computational point of view, this is similar to the standard power method.

\smallskip
\paragraph{Main results}
Our first main result shows that, if the temperature $T$ is sufficiently large, the $T$-PageRank exists, is unique and does not depend on the initial ranking.  Moreover, at least when the matrix $C$ is primitive, the generalized power algorithm~$(\ref{eq:iter-fT})$ can be used to compute the $T$-PageRank.

\begin{theorem}\label{thm-intro:T-large}
  If $T\ge n\,\mathrm{Lip}(E)$, where $\mathrm{Lip}(E)$ is the Lipschitz constant of the function $E$, then the map $\vec u$ given by~$(\ref{eq:iter-uT-b})$ has a unique fixed point and the iterates~$(\ref{eq:iter-uT-a})$ converge to it for every initial ranking.
  Moreover, if $C$ is primitive and if $T$ is large enough, the iterates~$(\ref{eq:iter-fT})$ converge to this unique fixed point for every initial ranking.
\end{theorem}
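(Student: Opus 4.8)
The whole argument will be carried out in Hilbert's projective metric, which for positive stochastic vectors reads $d_H(\vec x,\vec y)=\log\bigl(\max_i (\vec x_i/\vec y_i)\big/\min_i(\vec x_i/\vec y_i)\bigr)$, together with Birkhoff's contraction coefficient $\tauB$. First I would record a compactness fact that makes everything work: since the entries of a stochastic vector lie in $[0,1]$, the weights $\e{E(\vec x_j)/T}$ range over a fixed compact subinterval of $(0,\infty)$, so the nonzero entries of $M(\vec x)$ are bounded away from $0$ uniformly in $\vec x$. Consequently every invariant measure $\vec u(\vec x)$ lies in a fixed compact subset $K$ of the open simplex, on which $d_H$ is a complete metric; the iterations will take place inside $K$.

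For the first assertion the goal is to prove that $\vec u$ is a $d_H$-contraction of ratio at most $n\,\mathrm{Lip}(E)/T$. The core is a sensitivity estimate for the invariant measure under a column rescaling of the kernel. Eliminating the row sums from $\vec u=\vec u M(\vec x)$ gives $\vec u(\vec x)_j=\e{E(\vec x_j)/T}\sum_i C_{ij}\,\vec u(\vec x)_i/(C\mathbf w)_i$ with $\mathbf w_k=\e{E(\vec x_k)/T}$; subtracting the analogous identity for $\vec y$, the difference $\psi=\log\vec u(\vec x)-\log\vec u(\vec y)$ is seen to satisfy
\begin{equation*}
\psi_j=\delta_j+\log\sum_i\mu^{(j)}_i\,\e{\psi_i-\gamma_i},
\end{equation*}
where $\delta_j=\bigl(E(\vec x_j)-E(\vec y_j)\bigr)/T$, the coefficients $\mu^{(j)}_i\ge0$ sum to $1$ over $i$, and each $\gamma_i=\log\sum_k\nu^{(i)}_k\e{\delta_k}$ is the logarithm of a convex combination of the $\delta_k$, hence lies between $\min_k\delta_k$ and $\max_k\delta_k$. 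Since a log-sum-exp lies between the least and the greatest of its exponents, a short averaging argument bounds the oscillation $\max_j\psi_j-\min_j\psi_j=d_H(\vec u(\vec x),\vec u(\vec y))$ in terms of the oscillation of $\delta$. Estimating $\operatorname{osc}(\delta)$ through $\mathrm{Lip}(E)/T$ and tracking the dimensional dependence then yields the ratio $n\,\mathrm{Lip}(E)/T$. For $T\ge n\,\mathrm{Lip}(E)$ this makes $\vec u$ non-expansive; the strict contraction needed for a unique, globally attracting fixed point comes either because the averaging inequalities are strict unless $\vec x$ and $\vec y$ are projectively equal (the irreducibility of $C$ coupling all coordinates), or, in the boundary case, from the order-preserving and homogeneity properties of $\vec u$ via nonlinear Perron--Frobenius theory. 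Banach's fixed-point theorem on the complete space $K$ then gives uniqueness and convergence of~$(\ref{eq:iter-uT-a})$ from every initial ranking.

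For the second assertion, observe first that $\vec x=\vec f(\vec x)$ means $\vec x=\vec x M(\vec x)$, i.e.\ $\vec x$ is the invariant measure of $M(\vec x)$, so the fixed points of $\vec f$ are exactly those of $\vec u$; by the first part there is a unique one, $\vec r^\ast$. To study the orbit I would estimate $d_H(\vec f(\vec x),\vec f(\vec y))=d_H(\vec x M(\vec x),\vec y M(\vec y))$ by inserting $\vec y M(\vec x)$: Birkhoff gives $d_H(\vec x M(\vec x),\vec y M(\vec x))\le\tauB(M(\vec x))\,d_H(\vec x,\vec y)$, while $d_H(\vec y M(\vec x),\vec y M(\vec y))$ is the effect of the slow column rescaling $\e{\delta}$ and is $O(1/T)\,d_H(\vec x,\vec y)$ by the estimate above. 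The obstruction is that $M(\vec x)$ inherits the zero pattern of $C$, so $\tauB(M(\vec x))=1$ and a single step need not contract at all; this is precisely why primitivity is assumed.

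The hard part, and the real content of the second assertion, is to pass from single steps to the whole orbit. Because $C$ is primitive, some power $C^k$ is strictly positive, and the uniform lower bound on the nonzero entries of the matrices $M(\cdot)$ furnishes a constant $\tau<1$ with $\tauB\bigl(M(\vec z_0)\cdots M(\vec z_{k-1})\bigr)\le\tau$ for \emph{every} choice of arguments $\vec z_0,\dots,\vec z_{k-1}$; this is a statement about the nonhomogeneous product of the slowly varying stochastic matrices produced along the iteration. I would then compare the inhomogeneous product $M(\vec{\tilde r}(s))\cdots M(\vec{\tilde r}(s+k-1))$ generated by~$(\ref{eq:iter-fT})$ with the genuine power $M(\vec{\tilde r}(s))^k$, bounding their projective discrepancy by the total variation of the matrices over $k$ consecutive steps, which is of order $k/T$ thanks to the Lipschitz-in-$\vec x$ estimate. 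Taking $T$ large enough that this discrepancy cannot spoil the factor $\tau<1$ shows that $\vec f^{(k)}$ is a $d_H$-contraction on $K$, and hence that the iterates~$(\ref{eq:iter-fT})$ converge to $\vec r^\ast$ for every initial stochastic vector. Handling this nonhomogeneous product uniformly---rather than merely comparing $M(\vec x)$ and $M(\vec y)$ at a single step---is the delicate point, and it is what forces both primitivity and the (possibly larger) threshold on $T$ in this part of the statement.
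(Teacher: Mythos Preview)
Your plan diverges substantially from the paper's, and for the first assertion it contains a genuine gap.

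For uniqueness and convergence of $\vec u$, the paper does \emph{not} attempt a direct $d_H$-contraction estimate. Instead it invokes Tutte's Matrix Tree Theorem to write $\vec u_T(\vec x)=\vec h_T(\vec x)/\sum_k\vec h_T(\vec x)_k$, where each $\vec h_T(\vec x)_r$ is a sum over spanning arborescences of products of exactly $n$ weights $g_T(\vec x_j)$. That degree-$n$ structure is the source of the threshold: from $\ln\bigl(g_T(\vec x_k)/g_T(\lambda\vec x_k)\bigr)\le \Lip{g_T}\ln(1/\lambda)$ one gets $\lambda^{1/n}g_T(\vec x_k)\le g_T(\lambda\vec x_k)$ whenever $n\Lip{g_T}\le 1$, hence every product of $n$ such factors---and so $\vec h_T$ itself---is subhomogeneous. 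Together with the obvious order preservation and a verification that $\vec h_T'(\vec x)$ is primitive, Nussbaum's theorem yields uniqueness and global convergence directly. No metric contraction inequality for $\vec u$ is proved.

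Your functional equation $\psi_j=\delta_j+\log\sum_i\mu_i^{(j)}e^{\psi_i-\gamma_i}$ has $\psi$ on both sides, and the log-sum-exp on the right is $1$-Lipschitz in the sup norm; it does not, by itself, bound $\mathrm{osc}(\psi)$ in terms of $\mathrm{osc}(\delta)$. The ``short averaging argument'' you invoke is not specified, and---more importantly---you give no mechanism by which the dimensional factor $n$ should appear. In the paper that factor comes precisely from the Matrix Tree expression having polynomial degree $n$ in the weights; without that structural input I do not see how your route recovers the stated threshold $T\ge n\,\mathrm{Lip}(E)$, as opposed to some weaker one.

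For the second assertion the paper again takes a different, and shorter, road. It computes $\vec f_T'(\vec v)$ and shows it converges \emph{uniformly} in $\vec v$, as $T\to\infty$, to $A=\diag(C\vun)^{-1}C$. Primitivity of $C$ makes the restriction of $\vec x\mapsto\vec xA$ to the hyperplane $\{\sum_i z_i=0\}$ have spectral radius strictly below $1$, so some vector norm makes that restriction a strict contraction; uniform convergence of the Jacobian then makes $\vec f_T$ itself a contraction on $\Sigma$ for large $T$, and Banach finishes. Hilbert's metric and $\tauB$ are used in the paper only for a separate result in the positive case $C>0$, where a single step already contracts. Your Birkhoff-on-products scheme is not unreasonable, but it is heavier, and controlling the nonhomogeneous product uniformly requires Lipschitz bounds on $\vec f$ that you have not established---whereas the paper's linearization argument sidesteps all of this.
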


On the other hand, for small values of $T$, several $T$-PageRanks exist, depending on the choice of the initial ranking.
In some cases, the $T$-PageRank does nothing but validating the initial ``belief'' in the interest of pages given by the initial ranking.
Consider for instance the graph given in Figure~\ref{fig:ex-self-validating} with adjacency matrix $C=\left(\begin{smallmatrix}0&1&1\\1&1&0\\1&0&1\end{smallmatrix}\right)$, and let $E(x)=x$ for all $x$ and $T=\frac{1}{4}$.  Let $\vec r(0)=\vec{\tilde r}(0)=\left(\frac{1}{3}\,\,\,\frac{1}{3}+\eps\,\,\,\frac{1}{3}-\eps \right)$ for an arbitrary small $\eps>0$.  Then the iterates~$(\ref{eq:iter-uT})$ and~$(\ref{eq:iter-fT})$ converge to a $T$-PageRank close to $\left(0.021\,\,\,0.978\,\,\,0.001 \right)$, so the initial belief that the node~$2$ is more interesting than node~$3$ has strongly increased.
\begin{figure}[htb]
\begin{center}
    \includegraphics[width=5.5cm]{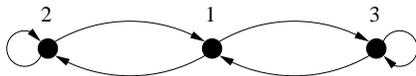}
    \caption{For this graph, self-validating effects appear for small temperatures.}
    \label{fig:ex-self-validating}
\end{center}
\end{figure}
Our second main result shows that the existence of multiple $T$-PageRanks is in fact a general feature, when $T$ is small enough.
\begin{theorem}\label{thm-intro:T-small}
  If $C$ has at least two positive diagonal entries, then multiple $T$-PageRanks exist for $T$ small enough.
\end{theorem}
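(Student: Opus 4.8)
The plan is to produce two distinct fixed points of $\vec u$ by Brouwer's theorem, applied to two disjoint convex regions, one clustered around each self-loop. Let $a\neq b$ be two indices with $C_{aa}=C_{bb}=1$, and fix a small $\eps\in(0,\tfrac12)$, to be adjusted at the end. Consider the cap
\[
  K_a=\set{\vec r \text{ stochastic}:\ \vec r_a\ge 1-\eps}\enspace,
\]
which is compact, convex and nonempty, and likewise $K_b$; since $\eps<\tfrac12$ we have $K_a\cap K_b=\emptyset$. The map $\vec u$ is continuous on the simplex (the invariant measure of an irreducible stochastic matrix depends continuously on its entries, and the entries of $M(\vec r)$ are continuous in $\vec r$). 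Hence, once I show $\vec u(K_a)\subseteq K_a$ and $\vec u(K_b)\subseteq K_b$ for $T$ small, Brouwer yields a fixed point in each cap, and disjointness makes the two distinct, proving the theorem.

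The heart of the matter is the inclusion $\vec u(K_a)\subseteq K_a$, i.e.\ the self-validating estimate $\vec u(\vec r)_a\ge 1-\eps$ for every $\vec r\in K_a$. Here the self-loop is crucial: for $\vec r\in K_a$ every other coordinate satisfies $\vec r_j\le\eps<\vec r_a$, so in row $a$ the diagonal term dominates and
\[
  \eta:=1-M(\vec r)_{aa}\le n\,\e{-(E(1-\eps)-E(\eps))/T}\enspace,
\]
making $a$ an almost absorbing state with exponentially small escape probability $\eta$. Writing $\pi=\vec u(\vec r)$ for the invariant measure of $M(\vec r)$ and $R=\max_i\mathbb E_i[\tau_a]$ for the worst-case mean hitting time of $a$, Kac's formula together with a one-step decomposition gives $\mathbb E_a[\tau_a^+]\le 1+\eta R$, hence $\pi_a\ge 1/(1+\eta R)$; so it suffices to show $\eta R\to 0$ as $T\to 0$.

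The main obstacle is precisely the control of $R$: a priori the walk could be trapped for an exponentially long time in a competing low-temperature well --- for instance at the other self-loop $b$, or in any greedy cycle of $\vec r$ avoiding $a$ --- in which case $R$ would blow up and the mass would fail to concentrate at $a$. The cap structure defeats this. Because every coordinate other than $a$ lies in $[0,\eps]$, along a shortest path in $C$ from any node to $a$ each step has conditional probability at least $\tfrac1n\e{-(E(\eps)-E(0))/T}$ (on such a path a node adjacent to $a$ is immediately followed by $a$, whose term dominates the denominator, while every other step involves only coordinates in $[0,\eps]$); using the standing irreducibility of $C$ this path has length at most $n$, whence $R\le n^{\,n+1}\,\e{\,n(E(\eps)-E(0))/T}$. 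Consequently
\[
  \eta R\le n^{\,n+2}\,\e{-(E(1-\eps)-E(\eps)-n(E(\eps)-E(0)))/T}\enspace,
\]
which tends to $0$ as long as the exponent is positive, i.e.\ $E(1-\eps)-E(\eps)>n\,(E(\eps)-E(0))$. Since $E$ is increasing and continuous, as $\eps\downarrow 0$ the left side tends to $E(1)-E(0)>0$ while the right side tends to $0$; thus I first fix $\eps$ small enough to make this inequality strict, then choose $T$ small enough that $\eta R\le\eps$, giving $\vec u(\vec r)_a\ge 1/(1+\eps)\ge 1-\eps$ and hence $\vec u(K_a)\subseteq K_a$. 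The same argument applies verbatim at $b$, and the two fixed points so obtained, lying in the disjoint caps $K_a$ and $K_b$, are distinct, which is the desired multiplicity.
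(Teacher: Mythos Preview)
Your proof is correct, and the top-level architecture---show that the cap $K_a=\{\vec r:\vec r_a\ge 1-\eps\}$ is mapped into itself by $\vec u$, then apply Brouwer in two disjoint caps---is exactly the paper's strategy (Theorem~\ref{thm:mult-pt-fx}). Where you diverge is in the mechanism used to establish the cap invariance. The paper exploits Tutte's Matrix Tree Theorem: it writes $\vec u_T(\vec x)_r$ as a ratio $\vec h_T(\vec x)_r/\sum_k\vec h_T(\vec x)_k$, where $\vec h_T(\vec x)_r$ is a weighted sum over spanning arborescences rooted at $r$, and then bounds $\vec h_T(\vec x)_1$ from below by a single arborescence containing all available arcs into node~$1$, while bounding $\vec h_T(\vec x)_r$ from above by counting how many factors $g_T(\vec x_1)$ can appear. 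You instead argue probabilistically via Kac's formula $\pi_a=1/\mathbb E_a[\tau_a^+]$, combining the near-absorption bound $\eta\le n\,\e{-(E(1-\eps)-E(\eps))/T}$ with a hitting-time bound $R\le n^{n+1}\e{n(E(\eps)-E(0))/T}$ obtained from a shortest-path argument (your observation that on a shortest path only the last node is adjacent to $a$, so the denominator never contains the large term $\e{E(\vec r_a)/T}$, is the key point and is correct).

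Both routes lead to essentially the same quantitative restriction on $\eps$---your condition $E(1-\eps)-E(\eps)>n(E(\eps)-E(0))$ versus the paper's $(n-1)E(\eps_n)\le E(1-\eps_n)$---so neither is sharper. Your argument has the virtue of being self-contained and probabilistically transparent, avoiding the Matrix Tree machinery; the paper's approach has the advantage that the explicit formula~\eqref{eq:h_T} for $\vec h_T$ is reused throughout (e.g.\ in the proof of Theorem~\ref{thm:u_tau-fxpt-conv}), and it extends verbatim to the general weight functions $g_T$ rather than just the exponential $g_T(x)=\e{E(x)/T}$ assumed in the introduction.
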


These two theorems follow respectively from Theorems~\ref{thm:u_tau-fxpt-conv} and~\ref{thm:f_tau-fxpt-conv}, and from Theorem~\ref{thm:mult-pt-fx}, which are stated in a more general framework.

We also study variants, bringing more realistic models of the behavior of the websurfer.  This includes the presence of a ``damping factor'', as in the standard definition of Google's PageRank. We also consider the situation where the websurfer may take the webrank into account only when visiting a special page (the search engine's webpage).  Similar conclusions apply to such variants.

\smallskip
\paragraph{Method}  In order to analyze the map~$(\ref{eq:iter-uT-b})$, we first use Tutte's Matrix Tree Theorem~\cite{Tut84} to express explicitly the invariant measure $\vec u(\vec x)$ in term of the entries of $M(\vec x)$.  Then we study the convergence and the fixed points of~$(\ref{eq:iter-uT})$ by using results of nonlinear Perron--Frobenius theory due to Nussbaum~\cite{Nus88} and Krause~\cite{Kra86}.
To analyze the iteration~$(\ref{eq:iter-fT})$, we use two different approaches, which give convergence results under distinct technical assumptions.  Our first approach is to show that, if $T$ is sufficiently large, the map $\vec f$ from the simplex to itself is a contraction for some norm.  Our second approach uses Hilbert's projective metric and Birkhoff's coefficient of ergodicity.

\smallskip
\paragraph{Related work} 
Several variants of the PageRank are considered in the literature in order to have a more realistic model of the behavior of the websurfers.  For instance, some authors propose to introduce the browser's back button in the model~\cite{FKKRRRST01,BM05,Syd04}.  Other try to develop a topic-sensitive ranking by considering a surfer whose trajectory on the webgraph depends on his query or bookmarks~\cite{Hav02,JW03,RD02}. 
Moreover, the standard PageRank vector is simply the dominant left-eigenvector of some stochastic matrix.  It has therefore received attention from the linear algebra community.  Current research concerns for instance efficient computation methods~\cite{IK06,KHG04,LM06book,LM06,SC05}, or sensitivity analysis~\cite{Kir06,LM04,LM06book} of the Page\-Rank.

Our use of transition probabilities proportional to $C_{ij}\e{E(\vec r(s)_j)/T}$, where $E$ is an energy function and $T$ the temperature, is reminiscent of simulated annealing algorithms.  For a reference, see Catoni~\cite{Cat99}.  In the context of opinion formation, Holyst et al.~\cite{HKS01} study a social impact model where the probability that an individual changes his opinion depends on a ``social temperature'' $T$, which measures the randomness of the process.

The iteration~$(\ref{eq:iter-uT})$
can be studied in the settings of nonlinear Perron--Frobenius theory.  Many works exist in the literature in order to generalize the classical Perron--Frobenius theorems to nonlinear maps on cone satisfying some hypotheses as, for instance, primitivity, positivity or homogeneity.  See Nussbaum~\cite{Nus88} and also~\cite{GW04,Met05} for recent references.

The iteration~$(\ref{eq:iter-fT})$ has been studied by several authors in an abstract setting.
Artzrouni and Gavard~\cite{AG00} analyze their dynamics when $\vec x(s)$ behaves asymptotically like $\lambda^s \vec x_*$ for some $\lambda\neq1$ and $\vec x_*$.  For $\lambda=1$, it can be useful to look at the stability of a linearization of the system near one of its fixed points~\cite{Cas89}.  When $M(\vec x)$ is stochastic and satisfies certain monotonicity conditions, Conlisk~\cite{Con92} proves the convergence of the iterates~$(\ref{eq:iter-fT})$ to a stable limit.  Lorenz~\cite{Lor05} proves their convergence for \emph{column-}stochastic matrices satisfying classic properties of opinion dynamics models.  In~\cite{Lor06}, he experimentally studies a reformulation of these models with stochastic matrices $M(\vec x)$, where $\vec x$ is an opinion distribution vector.
Iterations like~$(\ref{eq:iter-fT})$ could also be studied in the setting of nonhomogeneous products of matrices.  In this case, iterations like $\vec x(s+1)=\vec x(s)M(s)$ are considered, where the matrices do not depend explicitly on $\vec x$.  Two classical approaches to study their dynamics and convergence are the use of ergodicity coefficients~\cite{Art96,Har02,Sen81,Wol63} or of the joint spectral radius~\cite{Gur95,Har02,JLM03}.
However, the main results of this paper can not be deduced from these works.

\smallskip
\paragraph{Outline of the paper} 
We first introduce some preliminaries in Section~\ref{sec:preliminaries}.  In Section~\ref{sec:main-section}, we prove the main results of this paper, about the existence, uniqueness or multiplicity of the $T$-PageRank, and the convergence of the iterates~$(\ref{eq:iter-uT})$ and~$(\ref{eq:iter-fT})$.  Then, in Section~\ref{sec:refinement}, we introduce a refinement of our model, inspired by the damping factor of the classical PageRank algorithm.  This variant has the advantage of allowing one to work with non strongly connected webgraphs.  Moreover, it gives a more realistic model of the behavior of a standard websurfer.  Section~\ref{sec:particular-cases} is devoted to the estimation of the \mdef{critical temperature}, that is the temperature corresponding to the loss of the uniqueness of the $T$-PageRank, for some particular cases.  We shall see that, even for very small or regular webgraphs, the $T$-PageRank can have a complex behavior.  Finally, in Section~\ref{sec:application-web}, we experiment the $T$-PageRank algorithm on a large-scale example.

\section{Preliminaries}\label{sec:preliminaries}

In this section, we introduce some notations and recall some classical concepts about nonnegative matrices and graphs, and about iterated maps on cones (for background, see~\cite{BP94,HoJo,Nus88,Sen81}).

We denote by $\R_{\ge0}$ and $\R_{>0}$ the sets of nonnegative and positive numbers respectively.
The \mdef{simplex} $\Sigma=\set{\vec x\in\R^n_{\ge0}\colon \sum_i\vec x_i=1}$ is the set of \mdef{stochastic row vectors}.  Its relative interior is denoted by $\mathrm{int}(\Sigma)=\Sigma\cap\R^n_{>0}$, and its boundary by $\partial\Sigma=\Sigma\backslash\mathrm{int}(\Sigma)$.
We also write $\vec x\ge\vec y$ if the vector $\vec x-\vec y\in\Rnp$ and $\vec x>\vec y$ if the vector $\vec x-\vec y\in\R^n_{>0}$.
For some vector $\vec d$, let $D=\diag(\vec d)$ be the diagonal matrix such that $D_{ii}=\vec d_i$ for all $i$ and $D_{ij}=0$ for all $i\neq j$.
Finally, let $\vun$ denote the column vector of all ones, and $\delta_{ij}$ denote the Kronecker delta.

Let us begin with nonnegative matrices and graphs~\cite{BP94,HoJo,Sen81}. Let $A\in\Rnnp$ be a nonnegative matrix.  Its associated \mdef{directed graph} $\Gr{A}$ is a graph with nodes $1,2,\dots,n$, and a directed edge $(i,j)$ if and only if $A_{ij}>0$.  
This graph is \mdef{strongly connected}, that is there exists a directed path between each pair of nodes, if and only if $A$ is \mdef{irreducible}.  It is moreover \mdef{aperiodic} if and only if $A$ is \mdef{primitive}.
Let $r$ be a node of $\Gr{A}$. A directed subgraph $R$ of $\Gr{A}$ which contains no directed cycles and such that, for each node $i\neq r$, there is exactly one edge leaving $i$ in $R$, is called a \mdef{spanning arborescence} rooted at $r$. The set of spanning arborescences of $\Gr{A}$ rooted at $r$ is denoted by $\SpArb{r}$.

The transition probabilities of a \mdef{finite Markov chain}, or equivalently a \mdef{random walk} on a directed graph with weighted edges, can be represented by a \mdef{stochastic matrix}, that is a matrix $A\in\Rnnp$ such that $A\vun=\vun$.  A stochastic vector $\vec u\in\Sigma$ is an \mdef{invariant measure} of $A$ if $\vec uA=\vec u$.  The Perron--Frobenius theory ensures that an irreducible stochastic matrix has a unique invariant measure, which is moreover positive.  An expression of this invariant measure is given by Tutte's Matrix Tree Theorem~\cite{Tut84} 
(for a proof, see Section~$9.6$ of~\cite{BR91}).
\begin{theorem}[Matrix Tree Theorem, Tutte]\label{thm:Tutte}
    Let $A\in\Rnnp$ be an irreducible stochastic matrix,
    and let $\vec u$ be its invariant measure.  Then
    $\vec u={\vec v}/{\sum_i{\vec v_i}}$, where for all index $r$
    \begin{equation}\label{eq:mtt}
        \vec v_r = \sum_{R\in \SpArb{r}}\prod_{(i,j)\in R}A_{ij}\enspace.
    \end{equation}
\end{theorem}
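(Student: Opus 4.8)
The plan is to prove the identity directly, by showing that the vector $\vec v$ with components $\vec v_r = \sum_{R\in\SpArb{r}}\prod_{(i,j)\in R}A_{ij}$ is a left eigenvector of $A$ for the eigenvalue~$1$, i.e.\ $\vec v=\vec v A$, and then to fix the normalization using the Perron--Frobenius theorem. Since $A$ is irreducible, $\Gr{A}$ is strongly connected, so for every root $r$ there is at least one spanning arborescence rooted at $r$, and each such $R$ contributes the \emph{positive} weight $\prod_{(i,j)\in R}A_{ij}>0$ (its edges lie in $\Gr{A}$ and hence correspond to positive entries of $A$); thus $\vec v>0$. As the invariant measure is the unique stochastic left $1$-eigenvector of the irreducible matrix $A$ and is positive, once $\vec v=\vec v A$ is established we immediately obtain $\vec u=\vec v/\sum_i\vec v_i$.

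To prove $\vec v=\vec v A$ I would use a weight-preserving double count. Call a subgraph $H$ of $\Gr{A}$ a \emph{spanning functional graph} if it spans all $n$ nodes and every node has exactly one outgoing edge in $H$; such an $H$ has $n$ edges, and if it is weakly connected it is unicyclic, consisting of a single directed cycle with in-trees attached. Write $w(H)=\prod_{(i,j)\in H}A_{ij}$. The claim is that, for each $r$,
\[
  (\vec v A)_r \;=\; \sum_{H}w(H) \;=\; \vec v_r \enspace,
\]
where $H$ ranges over the connected spanning functional subgraphs of $\Gr{A}$ whose unique cycle passes through $r$. For the left equality, $(\vec v A)_r=\sum_j A_{jr}\vec v_j=\sum_j\sum_{R\in\SpArb{j}}A_{jr}\,w(R)$, and the map $(R,(j,r))\mapsto R\cup\set{(j,r)}$ adjoins to an arborescence rooted at $j$ the edge into $r$ that closes the cycle $r\leadsto j\to r$; this is a weight-preserving bijection onto the target set, its inverse deleting the in-edge of $r$ on the cycle. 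For the right equality, use stochasticity to write $\vec v_r=\vec v_r\sum_k A_{rk}=\sum_k A_{rk}\sum_{R\in\SpArb{r}}w(R)$, and note that $(R,(r,k))\mapsto R\cup\set{(r,k)}$ adjoins to an arborescence rooted at $r$ the cycle edge leaving $r$; this is again a weight-preserving bijection onto the same target set, its inverse deleting the out-edge of $r$ on the cycle. Comparing the two yields $\vec v A=\vec v$.

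The main obstacle is the careful bookkeeping of these two bijections: one must check that adjoining a single edge to a spanning arborescence always produces a \emph{connected} unicyclic functional graph, that the adjoined edge is precisely the in- (resp.\ out-) edge of $r$ on the resulting cycle, and that deleting that distinguished edge recovers an arborescence with the correct root, so that the correspondences are genuinely inverse to one another and land in the same set of graphs. A fully algebraic alternative would express $\vec v_r$ as the principal cofactor $\det\big((I-A)^{(r,r)}\big)$ of the Laplacian $I-A$: the rows of $\operatorname{adj}(I-A)$ are left null vectors of $I-A$ while its columns are multiples of $\vun$ (because $(I-A)\vun=0$ and $1$ is a simple eigenvalue), which forces the cofactors $\vec v_r$ to be proportional to the invariant measure; in that route the crux shifts to the determinantal form of the directed Matrix Tree Theorem, proved by a sign-reversing involution on the expansion of the cofactor.
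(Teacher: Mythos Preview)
The paper does not give its own proof of this statement: Theorem~\ref{thm:Tutte} is quoted as a classical preliminary result, with a pointer to Section~9.6 of Brualdi--Ryser~\cite{BR91} for a proof. So there is no argument in the paper to compare against; the authors simply invoke the theorem and move on to use it in Lemma~\ref{lem:expr-u_T}.

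Your proposed proof is correct and is essentially one of the standard combinatorial proofs of the directed Matrix Tree Theorem. The two bijections you describe are sound: adjoining the edge $(j,r)$ to $R\in\SpArb{j}$ indeed yields a connected spanning functional graph whose unique cycle contains $(j,r)$ and hence $r$, because in $R$ every directed path ends at $j$, so the path out of $r$ in $R$ reaches $j$ and closes up via the new edge; conversely, in a connected spanning functional graph whose cycle passes through $r$, the out-edge of $r$ necessarily lies on the cycle (since $r$ is on the cycle and has a unique out-edge), and deleting it leaves an arborescence rooted at $r$, while deleting the unique cycle edge entering $r$ leaves an arborescence rooted at the tail $j$ of that edge. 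The weight factorizations $A_{jr}\,w(R)$ and $A_{rk}\,w(R)$ match $w(H)$ exactly, and the stochasticity $\sum_k A_{rk}=1$ is used precisely where you indicate. The algebraic cofactor route you sketch is also standard and equally valid. Either would be an acceptable substitute for the citation the paper gives.
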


We now present some useful concepts about iterated maps on cones~\cite{Nus88,Sen81}.  A function $\vec h$ is \mdef{subhomogeneous} on a set $U$ if $\lambda \vec h(\vec x)\le\vec h(\lambda \vec x)$ for all $\vec x\in U$ and every $0\le\lambda\le1$. It is \mdef{order-preserving} on $U$ if $\vec h(\vec x)\le\vec h(\vec y)$ for all $\vec x,\vec y\in U$ such that $\vec x\le\vec y$.
\mdef{Hilbert's projective metric} $d_H$, is defined as
\[
    d_H\colon\R^n_{>0}\times\R^n_{>0}\to\R_{\ge0}
    \colon (\vec x,\vec y)\mapsto \max_{i,j}\ln\frac{\vec x_i\vec y_j}{\vec y_i\vec x_j}\enspace.
\]
This metric defines a distance on $\mathrm{int}(\Sigma)$.  An \mdef{induced projective metric} between two positive matrices can be defined as
\[
    d_H\colon\R^{n\times n}_{>0}\times\R^{n\times n}_{>0}\to\R_{\ge0}\colon
    (A,B)\mapsto \sup_{\vec x\in\R^n_{>0}}\, d_H(\vec xA,\vec xB)\enspace.
\]
The \mdef{coefficient of ergodicity} $\tauB$, also known as Birkhoff's contraction coefficient, is defined for a nonnegative matrix $A$ having no zero column as
\[
    \tauB(A)=\sup_{\substack{\vec x,\vec y\in\R^n_{>0}\\\vec x\neq\lambda \vec y}}
    \, \frac{d_H(\vec xA,\vec yA)}{d_H(\vec x,\vec y)}\enspace.
\]
This coefficient $\tauB(A)\in{[0,1]}$, and $\tauB(A)<1$ if and only if $A$ is positive.

The following theorem deals with the fixed points and the convergence of iterated maps on a cone.
This theorem is a simple formulation of the very general Theorems~$2.5$ and~$2.7$ (or more precisely of Corollaries~$2.2$ and~$2.5$) of Nussbaum in~\cite{Nus88}.
\begin{theorem}[Nussbaum]\label{thm:Nussbaum}
    Let $\vec h\colon\R^n_{>0}\to\R^n_{>0}$ be a continuous, order-preserving map
    which  is subhomogeneous on $\mathrm{int}(\Sigma)$.
    Suppose moreover that if $\vec x\in\mathrm{int}(\Sigma)$ is an eigenvector
    of $\vec h$, then $\vec h$ is continuously differentiable on an open neighborhood of
    $\vec x$ and the matrix $\vec h'(\vec x)$ is nonnegative and irreducible.
    Then $\vec h$ has at most one eigenvector in $\mathrm{int}(\Sigma)$.  As a
    consequence, the map $\vec u\colon\mathrm{int}(\Sigma)\to\mathrm{int}(\Sigma)\colon
    \vec x\mapsto{\vec h(\vec x)}/{\sum_i{\vec h(\vec x)_i}}$ has at most one fixed point.

    \noindent Moreover, if $\vec h$ has an eigenvector $\vec x\in\mathrm{int}(\Sigma)$, and if
    $\vec h'(\vec x)$ is primitive, then all the orbits of $\vec u$ converge to
    its unique fixed point $\vec x$.
\end{theorem}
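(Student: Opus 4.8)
The plan is to deduce the statement from Nussbaum's general fixed point theorems by verifying that the present hypotheses fit their framework, while keeping the underlying mechanism in view: order-preserving subhomogeneous maps are nonexpansive for Hilbert's projective metric, and the condition on the derivative at eigenvectors is what upgrades nonexpansiveness to the strict contraction needed for uniqueness and convergence. As a first reduction, I would note that the eigenvectors of $\vec h$ lying in $\mathrm{int}(\Sigma)$ correspond exactly to the fixed points of $\vec u$: since $\vec h(\vec x)>0$ whenever $\vec x>0$, one has $\vec u(\vec x)=\vec x$ if and only if $\vec h(\vec x)=\bigl(\sum_i\vec h(\vec x)_i\bigr)\vec x$, that is, if and only if $\vec x$ is an eigenvector of $\vec h$ with the positive eigenvalue $\lambda=\sum_i\vec h(\vec x)_i$. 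Thus the assertions about $\vec h$ and about $\vec u$ are equivalent, and it suffices to reason with eigenvectors of $\vec h$.

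For the uniqueness, I would first record that an order-preserving map that is subhomogeneous on $\mathrm{int}(\Sigma)$ is nonexpansive for $d_H$. This is elementary: given $\vec x,\vec y\in\mathrm{int}(\Sigma)$, set $m=\min_i\vec x_i/\vec y_i\le 1\le M=\max_i\vec x_i/\vec y_i$, where the outer inequalities hold because both vectors sum to one, so that $m\vec y\le\vec x\le M\vec y$. Order-preservation together with subhomogeneity applied with the factors $m\le1$ and $M\ge1$ gives $m\vec h(\vec y)\le\vec h(\vec x)\le M\vec h(\vec y)$, whence $d_H(\vec h(\vec x),\vec h(\vec y))\le\log(M/m)=d_H(\vec x,\vec y)$. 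Since $d_H$ ignores scaling, the same holds for $\vec u$, so $\vec u$ is a $d_H$-nonexpansive self-map of $\mathrm{int}(\Sigma)$. Now suppose $\vec x\neq\vec y$ are two fixed points of $\vec u$. Then $d_H(\vec x,\vec y)=d_H(\vec u(\vec x),\vec u(\vec y))$, so we are in the equality case of the nonexpansiveness inequality. The differentiability hypothesis and the irreducibility of $\vec h'(\vec x)$ serve precisely to exclude this: irreducibility prevents $\vec h$ from being ``flat'' along any proper coordinate subspace near an eigenvector, so equality in $d_H$ can only occur when $\vec x$ and $\vec y$ are proportional, hence equal in $\Sigma$, a contradiction.

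For the convergence, let $\vec x$ be the unique fixed point of $\vec u$. Here the stronger assumption that $\vec h'(\vec x)$ is primitive replaces irreducibility, the point being the Perron--Frobenius dichotomy: an irreducible nonnegative matrix has a simple Perron root but may carry other eigenvalues of the same modulus, whereas a primitive one has a strict spectral gap. This gap is what turns the nonexpansiveness of $\vec u$ into a strict contraction toward $\vec x$, namely $d_H(\vec u(\vec z),\vec x)<d_H(\vec z,\vec x)$ for every $\vec z\neq\vec x$ in $\mathrm{int}(\Sigma)$; at the linearized level it amounts to the spectral radius of the derivative of $\vec u$ at $\vec x$, restricted to the tangent space of $\Sigma$, being strictly below $1$. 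Global convergence then follows by a standard argument: for any starting point $\vec z$ the sequence $d_H(\vec u^s(\vec z),\vec x)$ is nonincreasing, the orbit therefore stays in a $d_H$-ball around $\vec x$ which is relatively compact in $\mathrm{int}(\Sigma)$, and any accumulation point $\vec w$ of the orbit satisfies $d_H(\vec u(\vec w),\vec x)=d_H(\vec w,\vec x)$; by strict contraction this forces $\vec w=\vec x$, so the whole orbit converges to $\vec x$.

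The step I expect to be the main obstacle is the passage from the pointwise condition on the derivative to the global geometric statement, that is, turning the irreducibility (respectively primitivity) of $\vec h'(\vec x)$ into strictness of the Hilbert-metric inequality that rules out a second eigenvector (respectively forces contraction toward $\vec x$). The nonexpansiveness, the correspondence between eigenvectors and fixed points, and the compactness globalization are all routine; the delicate part is precisely this local-to-global mechanism, and it is exactly what Nussbaum's Corollaries~2.2 and~2.5 are built to provide, so in the write-up I would invoke them at this point rather than reprove them.
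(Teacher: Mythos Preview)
The paper does not give a proof of this theorem at all: it is stated as a reformulation of Nussbaum's Corollaries~2.2 and~2.5 in~\cite{Nus88}, with no argument beyond that citation. Your proposal ultimately lands in the same place---you invoke those same corollaries at the delicate local-to-global step---so your approach is consistent with the paper's, only with additional expository scaffolding (the eigenvector/fixed-point correspondence, the $d_H$-nonexpansiveness computation, the compactness globalization). That extra material is correct and helpful as motivation, but since the paper treats the result as a black-box citation, there is nothing further to compare.
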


We end with this preliminary section with a result of Krause~\cite{Kra86}, also dealing with the fixed point and the convergence of iterated maps on the positive orthant.
\begin{theorem}[Krause]\label{thm:Krause}
    Let $p\colon\R^n_{\ge0}\to\R_{\ge0}$ be a continuous map which is not identically $0$ and 
    such that $p(\lambda\vec x)=\lambda p(\vec x)$ for all $\vec x\in\R^n_{\ge0}$, $\lambda\ge0$;
    and $p(\vec x)\le p(\vec y)$ for all $0\le\vec x\le \vec y$.
    Let $U=\set{\vec x\in\R^n_{\ge0}\colon p(\vec x)=1}$.

    \noindent Let $\vec h\colon\R^n_{\ge0}\to\R^n_{>0}$ be a map such that there exist $\alpha,\beta>0$ and $\vec v\in\R^n_{>0}$ 
    such that $\alpha\vec v\le\vec h(\vec x)\le\beta\vec v$ for all $\vec x\in U$.
    Suppose also that
    $\lambda\vec h(\vec x)\le\vec h(\vec y)$ for all $\vec x,\vec y\in U$ and $0\le\lambda\le1$ such that $\lambda\vec x\le\vec y$;
    and that $\lambda\vec h(\vec x)<\vec h(\vec y)$ if moreover $\lambda<1$ and $\lambda\vec x\neq\vec y$.
    Then $\vec h$ has a unique eigenvector in $U$.  As a
    consequence, the map $\vec u\colon U\to\mathrm{int}(U)\colon
    \vec x\mapsto{\vec h(\vec x)}/{p(\vec h(\vec x))}$ has a unique fixed point in $U$.
    Moreover, all the orbits of $\vec u$ converge to this unique fixed point.
\end{theorem}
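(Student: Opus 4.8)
The plan is to recognize the hypotheses on $\vec h$ as precisely the conditions that turn the normalized map $\vec u$ into a strict contraction for Hilbert's projective metric $d_H$, and then to supply existence and global convergence by a compactness argument. First I would record two reductions. A point $\vec x\in U$ is a fixed point of $\vec u$ if and only if $\vec h(\vec x)=p(\vec h(\vec x))\,\vec x$, that is, $\vec x$ is an eigenvector of $\vec h$ lying in $U$; since $\vec h(\vec x)\in\R^n_{>0}$, any such $\vec x$ is automatically positive, with positive eigenvalue. Moreover $\vec u(\vec x)$ is a positive scalar multiple of $\vec h(\vec x)$, and $d_H$ is invariant under positive rescaling of either argument, so $d_H(\vec u(\vec x),\vec u(\vec y))=d_H(\vec h(\vec x),\vec h(\vec y))$. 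Hence uniqueness of the eigenvector, uniqueness of the fixed point of $\vec u$, and convergence of the orbits will all flow from a single contraction estimate for $\vec h$ in $d_H$.

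The heart of the proof is that estimate. Given $\vec x,\vec y\in U\cap\R^n_{>0}$, set $a=\min_i\vec x_i/\vec y_i$ and $b=\max_i\vec x_i/\vec y_i$, so that $a\vec y\le\vec x\le b\vec y$ and $d_H(\vec x,\vec y)=\ln(b/a)$. Applying $p$ to these inequalities and using its homogeneity and monotonicity, together with $p(\vec x)=p(\vec y)=1$, gives $a\le 1\le b$; this is the step where the assumptions on $p$ are used, and it is exactly what allows the order relations to be fed into the subhomogeneity conditions. From $\tfrac1b\vec x\le\vec y$ with $\tfrac1b\le1$ I obtain $\vec h(\vec x)\le b\,\vec h(\vec y)$, and from $a\vec y\le\vec x$ with $a\le1$ I obtain $a\,\vec h(\vec y)\le\vec h(\vec x)$, whence $d_H(\vec h(\vec x),\vec h(\vec y))\le\ln(b/a)=d_H(\vec x,\vec y)$ and $\vec h$ is nonexpansive. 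When $d_H(\vec x,\vec y)>0$ we have $a<b$, so at least one of $a<1$ or $b>1$ holds (both cannot equal $1$); the corresponding strict hypothesis then produces a strict coordinatewise inequality, which strictly improves the relevant minimal or maximal ratio and yields $d_H(\vec h(\vec x),\vec h(\vec y))<d_H(\vec x,\vec y)$. Thus $\vec u$ is strictly, though not uniformly, contractive on $U\cap\R^n_{>0}$.

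Uniqueness is then immediate: two distinct eigenvectors in $U$ would be two distinct fixed points of $\vec u$, contradicting strict contraction. For existence and global convergence I would exploit the two-sided bound $\alpha\vec v\le\vec h(\vec x)\le\beta\vec v$. Normalizing and again using the homogeneity and monotonicity of $p$ confines the image $\vec u(U)$ to a set $W=\set{\vec w\in U\colon c_1\vec v\le\vec w\le c_2\vec v}$ for suitable $0<c_1\le c_2$. This $W$ is Euclidean-compact and stays uniformly away from $\partial\R^n_{\ge0}$, so on $W$ the projective metric $d_H$ separates points and induces the Euclidean topology, making $(W,d_H)$ compact; moreover $\vec u(W)\subseteq\vec u(U)\subseteq W$. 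Applying Edelstein's fixed point theorem for strictly contractive self-maps of a compact metric space to $\vec u|_W$ then gives a unique fixed point $\vec x^\ast\in W$ to which every orbit in $W$ converges, and since each orbit of $\vec u$ on $U$ enters $W$ after one step, all orbits converge to $\vec x^\ast$. The delicate points I anticipate are the contraction estimate of the second paragraph—in particular the bookkeeping that sharpens the hypotheses into $a\le1\le b$ and the correct invocation of the strict inequality—and the fact that $U$ need not be convex, which blocks a direct Brouwer argument and makes confinement to a compact invariant set together with Edelstein's theorem the natural route to existence.
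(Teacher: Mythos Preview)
This theorem is quoted in the paper as a background result of Krause~\cite{Kra86}; the paper gives no proof of its own, so there is nothing in the text to compare your argument against.

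That said, your proposal is a correct and complete proof of the stated result, and it follows the natural route for theorems of this type. The key steps are all sound: using $p(\vec x)=p(\vec y)=1$ together with the homogeneity and monotonicity of $p$ to force $a\le 1\le b$ for the extremal ratios; feeding this into the subhomogeneity hypotheses to obtain nonexpansiveness of $\vec u$ in $d_H$, with the strict hypothesis upgrading this to strict contractiveness whenever $\vec x\neq\vec y$; and then using the two-sided bound $\alpha\vec v\le\vec h(\vec x)\le\beta\vec v$ to trap every orbit after one step in an order interval $W\subset U\cap\R^n_{>0}$ that is compact for $d_H$, so that Edelstein's theorem applies. Your handling of the strict case is correct: if, say, $a<1$, then $a\vec y\neq\vec x$ because $p(a\vec y)=a<1=p(\vec x)$, so the strict hypothesis does apply and improves the minimal ratio.

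Two small points you might make explicit. First, $p(\vec w)>0$ for every $\vec w\in\R^n_{>0}$ (needed both for $\vec u$ to be well defined and for your constants $c_1,c_2$ to be finite and positive): this follows since $p\not\equiv 0$ gives some $\vec z$ with $p(\vec z)>0$, and any $\vec w>0$ dominates a positive multiple of $\vec z$. Second, the assertion that $d_H$ induces the Euclidean topology on $W$ is standard but worth a one-line justification, since it is what converts Euclidean compactness of $W$ into $d_H$-compactness.
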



\section{Existence, uniqueness, and approximation of the $T$-PageRank}\label{sec:main-section}

\subsection{Hypotheses}

Let $C$ be an $n\times n$ irreducible nonnegative matrix.
For all \mdef{temperature} $T>0$, and all $\vec x\in\Sigma$, let $M_T(\vec x)$ be the irreducible stochastic matrix such that
\[
    M_T(\vec x)_{ij} 
    = \frac{C_{ij}\,g_T(\vec x_j)}{\sum_k C_{ik}\,g_T(\vec x_k)}\enspace,
\]
where $g_T\colon {[0,1]}\to\R_{>0}$ is a continuously differentiable map with $g_T(0)=1$.  We suppose moreover that $g_T$ is increasing with $g_T'\colon {[0,1]}\to\R_{>0}$ and make the following assumptions on the asymptotic behavior of $g_T$
\begin{gather}
  \lim_{T\to0}\Lipm{g_T}=\infty\enspace,  \tag{${A}_0$}\label{eq:hyp-g_T-0}\\
  \lim_{T\to\infty}\Lip{g_T}=0\enspace,  \tag{${A}_\infty$}\label{eq:hyp-g_T-infty}
\end{gather}
where $\Lipm{g_T}$ and $\Lip{g_T}$ are defined as
\[
  \Lipm{g_T}=\min_{x\in{[0,1]}}\frac{g_T'(x)}{g_T(x)} \quad \text{ and } \quad
  \Lip{g_T}=\max_{x\in{[0,1]}}\frac{g_T'(x)}{g_T(x)}\enspace.
\]

\paragraph{Example}
  These hypotheses are satisfied in particular for $g(x)=\e{E(x)/T}$, 
  where the energy function $E$ is continuously differentiable, 
  with $E(0)=0$ and $E'(x)>0$ for all $x\in{[0,1]}$. 
  In this case, $\Lipm{g_T}=\min_{x\in{[0,1]}}{E'(x)}/{T}$ and $\Lip{g_T}=\max_{x\in{[0,1]}}{E'(x)}/{T}$.


\subsection{Preliminary results}

The following elementary lemmas will be useful in the sequel.
\begin{lemma}\label{lem:bornes-ln-Lmax-Lmin}
  For all $x,y\in{[0,1]}$,
\[
    \Lipm{g_T}\,\posp{x-y}
    \le\bposp{\ln \frac{g_T(x)}{g_T(y)}}
    \le\Lip{g_T}\,\posp{x-y}\enspace,
  \]
  where for all $x\in\R$, $x^+=\max\set{0,x}$.
  Moreover, if $x,y\neq0$,
  \[
    \bposp{\ln \frac{g_T(x)}{g_T(y)}}
    \le\Lip{g_T}\,\bposp{\ln \frac{x}{y}}\enspace.
  \]
\end{lemma}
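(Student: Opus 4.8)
The plan is to exploit the identity $\frac{d}{dx}\ln g_T(x)=g_T'(x)/g_T(x)$, so that $\ln(g_T(x)/g_T(y))$ becomes an integral of the logarithmic derivative of $g_T$, which by the very definition of $\Lipm{g_T}$ and $\Lip{g_T}$ lies in the interval $[\Lipm{g_T},\Lip{g_T}]$ at every point of $[0,1]$.

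For the first chain of inequalities, the key preliminary observation is that, since $g_T$ is increasing, $\ln(g_T(x)/g_T(y))$ has the same sign as $x-y$; in particular $\bposp{\ln(g_T(x)/g_T(y))}$ vanishes precisely when $\posp{x-y}$ does. This suggests splitting into two cases. When $x\le y$, both positive parts are zero and the inequality is trivial. When $x\ge y$, I would apply the mean value theorem to $\ln g_T$ on $[y,x]$: there exists $\xi\in[y,x]$ with $\ln(g_T(x)/g_T(y))=(g_T'(\xi)/g_T(\xi))(x-y)$. Since $x-y=\posp{x-y}\ge0$ and $g_T'(\xi)/g_T(\xi)\in[\Lipm{g_T},\Lip{g_T}]$, multiplying through by the nonnegative factor $x-y$ yields the two-sided bound, and the middle term equals $\bposp{\ln(g_T(x)/g_T(y))}$ because it is nonnegative.

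For the second inequality, I would assume $x,y\neq0$ and argue again by cases. The case $x\le y$ is immediate, the left-hand side being zero. For $x\ge y>0$, I would start from the upper bound already established, namely $\bposp{\ln(g_T(x)/g_T(y))}\le\Lip{g_T}(x-y)$, and then relate $x-y$ to $\ln(x/y)$. The one extra ingredient is the elementary inequality $x-y\le\ln(x/y)$ valid for $0<y\le x\le1$, which follows from the mean value theorem applied to $\ln$ on $[y,x]$: one gets $\ln(x/y)=(x-y)/\xi$ for some $\xi\in(0,1]$, and $1/\xi\ge1$ gives the claim. Chaining this with the previous bound, and noting that $\ln(x/y)=\bposp{\ln(x/y)}$ in this case, finishes the argument.

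I do not expect a serious obstacle here. The only point requiring care is to keep the case analysis aligned with the positive-part operator: one must verify that the sign of $\ln(g_T(x)/g_T(y))$ matches that of $x-y$---which is exactly where the monotonicity of $g_T$ enters---so that the positive parts may be dropped or reinstated without error.
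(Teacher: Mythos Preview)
Your proof is correct and follows essentially the same approach as the paper: both arguments boil down to bounding the logarithmic derivative $g_T'/g_T$ between $\Lipm{g_T}$ and $\Lip{g_T}$ (via the mean value theorem applied to $\ln g_T$), and then invoking the elementary inequality $(x-y)^+\le(\ln x-\ln y)^+$ on $(0,1]$ for the second part. The paper's proof is simply a terser version of yours, suppressing the explicit case split on the sign of $x-y$ that you spell out.
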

\begin{proof}
  Let $x,y\in{[0,1]}$.  Then
  \begin{align*}
    \posp{\ln g_T(x)-\ln g_T(y)}& \ge\min_{a\in{[0,1]}}\frac{g_T'(a)}{g_T(a)}\,\posp{x-y}=\Lipm{g_T}\,\posp{x-y}\enspace,\\
    \posp{\ln g_T(x)-\ln g_T(y)}& \le\max_{a\in{[0,1]}}\frac{g_T'(a)}{g_T(a)}\,\posp{x-y}=\Lip{g_T}\,\posp{x-y}\enspace.
  \end{align*}
  Moreover, $\posp{x-y}\le\posp{\ln x-\ln y}$ if $x,y\in{]0,1]}$.
\end{proof}

\begin{lemma}
  Let $x,y\in{[0,1]}$.  If $x>y$, then $\lim_{T\to0}{g_T(x)}/{g_T(y)}=\infty$.
\end{lemma}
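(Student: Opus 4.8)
The plan is to reduce the statement to the lower bound already established in Lemma~\ref{lem:bornes-ln-Lmax-Lmin}, combined with the asymptotic assumption~$(A_0)$. First I would observe that since $x>y$ we have $\posp{x-y}=x-y>0$, a fixed strictly positive constant independent of $T$. Next, because $g_T'$ takes values in $\R_{>0}$, the map $g_T$ is strictly increasing, so $x>y$ forces $g_T(x)>g_T(y)$ and hence $\ln\frac{g_T(x)}{g_T(y)}>0$. This means the positive part in Lemma~\ref{lem:bornes-ln-Lmax-Lmin} can be dropped, and its lower bound reads
\[
  \ln\frac{g_T(x)}{g_T(y)}\ge\Lipm{g_T}\,(x-y)\enspace.
\]

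Then I would invoke hypothesis~$(A_0)$, namely $\lim_{T\to0}\Lipm{g_T}=\infty$. Since $x-y>0$ is a constant, the right-hand side above tends to $+\infty$ as $T\to0$; hence $\ln\frac{g_T(x)}{g_T(y)}\to+\infty$ and therefore $g_T(x)/g_T(y)\to\infty$, which is the desired conclusion.

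I do not expect any genuine obstacle here, as the statement is essentially an immediate corollary of the lower bound in Lemma~\ref{lem:bornes-ln-Lmax-Lmin}. The only point deserving a moment's care is the removal of the positive part, which relies on the strict monotonicity of $g_T$ (guaranteed by $g_T'>0$); without this observation one would only control $\bposp{\ln\frac{g_T(x)}{g_T(y)}}$ rather than the logarithm itself.
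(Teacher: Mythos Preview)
Your proposal is correct and follows essentially the same route as the paper, which simply states that the result follows directly from Lemma~\ref{lem:bornes-ln-Lmax-Lmin} and Assumption~\eqref{eq:hyp-g_T-0}. Your added remark on dropping the positive part is harmless but not strictly needed: once $\Lipm{g_T}(x-y)>0$, the inequality $\bposp{\ln\frac{g_T(x)}{g_T(y)}}\ge \Lipm{g_T}(x-y)$ already forces $\ln\frac{g_T(x)}{g_T(y)}>0$, so the positive part is automatically the logarithm itself.
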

\begin{proof}
  This result follows directly from Lemma~\ref{lem:bornes-ln-Lmax-Lmin} and Assumption~\eqref{eq:hyp-g_T-0}.
\end{proof}

\begin{lemma}\label{lem:M_T-infty}
  The map $g_T$ tends to the constant function equal to $1$, uniformly in ${[0,1]}$, when $T$ tends to infinity.
\end{lemma}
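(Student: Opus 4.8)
The plan is to derive uniform two-sided bounds on $g_T$ on the whole interval $[0,1]$ and then let $T\to\infty$, using only Lemma~\ref{lem:bornes-ln-Lmax-Lmin} together with Assumption~\eqref{eq:hyp-g_T-infty}. The starting observation is that the problem is really one-sided: since $g_T$ is increasing and $g_T(0)=1$, we automatically have $g_T(x)\ge 1$ for every $x\in{[0,1]}$, so $\ln g_T(x)\ge 0$ and the lower bound in the desired convergence is free. The only work is to control how far above $1$ the function $g_T$ can climb.

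First I would apply Lemma~\ref{lem:bornes-ln-Lmax-Lmin} with the choice $y=0$. Because $g_T(0)=1$ and $g_T(x)\ge 1$, the positive part on the left is just $\ln g_T(x)$ itself, and $\posp{x-0}=x$ since $x\ge 0$, so the upper estimate of the lemma reads $\ln g_T(x)\le\Lip{g_T}\,x$. Using $x\le 1$ this gives the uniform bound $0\le\ln g_T(x)\le\Lip{g_T}$ for all $x\in{[0,1]}$, hence $1\le g_T(x)\le\e{\Lip{g_T}}$. Consequently
\[
  \sup_{x\in{[0,1]}}\babs{g_T(x)-1}\le\e{\Lip{g_T}}-1 \enspace.
\]
The right-hand side is independent of $x$, which is exactly what uniform convergence requires.

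Finally I would invoke Assumption~\eqref{eq:hyp-g_T-infty}, namely $\lim_{T\to\infty}\Lip{g_T}=0$, and the continuity of the exponential at $0$, to conclude that $\e{\Lip{g_T}}-1\to 0$ as $T\to\infty$, so $g_T\to 1$ uniformly on $[0,1]$. I do not expect any genuine obstacle here: the statement is an essentially immediate corollary of the preceding lemma, and the only point that deserves a word of care is the use of the monotonicity of $g_T$ (with $g_T(0)=1$) to remove the positive part and thereby turn the one-sided logarithmic estimate into a clean two-sided bound on $g_T$ valid uniformly across the interval.
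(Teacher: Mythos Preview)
Your proof is correct and follows essentially the same approach as the paper: both apply Lemma~\ref{lem:bornes-ln-Lmax-Lmin} with $y=0$ to obtain $\ln g_T(x)\le\Lip{g_T}$ uniformly on $[0,1]$, and then invoke Assumption~\eqref{eq:hyp-g_T-infty}. You are simply a little more explicit than the paper in spelling out the lower bound $g_T(x)\ge 1$ and in exponentiating to pass from $\ln g_T$ to $g_T-1$.
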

\begin{proof}
  For all $x\in{[0,1]}$, by Lemma~\ref{lem:bornes-ln-Lmax-Lmin},
  \[
    \ln{g_T(x)}=\ln \frac{g_T(x)}{g_T(0)}\le\Lip{g_T}\,({x-0})\le\Lip{g_T}\enspace.
  \]
  Therefore, by Assumption~\eqref{eq:hyp-g_T-infty},
  \[
    \lim_{T\to\infty}\sup_{x\in{[0,1]}}\ln{g_T(x)}\le\lim_{T\to\infty}\Lip{g_T}=0\enspace.\qedhere
  \]
\end{proof}
It follows from Lemma~\ref{lem:M_T-infty} that $\lim_{T\to\infty}M_T(\vec x)=\diag(C\vun)^{-1}C$ uniformly for $\vec x\in\Sigma$.


\subsection{Fixed points and convergence of $\vec u_T$}

When $C$ is irreducible, for all $\vec x\in\Sigma$, the matrix $M_T(\vec x)$ is irreducible, and we can define the map
\[
    \vec u_T\colon \Sigma \to\Sigma \colon \vec x\mapsto \vec u_T(\vec{x})\enspace,
\]
that sends $\vec x$ to the unique invariant measure $\vec u_T(\vec{x})$ of $M_T(\vec{x})$.
The Matrix Tree Theorem enables us to express explicitly $\vec u_T(\vec x)$.
\begin{lemma}\label{lem:expr-u_T}
  Assume that $C$ is irreducible.  Then, the invariant measure of $M_T(\vec{x})$ is given by
  \[\vec u_T(\vec x)=\frac{\vec h_T(\vec x)}{\ip{\vec h_T(\vec x)}{\vun}}\enspace,\] where
  \begin{equation}\label{eq:h_T}
    \vec h_T(\vec x)_r 
    = \Big(\sum_kC_{rk}\,g_T(\vec x_k)\Big)\Big(\sum_{R\in \SpArb{r}}\prod_{(i,j)\in R}C_{ij}\,g_T(\vec x_j)\Big)\enspace.
  \end{equation}
\end{lemma}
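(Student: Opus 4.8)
The plan is to apply Tutte's Matrix Tree Theorem (Theorem~\ref{thm:Tutte}) to the irreducible stochastic matrix $A=M_T(\vec x)$, and then to simplify the resulting sum over spanning arborescences by exploiting the special structure of the entries $M_T(\vec x)_{ij}$, which are ratios whose denominator depends only on the row index $i$.

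First, I would abbreviate the row sums by setting $s_i=\sum_kC_{ik}\,g_T(\vec x_k)$ for each node $i$, so that $M_T(\vec x)_{ij}=C_{ij}\,g_T(\vec x_j)/s_i$; note that $s_i>0$ because $C$ is irreducible (hence has no zero row) and $g_T>0$. By Theorem~\ref{thm:Tutte}, the invariant measure of $M_T(\vec x)$ equals $\vec v/\sum_i\vec v_i$, where $\vec v_r=\sum_{R\in\SpArb{r}}\prod_{(i,j)\in R}M_T(\vec x)_{ij}$. Substituting the expression for $M_T(\vec x)_{ij}$, each summand factors as the numerator $\prod_{(i,j)\in R}C_{ij}\,g_T(\vec x_j)$ divided by $\prod_{(i,j)\in R}s_i$.

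The key step, which does all the work, is the combinatorial observation that for any spanning arborescence $R$ rooted at $r$, each node $i\neq r$ is the tail of exactly one edge of $R$, whereas $r$ is the tail of none; this is precisely the defining property of a spanning arborescence recalled in Section~\ref{sec:preliminaries}. Hence the denominator satisfies $\prod_{(i,j)\in R}s_i=\prod_{i\neq r}s_i$, which is \emph{independent} of the particular arborescence $R$. This lets me pull the denominator out of the sum and rewrite it as a common factor:
\[
  \vec v_r
  =\frac{1}{\prod_{i\neq r}s_i}\sum_{R\in\SpArb{r}}\prod_{(i,j)\in R}C_{ij}\,g_T(\vec x_j)
  =\frac{s_r}{\prod_i s_i}\sum_{R\in\SpArb{r}}\prod_{(i,j)\in R}C_{ij}\,g_T(\vec x_j)
  =\frac{\vec h_T(\vec x)_r}{\prod_i s_i}\enspace,
\]
where the last equality uses $s_r=\sum_kC_{rk}\,g_T(\vec x_k)$ together with the definition~\eqref{eq:h_T} of $\vec h_T$.

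Finally, since $\prod_i s_i$ is a positive constant not depending on $r$, it cancels in the normalization, yielding $\vec u_T(\vec x)_r=\vec v_r/\sum_i\vec v_i=\vec h_T(\vec x)_r/\sum_i\vec h_T(\vec x)_i$, as claimed. I expect no genuine obstacle beyond stating the counting argument cleanly: the only point requiring care is verifying that every non-root node contributes exactly one factor $s_i$ to the product over the edges of $R$, so that the per-arborescence denominators collapse to the same value and can be factored out before normalizing.
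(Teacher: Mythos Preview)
Your proof is correct and follows exactly the paper's approach: the paper's one-line proof simply says to apply Theorem~\ref{thm:Tutte} and take $\vec h_T(\vec x)=\mu\,\vec v$ with $\mu=\prod_i\sum_kC_{ik}g_T(\vec x_k)$, which is precisely your $\prod_i s_i$. You have merely spelled out the combinatorial reason why $\mu\,\vec v_r=\vec h_T(\vec x)_r$, namely that every non-root node is the tail of exactly one edge in a spanning arborescence, so the row-denominators factor out uniformly.
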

\begin{proof}
  Apply Theorem~\ref{thm:Tutte} to $M_T(\vec x)$, and take $\vec h_T(\vec x)=\mu\,\vec v$, 
  where $\vec v$ is given by~\eqref{eq:mtt} and $\mu=\prod_i\sum_kC_{ik}g_T(\vec x_k)$.
\end{proof}

The existence of fixed points for $\vec u_T$ is then proved using Brouwer's Fixed Point Theorem.
\begin{proposition}\label{prop:u_tau-fxpt-exist}
    Assume that $C$ is irreducible.  The map $\vec u_T$ has at least one fixed point
    in $\mathrm{int}(\Sigma)$.  Moreover, every fixed point of $\vec u_T$ is in $\mathrm{int}(\Sigma)$.
\end{proposition}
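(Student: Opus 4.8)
The plan is to apply Brouwer's Fixed Point Theorem to the map $\vec u_T$ on the simplex $\Sigma$ to obtain existence of a fixed point, and to observe separately that the \emph{image} of $\vec u_T$ is contained in $\mathrm{int}(\Sigma)$, which settles both assertions at once. For Brouwer I need $\vec u_T$ to be a continuous self-map of the nonempty compact convex set $\Sigma$, so continuity is the only nontrivial hypothesis to check.

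First I would verify continuity using the explicit formula of Lemma~\ref{lem:expr-u_T}. Each coordinate $\vec h_T(\vec x)_r$ is a finite sum of products of values of the continuous function $g_T$ at the coordinates $\vec x_k\in[0,1]$ scaled by the constants $C_{ij}$, hence $\vec h_T$ is continuous on $\Sigma$; it remains to check that the normalizing factor $\sum_r\vec h_T(\vec x)_r$ never vanishes. This is where irreducibility enters. Since $C$ is irreducible, $\Gr{C}$ is strongly connected, so the set $\SpArb{r}$ of spanning arborescences rooted at $r$ is nonempty for every $r$, and each row of $C$ carries a positive entry. Because $g_T>0$ on $[0,1]$, every product $\prod_{(i,j)\in R}C_{ij}\,g_T(\vec x_j)$ with $R\in\SpArb{r}$ is strictly positive, and so is $\sum_kC_{rk}\,g_T(\vec x_k)$; therefore $\vec h_T(\vec x)_r>0$ for every $r$ and every $\vec x\in\Sigma$. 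Consequently the denominator $\sum_r\vec h_T(\vec x)_r$ is strictly positive on $\Sigma$, so $\vec u_T=\vec h_T/\sum_r\vec h_T(\vec x)_r$ is well defined, continuous, and maps $\Sigma$ into $\Sigma$.

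Brouwer's Fixed Point Theorem then yields a fixed point $\vec x^*=\vec u_T(\vec x^*)\in\Sigma$. Finally, the strict positivity established above shows that $\vec u_T(\vec x)_r>0$ for every $r$, i.e.\ $\vec u_T(\vec x)\in\mathrm{int}(\Sigma)$ for \emph{every} $\vec x\in\Sigma$ (this is just the Perron--Frobenius fact that the invariant measure of the irreducible matrix $M_T(\vec x)$ is positive). Hence the Brouwer fixed point lies in $\mathrm{int}(\Sigma)$, and likewise any fixed point $\vec x=\vec u_T(\vec x)$ lies in $\mathrm{int}(\Sigma)$, giving both claims. The only real obstacle is the nonvanishing of the denominator, which rests entirely on the nonemptiness of $\SpArb{r}$ for each $r$ guaranteed by the strong connectivity of $\Gr{C}$; everything else is routine.
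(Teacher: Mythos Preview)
Your proof is correct and follows essentially the same approach as the paper: continuity of $\vec u_T$ via the explicit formula of Lemma~\ref{lem:expr-u_T}, Brouwer's Fixed Point Theorem on $\Sigma$, and the observation that $\vec u_T(\Sigma)\subset\mathrm{int}(\Sigma)$ by irreducibility. The paper's version is terser (it simply invokes Perron--Frobenius for the positivity of the invariant measure rather than arguing directly from $\vec h_T$), but the logic is identical.
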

\begin{proof}
    By Lemma~\ref{lem:expr-u_T}, the map $\vec u_T\colon \Sigma \to\Sigma$ is continuous, and
    therefore Brouwer's Fixed Point Theorem ensures the existence of at least one fixed point for $\vec u_T$.
    Moreover, since the invariant measure of an irreducible matrix is positive, 
    and since $M_T(\vec x)$ is irreducible,
    $\vec u_T$ sends $\Sigma$ to $\mathrm{int}(\Sigma)$, and therefore every fixed point of $\vec u_T$ is in $\mathrm{int}(\Sigma)$.
\end{proof}

The following result concerns the uniqueness of the fixed point and the convergence of the orbits of $\vec u_T$.  With Assumption~\eqref{eq:hyp-g_T-infty} satisfied, it shows that the map $\vec u_T$ has a unique fixed point and that all its orbits converge to this fixed point, for a sufficiently large temperature $T$.  It can be proved using Nussbaum's Theorem~\ref{thm:Nussbaum}, as we do it here, or, under the same hypotheses, using Krause's Theorem~\ref{thm:Krause} (take $p(\vec x)=\sum_i\vec x_i$).
\begin{theorem}\label{thm:u_tau-fxpt-conv}
    Assume that $C$ is irreducible.  If $n\Lip{g_T}\le 1$, the map $\vec u_T$ has a unique fixed point $\vec{x}_T$, which belongs to $\mathrm{int}(\Sigma)$.
    Moreover all the orbits of $\vec u_T$ converge to this fixed point.
\end{theorem}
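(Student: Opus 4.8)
The plan is to apply Nussbaum's Theorem~\ref{thm:Nussbaum} to the map $\vec h_T$ of Lemma~\ref{lem:expr-u_T}, since $\vec u_T=\vec h_T/\sum_i(\vec h_T)_i$ is precisely the normalized map to which that theorem refers. The governing structural remark is that, on setting $y_k=g_T(\vec x_k)$, formula~\eqref{eq:h_T} writes each coordinate $\vec h_T(\vec x)_r$ as a polynomial in $y=(y_1,\dots,y_n)$ whose coefficients (products of entries $C_{ij}$) are nonnegative, and in which every monomial carries exactly $n$ factors $g_T(\vec x_{\cdot})$: one from $\sum_kC_{rk}g_T(\vec x_k)$ and $n-1$ from the $n-1$ edges of an arborescence $R\in\SpArb{r}$. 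From this, continuity and $C^1$-differentiability of $\vec h_T$ are inherited from $g_T$ (so the differentiability requirement of Theorem~\ref{thm:Nussbaum} is automatic, not merely local), while monotonicity of $g_T$ together with nonnegativity of the coefficients makes $\vec h_T$ order-preserving. To meet the hypothesis $\vec h\colon\R^n_{>0}\to\R^n_{>0}$ literally, I would first extend $g_T$ to a $C^1$ increasing function on $\R_{>0}$; this changes nothing, since the orbits of $\vec u_T$ remain in $\Sigma$ by Proposition~\ref{prop:u_tau-fxpt-exist} and the only other points used are their downscalings, all lying in $[0,1]^n$.

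The heart of the argument, and the only place where $n\Lip{g_T}\le1$ enters, is subhomogeneity on $\mathrm{int}(\Sigma)$. Fix $r$, write $m_{k,R}(\vec x)=C_{rk}g_T(\vec x_k)\prod_{(i,j)\in R}C_{ij}g_T(\vec x_j)$ for $k$ and $R\in\SpArb{r}$, so that $\vec h_T(\vec x)_r=\sum_{k,R}m_{k,R}(\vec x)$. For $\vec x\in\mathrm{int}(\Sigma)$ and $\lambda\in{]0,1]}$ the first inequality of Lemma~\ref{lem:bornes-ln-Lmax-Lmin} gives
\[
  \ln\frac{m_{k,R}(\vec x)}{m_{k,R}(\lambda\vec x)}
  =\ln\frac{g_T(\vec x_k)}{g_T(\lambda\vec x_k)}+\sum_{(i,j)\in R}\ln\frac{g_T(\vec x_j)}{g_T(\lambda\vec x_j)}
  \le\Lip{g_T}(1-\lambda)\Big(\vec x_k+\sum_{(i,j)\in R}\vec x_j\Big)
  \le n\,\Lip{g_T}(1-\lambda)\enspace,
\]
the last step using that the bracket is a sum of $n$ coordinates, each at most $1$. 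Hence $m_{k,R}(\lambda\vec x)\ge e^{-n\Lip{g_T}(1-\lambda)}m_{k,R}(\vec x)$, and since $n\Lip{g_T}\le1$ and $\ln\lambda\le\lambda-1$ we have $e^{-n\Lip{g_T}(1-\lambda)}\ge e^{-(1-\lambda)}\ge\lambda$. Summing over $k$ and $R$ yields $\vec h_T(\lambda\vec x)_r\ge\lambda\,\vec h_T(\vec x)_r$ for all $r$, which is exactly subhomogeneity.

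Finally I would examine the Jacobian at a point $\vec x\in\mathrm{int}(\Sigma)$. By the chain rule $\partial\vec h_T(\vec x)_r/\partial\vec x_j=(\partial\vec h_T(\vec x)_r/\partial y_j)\,g_T'(\vec x_j)$, with $g_T'>0$ and $\partial\vec h_T(\vec x)_r/\partial y_j\ge0$, so $\vec h_T'(\vec x)$ is nonnegative; and whenever $C_{rj}>0$ the variable $y_j$ appears in the (strictly positive) first factor, so $\partial\vec h_T(\vec x)_r/\partial y_j>0$ and the graph of $\vec h_T'(\vec x)$ contains $\Gr{C}$, which is strongly connected, giving irreducibility. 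For primitivity I would show the diagonal is strictly positive: for each $r$, irreducibility of $C$ furnishes an in-neighbour $i_0$ of $r$, and growing a spanning in-tree outward from $\{r,i_0\}$ (always possible in a strongly connected graph) produces an arborescence $R\in\SpArb{r}$ that contains the edge $(i_0,r)$, so $y_r$ occurs in $\vec h_T(\vec x)_r$ and $\partial\vec h_T(\vec x)_r/\partial\vec x_r>0$; a nonnegative irreducible matrix with positive diagonal is primitive. Theorem~\ref{thm:Nussbaum} then yields at most one eigenvector of $\vec h_T$ in $\mathrm{int}(\Sigma)$, hence a unique fixed point $\vec x_T$ of $\vec u_T$ once its existence is invoked from Proposition~\ref{prop:u_tau-fxpt-exist}, and primitivity of $\vec h_T'(\vec x_T)$ upgrades this to convergence of every orbit of $\vec u_T$ to $\vec x_T$; the same verifications feed, alternatively, Krause's Theorem~\ref{thm:Krause} with $p(\vec x)=\sum_i\vec x_i$. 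The main obstacles are the two combinatorial points that make the whole scheme work: the count of exactly $n$ factors per monomial, which pins the subhomogeneity threshold precisely at $n\Lip{g_T}\le1$, and the arborescence through an incoming edge $(i_0,r)$, which forces a positive diagonal and hence primitivity --- this last being what secures global convergence under mere irreducibility of $C$, without the primitivity of $C$ needed later for the iteration~\eqref{eq:iter-fT}.
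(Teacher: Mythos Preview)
Your proof is correct and follows essentially the same approach as the paper: apply Nussbaum's Theorem~\ref{thm:Nussbaum} to $\vec h_T$, checking order-preservation, subhomogeneity from the $n$-factor structure of the monomials, and primitivity of the Jacobian via irreducibility plus a positive diagonal. The only cosmetic differences are that the paper uses the logarithmic bound $\ln(g_T(\vec x_k)/g_T(\lambda\vec x_k))\le\Lip{g_T}\ln(1/\lambda)$ from the second part of Lemma~\ref{lem:bornes-ln-Lmax-Lmin} (yielding $\lambda^{1/n}g_T(\vec x_k)\le g_T(\lambda\vec x_k)$ directly), and observes more simply that \emph{every} $R\in\SpArb{r}$ already contains an edge into $r$, so the diagonal of $\vec h_T'(\vec x)$ is positive without constructing a specific arborescence.
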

\begin{proof}
    Since $g_T$ is increasing, $\vec h_T$ is an order-preserving map from $\R^n_{>0}$ to itself: 
    $\vec x\le\vec y$ implies $\vec h_T(\vec x)\le\vec h_T(\vec y)$.
    Now, let us show that $\vec h_T$ is \emph{subhomogeneous} on $\mathrm{int}(\Sigma)$.
    Let $\vec x\in\mathrm{int}(\Sigma)$ and $0<\lambda\le1$.
    Any entry of $\vec h_T(\vec x)$ is a sum of positively weighted terms like
    \[
      \prod_{k}g_T(\vec x_k)^{\gamma_k}\enspace,
    \]
    with $\sum_k \gamma_k=n$.
    By Lemma~\ref{lem:bornes-ln-Lmax-Lmin}, for each $k\in\set{1,\dots,n}$,
    \[
      \ln\frac{g_T(\vec x_k)}{g_T(\lambda\vec x_k)}\le \Lip{g_T}\ln\frac{1}{\lambda}\enspace.
    \]
    Therefore, if $n\Lip{g_T}\le1$, then $\lambda^{1/n}g_T(\vec x_k)\le g_T(\lambda\vec x_k)$,
    and it follows that $\lambda\vec h_T(\vec x)\le\vec h_T(\lambda\vec x)$.
    Since $0\le\vec h_T(0)$, this shows that $\vec h_T$ is subhomogeneous on $\mathrm{int}(\Sigma)$.

    Finally, let $\vec x_T$ be a fixed point of $\vec u_T$, by Proposition~\ref{prop:u_tau-fxpt-exist}.
    The derivative $\vec h_T'(\vec x)$ is a nonnegative continuous function of $\vec x$:
    \begin{align*}
      \frac{\partial \vec h_T(\vec x)_r}{\partial \vec x_\ell}
      &= C_{r\ell}\,g_T'(\vec x_\ell)\Big(\sum_{R\in \SpArb{r}}\prod_{(i,j)\in R} C_{ij}\,g_T(\vec x_j)\Big)\\
        &+\Big(\sum_kC_{rk}\,g_T(\vec x_k)\Big)
        \Big(\sum_{R\in \SpArb{r}} m_{\ell,R}\,\frac{g_T'(\vec x_\ell)}{g_T(\vec x_\ell)}
        \prod_{(i,j)\in R} C_{ij}\,g_T(\vec x_j)\Big)\enspace,
    \end{align*}
    where $m_{\ell,R}=\abs{\set{i\colon(i,\ell)\in R}}$.  Moreover, since $g_T'$ takes positive values,
    ${\partial \vec h_T(\vec x)_r}/{\partial \vec x_\ell}>0$ 
    as soon as $C_{r\ell}>0$ or $m_{\ell,R}>0$ for some $R\in \SpArb{r}$.  In particular, $m_{r,R}>0$ for all $R\in \SpArb{r}$.
    Since an irreducible matrix with positive diagonal is primitive, $\vec h_T'(\vec x)$ is also primitive
    (see for instance Corollary~$2.2.28$ in~\cite{BP94}).
    Therefore, by Theorem~\ref{thm:Nussbaum},
    $\vec x_T$ is the unique fixed point of $\vec u_T$, and all the orbits of $\vec u_T$ converges to $\vec x_T$.
\end{proof}


\subsection{Fixed points and convergence of $\vec f_T$}

We now consider the map
\[
    \vec f_T\colon \Sigma\to\Sigma \colon \vec x\mapsto \vec x M_T(\vec x)\enspace.
\]
The following result shows that if $T$ is sufficiently large, the fixed point
of the map $\vec u_T$ can be computed by iterating $\vec f_T$.

\begin{theorem}\label{thm:f_tau-fxpt-conv}
    The fixed points of $\vec u_T$ and $\vec f_T$ are the same.
    If $C$ is primitive, then, for $T$ sufficiently large,
    all the orbits of $\vec f_T$ converge
    to the fixed point $\vec x_T$ of $\vec u_T$.
\end{theorem}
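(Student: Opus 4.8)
The plan is to treat the two assertions separately, and to handle the equality of fixed points first, since it needs no assumption on the size of $T$. If $\vec x$ is a fixed point of $\vec u_T$, then $\vec x=\vec u_T(\vec x)$ is by definition the invariant measure of $M_T(\vec x)$, so $\vec x M_T(\vec x)=\vec x$, which means $\vec f_T(\vec x)=\vec x$. Conversely, if $\vec f_T(\vec x)=\vec x$ then $\vec x M_T(\vec x)=\vec x$, so $\vec x$ is an invariant measure of $M_T(\vec x)$; since $C$ is irreducible the matrix $M_T(\vec x)$ is irreducible and its invariant measure is unique, whence $\vec x=\vec u_T(\vec x)$. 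So the two fixed-point sets coincide.

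For the convergence statement, the key idea is to show that for $T$ large enough $\vec f_T$ is a contraction on $\Sigma$ for a suitable norm, and then to invoke the Banach fixed point theorem; the resulting limit is the unique fixed point of $\vec f_T$, which by the first part equals $\vec x_T$. The norm must be chosen with care, because the matrices $M_T(\vec x)$ share the zero pattern of $C$ and are therefore only primitive, not positive, so a single application of $M_T(\vec x)$ need not contract in a naive norm such as the $\ell_1$ norm. To get around this, I would introduce the limit matrix $M:=\diag(C\vun)^{-1}C$, which is primitive because $C$ is, and to which every $M_T(\vec x)$ converges uniformly on $\Sigma$ as $T\to\infty$ by the remark following Lemma~\ref{lem:M_T-infty}. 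The tangent space to $\Sigma$ is $V_0:=\set{\vec h\in\R^n\colon\vec h\vun=0}$, and since $M\vun=\vun$ the linear map $\vec h\mapsto\vec h M$ leaves $V_0$ invariant. Its eigenvalues on $V_0$ are exactly the eigenvalues of $M$ other than the Perron eigenvalue $1$, hence all of modulus strictly less than $1$ by primitivity, so the spectral radius of this restricted map is some $\sigma<1$. Using the classical fact that the spectral radius is the infimum of the induced operator norms, I would fix a norm $\norm{\cdot}$ on $V_0$ for which $\vec h\mapsto\vec h M$ has operator norm at most $\rho$, where $\sigma<\rho<1$.

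I would then estimate the increment along the splitting $\vec f_T(\vec x)-\vec f_T(\vec y)=\vec x\bigl(M_T(\vec x)-M_T(\vec y)\bigr)+(\vec x-\vec y)M_T(\vec y)$, observing that both summands lie in $V_0$ since the rows of any difference of two stochastic matrices sum to zero. For the second summand I would write $(\vec x-\vec y)M_T(\vec y)=(\vec x-\vec y)M+(\vec x-\vec y)\bigl(M_T(\vec y)-M\bigr)$: the first piece has norm at most $\rho\norm{\vec x-\vec y}$, while the second is bounded by the operator norm of $M_T(\vec y)-M$, which tends to $0$ uniformly in $\vec y$ as $T\to\infty$. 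For the first summand, differentiating $M_T$ gives ${\partial M_T(\vec x)_{ij}}/{\partial\vec x_\ell}=\bigl(g_T'(\vec x_\ell)/g_T(\vec x_\ell)\bigr)M_T(\vec x)_{ij}(\delta_{j\ell}-M_T(\vec x)_{i\ell})$, so the map $\vec x\mapsto M_T(\vec x)$ is Lipschitz with a constant of order $\Lip{g_T}$; since $\norm{\vec x}$ is bounded on $\Sigma$, this summand is bounded by $O(\Lip{g_T})\norm{\vec x-\vec y}$. Collecting the contributions, the one-step contraction factor is $\rho+o(1)+O(\Lip{g_T})$, which is strictly below $1$ for $T$ large by Assumption~\eqref{eq:hyp-g_T-infty}. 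Banach's theorem then yields global convergence to the unique fixed point of $\vec f_T$, which is $\vec x_T$.

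I expect the main obstacle to be precisely the choice of norm. Because $M$ is only primitive, no fixed a priori norm makes $\vec f_T$ contract in a single step, and it is the adapted norm furnished by the spectral-radius characterisation that makes the one-step argument succeed; iterating a fixed stochastic matrix would otherwise be needed. A secondary technical point is to make the two perturbation estimates uniform over $\vec x,\vec y\in\Sigma$ (the $\vec x$-dependence of $M_T$, controlled by $\Lip{g_T}$, and the distance $M_T-M$, controlled by Lemma~\ref{lem:M_T-infty}) so that a single threshold on $T$ governs every orbit at once.
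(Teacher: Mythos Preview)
Your proposal is correct and follows essentially the same approach as the paper: both arguments construct an adapted norm on the tangent hyperplane $V_0=\mathcal{S}$ using the primitivity of the limit matrix $M=\diag(C\vun)^{-1}C$ (so that its restriction to $\mathcal{S}$ has spectral radius $<1$), and then show that for large $T$ the map $\vec f_T$ is a strict contraction on $\Sigma$ in this norm, whence Banach's theorem gives global convergence to the unique fixed point. The only cosmetic difference is that the paper packages your two contributions into the single Jacobian $\vec f_T'(\vec v)=M_T(\vec v)+(\text{term of order }\Lip{g_T})$, shows it converges uniformly to $M$, and applies the mean value inequality once, whereas you split the increment $\vec f_T(\vec x)-\vec f_T(\vec y)$ algebraically first and bound each piece separately---the substance and the estimates are identical.
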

\begin{proof}
    Clearly, $\vec f_T$ and $\vec u_T$ have the same fixed points.
    Suppose now that $C$ is primitive, and let us show then that, 
    for $T$ is sufficiently large, $\vec f_T$ is a contraction for some particular norm.
    For every $\vec x,\vec y\in\Sigma$ and for any norm $\norm{\cdot}$,
    \[
        \norm{\vec f_T(\vec x)-\vec f_T(\vec y)}\le\sup_{\vec v\in\Sigma}\norm{(\vec x-\vec y)\,\vec f'_T(\vec v)}\enspace.
    \]
    The derivative of $\vec f_T$ satisfies
    \[
      \frac{\partial \vec f_T(\vec v)_j}{\partial \vec v_\ell}
      = M_T(\vec v)_{\ell j} 
      + \sum_i\big(\delta_{\ell j}-M_T(\vec v)_{ij}\big)\frac{C_{i\ell}\,\vec v_i\,g_T'(\vec v_\ell)}{\sum_kC_{ik}\,g_T(\vec v_k)} \enspace.
    \]
    By Assumption~\eqref{eq:hyp-g_T-infty} and Lemma~\ref{lem:M_T-infty},
    \[
      \lim_{T\to\infty} \vec f_T'(\vec v)=\lim_{T\to\infty}M_T(\vec v)=\diag(C\vun)^{-1}C\enspace,
    \]
    uniformly for $\vec v\in\Sigma$.  Let $A=\diag(C\vun)^{-1}C$, and 
    let $\mathcal{S}=\set{\vec z\in\R^n\colon\ip{\vec z}{\vun}=0}$ be the space of vectors orthogonal to the vector $\vun$.
    The map $\vec x\mapsto \vec xA$ preserves the space $\mathcal{S}$, because $A\vun=\vun$. 
    Let $A_{\mathcal{S}}$ denote the restriction of the map $\vec x\mapsto \vec xA$ to $\mathcal{S}$.
    Since the matrix $A$ is primitive, its spectral radius, $\rho(A)$, is a simple eigenvalue 
    and all the other eigenvalues of $A$ have a strictly smaller modulus. 
    Moreover, the space $\mathcal{S}$ contains no eigenvector of $A$ for the eigenvalue $\rho(A)$, 
    because such an eigenvector $\vec{u}$ must be a scalar multiple of the Perron eigenvector of $A$, 
    which has positive entries, contradicting $\ip{\vec u}{\vun} =0$.
    We deduce that $\rho(A_{\mathcal S})<\rho(A)=1$. 
    It follows that there exists a norm $\norm{\cdot}$ such that $\indnorm{A_{\mathcal S}}<1$,
    where $\indnorm{\cdot}$ is the matrix norm induced by $\norm{\cdot}$ (see for instance Lemma~$5.6.10$ in~\cite{HoJo}).
    Therefore, since $\vec f'_T(\vec v)$ tends uniformly to $A$ for $\vec v\in\Sigma$ when $T$ tends to $\infty$,
    \[
      \lim_{T\to\infty}\sup_{\vec v\in\Sigma}\indnorm{\vec f_T'(\vec v)_\mathcal{S}}=\indnorm{A_\mathcal{S}}<1\enspace,
    \]   
    where $\vec f'_T(\vec v)_\mathcal{S}$ denotes the restriction of $\vec f'_T(\vec v)$ on $\mathcal{S}$.
    It follows that, for all $\alpha\in{]\indnorm{A_\mathcal{S}},1[}$, there exists $T_\alpha$ such that for all $T>T_\alpha$,
    \[
        \norm{\vec f_T(\vec x)-\vec f_T(\vec y)}
        \le\norm{\vec x-\vec y}\,\sup_{\vec v\in\Sigma}\indnorm{\vec f'_T(\vec v)_\mathcal{S}}\le\alpha\norm{\vec x-\vec y}\enspace.
    \]
    Hence, for such temperature $T$, by Banach's Fixed Point Theorem,
    $\vec f_T$ has a unique fixed point and every orbit of $\vec f_T$ converges to this fixed point.
\end{proof}

\paragraph{Example}
    For the convergence of the orbits of $\vec f_T$, the primitivity of $C$ cannot be dispensed with.
    Let for instance
    $C=\left(\begin{smallmatrix}0&1\\1&0\end{smallmatrix}\right)$
    and $T>0$.
    Then $M_T(\vec x)=C$ for all $\vec x\in\Sigma$, and $\vec f_T^k(\vec x)$ oscillates when $k$ tends to infinity,
    unless $\vec x=\left(\frac{1}{2}\,\,\frac{1}{2}\right)$.

For positive matrices $C\in\R^{n\times n}_{>0}$, let us now derive another convergence criterion,
depending on Birkhoff's coefficient of ergodicity.
\begin{lemma}\label{lem:d_H(M)-d_H(x)}
    Assume that $C$ is positive.  Then, for any $\vec x,\vec y\in\mathrm{int}(\Sigma)$,
    \[
        d_H(M_T(\vec x),M_T(\vec y))\le 2\Lip{g_T}\,d_H(\vec x,\vec y)\enspace.
    \]
\end{lemma}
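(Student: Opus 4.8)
The plan is to bound the \emph{induced} projective distance pointwise: I would fix an arbitrary $\vec z\in\R^n_{>0}$, estimate $d_H(\vec zM_T(\vec x),\vec zM_T(\vec y))$, and take the supremum over $\vec z$ only at the very end. The organizing idea is to factor
\[
M_T(\vec x)=D(\vec x)^{-1}\,C\,G(\vec x),\qquad G(\vec x)=\diag\big(g_T(\vec x_1),\dots,g_T(\vec x_n)\big),\quad D(\vec x)=\diag\big(C\,\vec g(\vec x)\big),
\]
where $\vec g(\vec x)$ is the column vector with entries $g_T(\vec x_k)$. Here $G(\vec x)$ records the dependence of the \emph{numerators} on $\vec x$, while $D(\vec x)$ records the dependence of the \emph{row-normalizing denominators}; this exhibits the two, and only two, places where $\vec x$ enters $M_T(\vec x)$, and it is these two occurrences that will produce the factor $2$.

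The two facts about $d_H$ I would lean on are: (i) right multiplication by a positive diagonal matrix $G$ is an isometry, since $(\vec aG)_j/(\vec bG)_j=\vec a_j/\vec b_j$; and (ii) any positive linear map never increases $d_H$, so both $\vec a\mapsto\vec aC$ and $\vec a\mapsto C\vec a$ are non-expansive, which in terms of the ergodicity coefficient is $\tauB(C)\le1$ and $\tauB(C^\top)\le1$. Writing $\vec zM_T(\vec x)=\big(\vec zD(\vec x)^{-1}C\big)G(\vec x)$ and inserting the intermediate vector $\big(\vec zD(\vec y)^{-1}C\big)G(\vec x)$, the triangle inequality splits the distance into a numerator term and a denominator term. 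For the numerator term, fact~(i) applied to $G(\cdot)$ collapses it \emph{exactly} to $d_H(\vec g(\vec x),\vec g(\vec y))$. For the denominator term, fact~(i) removes the common $G(\vec x)$ and fact~(ii) removes the common right factor $C$; what remains compares $\vec zD(\vec x)^{-1}$ and $\vec zD(\vec y)^{-1}$, whose $i$-th coordinate ratio is $(C\vec g(\vec y))_i/(C\vec g(\vec x))_i$, so this term equals $d_H(C\vec g(\vec x),C\vec g(\vec y))$, which fact~(ii) bounds by $d_H(\vec g(\vec x),\vec g(\vec y))$. Hence $d_H(\vec zM_T(\vec x),\vec zM_T(\vec y))\le 2\,d_H(\vec g(\vec x),\vec g(\vec y))$ for every $\vec z$.

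It then remains to establish the seemingly one-dimensional inequality $d_H(\vec g(\vec x),\vec g(\vec y))\le\Lip{g_T}\,d_H(\vec x,\vec y)$, and this is the step I expect to be the main obstacle: the naive pairwise estimate fails for a general increasing Lipschitz map, and the assumption $\vec x,\vec y\in\mathrm{int}(\Sigma)$ must genuinely be used. Writing $P_j=\ln\big(g_T(\vec x_j)/g_T(\vec y_j)\big)$, we have $d_H(\vec g(\vec x),\vec g(\vec y))=\max_jP_j-\min_{j'}P_{j'}=P_p-P_q$ for suitable indices $p,q$. I would then bound $P_p-P_q\le\posp{P_p}+\posp{-P_q}$ and apply the ``moreover'' part of Lemma~\ref{lem:bornes-ln-Lmax-Lmin} to each positive part, obtaining
\[
P_p-P_q\le\Lip{g_T}\left(\bposp{\ln\frac{\vec x_p}{\vec y_p}}+\bposp{\ln\frac{\vec y_q}{\vec x_q}}\right)\le\Lip{g_T}\left(\max_j\ln\frac{\vec x_j}{\vec y_j}-\min_{j'}\ln\frac{\vec x_{j'}}{\vec y_{j'}}\right)=\Lip{g_T}\,d_H(\vec x,\vec y).
\]
The last inequality is exactly where the simplex enters: since $\sum_i\vec x_i=\sum_i\vec y_i=1$, neither vector dominates the other, so $\max_j\ln(\vec x_j/\vec y_j)\ge0\ge\min_{j'}\ln(\vec x_{j'}/\vec y_{j'})$, and this is what lets the two positive parts recombine into the single quantity $d_H(\vec x,\vec y)$ rather than twice it. Combining with the previous paragraph and taking the supremum over $\vec z\in\R^n_{>0}$ yields $d_H(M_T(\vec x),M_T(\vec y))\le 2\Lip{g_T}\,d_H(\vec x,\vec y)$, as claimed.
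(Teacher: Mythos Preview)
Your proof is correct, and the final step---using Lemma~\ref{lem:bornes-ln-Lmax-Lmin} together with the observation that for $\vec x,\vec y\in\mathrm{int}(\Sigma)$ one has $\max_j\ln(\vec x_j/\vec y_j)\ge 0\ge\min_j\ln(\vec x_j/\vec y_j)$---is exactly how the paper exploits the simplex hypothesis. The organization, however, is genuinely different from the paper's.

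The paper proceeds more directly: it sets $\alpha=\max_{i,j}M_T(\vec x)_{ij}/M_T(\vec y)_{ij}$ and $\beta=\min_{i,j}M_T(\vec x)_{ij}/M_T(\vec y)_{ij}$, observes the general bound $d_H(M_T(\vec x),M_T(\vec y))\le\ln(\alpha/\beta)$, and then computes $\ln\alpha$ and $-\ln\beta$ explicitly from the formula for $M_T$. Each of $\ln\alpha$ and $-\ln\beta$ is bounded by $\Lip{g_T}\,d_H(\vec x,\vec y)$, and the factor $2$ arises from adding them. Your route instead factors $M_T(\vec x)=D(\vec x)^{-1}CG(\vec x)$, splits via the triangle inequality into a ``numerator'' and a ``denominator'' contribution, and reduces each to $d_H(\vec g(\vec x),\vec g(\vec y))$ using the diagonal-isometry and Birkhoff-nonexpansiveness properties of $d_H$; the factor $2$ arises from the two terms in the triangle inequality. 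Your approach is more structural and makes transparent \emph{why} there are exactly two contributions (the two occurrences of $\vec x$ in $M_T(\vec x)$), and it would adapt cleanly to other factorizations of this type; the paper's approach is more elementary in that it needs no abstract properties of $d_H$ beyond the definition, just an explicit entrywise estimate.
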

\begin{proof}
    Let $\vec x,\vec y\in\mathrm{int}(\Sigma)$ be fixed.  
    Let us define $\alpha=\max_{i,j}\frac{M_T(\vec x)_{ij}}{M_T(\vec y)_{ij}}$
    and $\beta=\min_{i,j}\frac{M_T(\vec x)_{ij}}{M_T(\vec y)_{ij}}$.
    By definition,
    \begin{align*}
        d_H(M_T(\vec x),M_T(\vec y))
        &=\sup_{\vec z\in\R^n_{>0}}\max_{i,j}
          \ln\frac{(\vec zM_T(\vec x))_i}{(\vec zM_T(\vec y))_i}\frac{(\vec zM_T(\vec y))_j}{(\vec zM_T(\vec x))_j}\\
        &\le\sup_{\vec z\in\R^n_{>0}}\max_{i,j}\ln
        \frac{(\alpha \vec zM_T(\vec y))_i}{(\vec zM_T(\vec y))_i}
          \frac{(\vec zM_T(\vec y))_j}{(\beta \vec zM_T(\vec y))_j}
        =\ln\frac{\alpha}{\beta}\enspace.
    \end{align*}
    Moreover, by Lemma~\ref{lem:bornes-ln-Lmax-Lmin},
    \begin{align*}
        \ln\alpha
        &=\max_{i,j}\ln\bigg(\frac{C_{ij}\,g_T(\vec x_j)}{\sum_k C_{ik}\,g_T(\vec x_k)}
             \frac{\sum_k C_{ik}\,g_T(\vec y_k)}{C_{ij}\,g_T(\vec y_j)}\bigg)\\
        &\le\max_{j,k}\left(\ln\frac{g_T(\vec x_j)}{g_T(\vec y_j)}+\ln\frac{g_T(\vec y_k)}{g_T(\vec x_k)}\right)\\
        &\le\Lip{g_T}\left(\max_{j}\bposp{\ln\frac{\vec x_j}{\vec y_j}}+\max_{k}\bposp{\ln\frac{\vec y_k}{\vec x_k}}\right)\\
        &=\Lip{g_T}\left(\bposp{\max_{j}\ln\frac{\vec x_j}{\vec y_j}}+\bposp{\max_{k}\ln\frac{\vec y_k}{\vec x_k}}\right)\\
        &=\Lip{g_T}\,d_H(\vec x,\vec y)\enspace,
    \end{align*}
    since $\vec x,\vec y\in\mathrm{int}(\Sigma)$ implies $\max_{j}\ln\frac{\vec x_j}{\vec y_j}\ge0$ and $\max_{k}\ln\frac{\vec y_k}{\vec x_k}\ge0$.
    We get similarly $-\ln\beta\le\Lip{g_T}\,d_H(\vec x,\vec y)$.
\end{proof}
\begin{proposition}\label{prop:Cpos-f_Tcontr}
    Assume that $C$ is positive.
    If $2\Lip{g_T}<{1-\tauB(C)}$,
    then $\vec f_T$ has a unique fixed point $\vec x_T\in\mathrm{int}(\Sigma)$ and
    all the orbits of $\vec f_T$ converge to this fixed point.
\end{proposition}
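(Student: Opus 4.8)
The plan is to prove that, under the hypothesis $2\Lip{g_T} < 1 - \tauB(C)$, the map $\vec f_T$ is a strict contraction of the metric space $(\mathrm{int}(\Sigma), d_H)$, and then to invoke Banach's Fixed Point Theorem, exactly as in the proof of Theorem~\ref{thm:f_tau-fxpt-conv} but with Hilbert's projective metric in place of a vector norm. Since $C$ is positive and $g_T$ takes positive values, $M_T(\vec x)$ is a positive matrix for every $\vec x\in\Sigma$; consequently $\vec f_T(\vec x)=\vec x M_T(\vec x)$ lies in $\mathrm{int}(\Sigma)$ for every $\vec x\in\Sigma$. Thus $\vec f_T$ maps $\mathrm{int}(\Sigma)$ into itself, every orbit enters $\mathrm{int}(\Sigma)$ after one step, and it suffices to work on $\mathrm{int}(\Sigma)$.

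The core estimate would come from the triangle inequality with a well-chosen intermediate point. For $\vec x,\vec y\in\mathrm{int}(\Sigma)$ the vector $\vec y M_T(\vec x)$ is again in $\mathrm{int}(\Sigma)$, so
\[
  d_H(\vec f_T(\vec x),\vec f_T(\vec y))
  \le d_H(\vec x M_T(\vec x),\vec y M_T(\vec x))
     + d_H(\vec y M_T(\vec x),\vec y M_T(\vec y))\enspace.
\]
For the first term both arguments are obtained by applying the \emph{same} matrix $M_T(\vec x)$, so by the very definition of Birkhoff's coefficient of ergodicity it is bounded by $\tauB(M_T(\vec x))\,d_H(\vec x,\vec y)$. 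The key observation is that $\tauB(M_T(\vec x))=\tauB(C)$: writing $M_T(\vec x)=D_1\,C\,D_2$ with $D_2=\diag(g_T(\vec x_1),\dots,g_T(\vec x_n))$ and $D_1$ the diagonal matrix whose $i$-th entry is $1/\sum_k C_{ik}\,g_T(\vec x_k)$, the matrix $M_T(\vec x)$ is obtained from $C$ by multiplying on the left and on the right by positive diagonal matrices, and Hilbert's projective metric---hence $\tauB$---is invariant under such diagonal rescalings (equivalently, the projective diameter appearing in Birkhoff's formula for $\tauB$ is unchanged by diagonal scaling). For the second term the \emph{same} vector $\vec y$ is applied to two \emph{different} matrices, so it is at most the induced projective distance $d_H(M_T(\vec x),M_T(\vec y))$, which Lemma~\ref{lem:d_H(M)-d_H(x)} bounds by $2\Lip{g_T}\,d_H(\vec x,\vec y)$.

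Combining the two bounds yields $d_H(\vec f_T(\vec x),\vec f_T(\vec y))\le(\tauB(C)+2\Lip{g_T})\,d_H(\vec x,\vec y)$, and the hypothesis $2\Lip{g_T}<1-\tauB(C)$ makes the factor $\tauB(C)+2\Lip{g_T}$ strictly less than $1$. To close the argument via Banach's theorem I would need the completeness of $(\mathrm{int}(\Sigma),d_H)$; this is the one external ingredient I expect to be the main point to secure. It is classical: points converging to $\partial\Sigma$ have $d_H$-distance tending to infinity, so closed $d_H$-balls are compact subsets of $\mathrm{int}(\Sigma)$ on which $d_H$ induces the usual topology, whence completeness (see e.g.\ \cite{Nus88}). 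Banach's theorem then provides a unique fixed point $\vec x_T\in\mathrm{int}(\Sigma)$ to which all orbits starting in $\mathrm{int}(\Sigma)$ converge; combined with the observation that every orbit enters $\mathrm{int}(\Sigma)$ after one step, this establishes the statement for an arbitrary initial vector in $\Sigma$.
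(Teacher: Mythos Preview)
Your argument is essentially the paper's own proof: the same triangle-inequality split $d_H(\vec x M_T(\vec x),\vec y M_T(\vec x))+d_H(\vec y M_T(\vec x),\vec y M_T(\vec y))$, the same bounds via $\tauB$ and Lemma~\ref{lem:d_H(M)-d_H(x)}, and the same appeal to Banach's theorem on $(\mathrm{int}(\Sigma),d_H)$. You add welcome detail the paper leaves implicit---the diagonal-scaling invariance giving $\tauB(M_T(\vec x))=\tauB(C)$, completeness of the Hilbert metric, and the one-step entry into $\mathrm{int}(\Sigma)$---but the route is the same.
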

\begin{proof}
    Let $\vec x,\vec y\in\mathrm{int}(\Sigma)$.  By Lemma~\ref{lem:d_H(M)-d_H(x)},
    \begin{align*}
        d_H(\vec f_T(\vec x),\vec f_T(\vec y))
        &\le d_H(\vec xM_T(\vec x),\vec yM_T(\vec x))+d_H(\vec yM_T(\vec x),\vec yM_T(\vec y)) \\
        &\le \tauB(M_T(\vec x))\,d_H(\vec x,\vec y)+d_H(M_T(\vec x),M_T(\vec y)) \\
        &\le \big(\tauB(C)+ 2\Lip{g_T}\big)\,d_H(\vec x,\vec y)\enspace.
    \end{align*}
    Therefore, if $2\Lip{g_T}<{1-\tauB(C)}$, then $\vec f_T$ is a contraction on $\mathrm{int}(\Sigma)$ 
    with respect to the distance $d_H$.
    By Banach's Fixed Point Theorem, $\vec f_T$ has a unique fixed point $\vec x_T\in\mathrm{int}(\Sigma)$ and
    all the orbits of $\vec f_T$ converge to this fixed point.
\end{proof}


\subsection{Existence of multiple fixed points of $\vec u_T$ and $\vec f_T$}
Theorems~\ref{thm:u_tau-fxpt-conv} and~\ref{thm:f_tau-fxpt-conv} show that for a sufficiently large temperature $T$, the maps $\vec u_T$ and $\vec f_T$ have a unique fixed point.  We can naturally wonder about the uniqueness of the fixed point of these maps for small $T$: we show that, at least when $C$ is positive, multiple fixed points always exist.

\begin{theorem}\label{thm:mult-pt-fx}
    Assume that $C$ is irreducible and that the first column of $C$ is positive.
    Then, for all $0<\eps<\frac{1}{2}$, there exists $T_\eps$ such that for $T\le T_\eps$,
    the map $\vec u_T$ has a fixed point in $\Sigma_\eps=\set{\vec x\in\Sigma, \vec x_1\ge1-\eps}$.

  Assume now that $C$ is irreducible with $C_{11}>0$ only, 
  and that there exists $\eps_n>0$, independent of $T$, such that 
  $g_T(\eps_n)^{n-1}\le g_T(1-\eps_n)$ for all $T>0$.
  Then, for all $0<\eps<\eps_n$, there exists $T_\eps$ such  that for $T\le T_\eps$,
  the map $\vec u_T$ has a fixed point in $\Sigma_\eps=\set{\vec x\in\Sigma, \vec x_1\ge1-\eps}$.
\end{theorem}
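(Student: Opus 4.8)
The plan is to prove both statements by a single mechanism: to exhibit a threshold $T_\eps$ such that $\vec u_T$ maps the compact convex set $\Sigma_\eps=\set{\vec x\in\Sigma,\ \vec x_1\ge1-\eps}$ into itself for every $T\le T_\eps$, and then to invoke Brouwer's Fixed Point Theorem (as in Proposition~\ref{prop:u_tau-fxpt-exist}, using that $\vec u_T$ is continuous) to produce a fixed point inside $\Sigma_\eps$. Since $\vec u_T(\vec x)=\vec h_T(\vec x)/\ip{\vec h_T(\vec x)}{\vun}$ by Lemma~\ref{lem:expr-u_T}, the inclusion $\vec u_T(\Sigma_\eps)\subseteq\Sigma_\eps$ is equivalent to $\vec u_T(\vec x)_1\ge1-\eps$, that is
\[
  \sum_{r\neq1}\vec h_T(\vec x)_r\le\frac{\eps}{1-\eps}\,\vec h_T(\vec x)_1\qquad\text{for all }\vec x\in\Sigma_\eps.
\]
Hence everything reduces to bounding the ratios $\vec h_T(\vec x)_r/\vec h_T(\vec x)_1$ for $r\neq1$, uniformly in $\vec x\in\Sigma_\eps$, and showing they tend to $0$ as $T\to0$. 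Throughout I use that on $\Sigma_\eps$ one has $\vec x_1\ge1-\eps>1/2>\eps\ge\vec x_j$ for $j\neq1$ (in the second statement $\eps_n<1/2$ is forced by the hypothesis $g_T(\eps_n)^{n-1}\le g_T(1-\eps_n)$, since $g_T\ge1$ is increasing), so that $1\le g_T(\vec x_j)\le g_T(\eps)\le g_T(\vec x_1)$.

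Each summand of $\vec h_T(\vec x)_r$ in~\eqref{eq:h_T} is, up to a product of entries of $C$ bounded by $C_{\max}^n$ (with $C_{\max}=\max_{i,j}C_{ij}$), a monomial $\prod_k g_T(\vec x_k)^{\gamma_k}$ with $\sum_k\gamma_k=n$, and the exponent $\gamma_1$ of $g_T(\vec x_1)$ equals the in-degree of node $1$ in the chosen arborescence plus $1$ if the outer factor $\sum_kC_{rk}g_T(\vec x_k)$ selects $k=1$. The heart of the argument is a count of this exponent. Put $p_1=\abs{\set{i\neq1\colon C_{i1}>0}}$. When $r\neq1$, node $1$ is not the root, so it carries an outgoing edge, and since the root itself carries none, the in-degree of node $1$ is at most $p_1$ if $C_{r1}=0$ and at most $p_1-1$ if $C_{r1}>0$ (in which case the outer factor may add $1$); either way $\gamma_1\le p_1$. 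For $\vec h_T(\vec x)_1$, on the contrary, I build one arborescence $R^*\in\SpArb{1}$ attaining in-degree exactly $p_1$ at node $1$ — attach every $i$ with $C_{i1}>0$ directly to $1$ and route the remaining nodes to $1$ through them, which strong connectivity permits — and select $k=1$ in the outer factor, using $C_{11}>0$; bounding the other factors by $g_T(\vec x_j)\ge1$ gives
\[
  \vec h_T(\vec x)_1\ge c\,g_T(\vec x_1)^{p_1+1},\qquad c=C_{11}\textstyle\prod_{(i,j)\in R^*}C_{ij}>0.
\]
Thus the exponent of $g_T(\vec x_1)$ is strictly larger (by one) for the root $1$ than for any other root.

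It remains to compare. For a summand of $\vec h_T(\vec x)_r$ with exponent $\gamma_1\le p_1$, using $g_T(\vec x_j)\le g_T(\eps)$ for $j\neq1$,
\[
  \frac{C_{\max}^n\,g_T(\vec x_1)^{\gamma_1}g_T(\eps)^{n-\gamma_1}}{c\,g_T(\vec x_1)^{p_1+1}}
  =\frac{C_{\max}^n}{c}\,g_T(\vec x_1)^{\gamma_1-p_1-1}g_T(\eps)^{n-\gamma_1}.
\]
As $g_T(\eps)\le g_T(\vec x_1)$, this is nondecreasing in $\gamma_1$, hence maximal at $\gamma_1=p_1$, where, together with $g_T(\vec x_1)\ge g_T(1-\eps)$, it is at most $\frac{C_{\max}^n}{c}\,g_T(\eps)^{n-p_1}/g_T(1-\eps)$. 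In the first statement the first column of $C$ is positive, so $p_1=n-1$ and this reduces to $\frac{C_{\max}^n}{c}\,g_T(\eps)/g_T(1-\eps)$, which tends to $0$ as $T\to0$ by the preliminary lemma giving $g_T(1-\eps)/g_T(\eps)\to\infty$ (valid since $1-\eps>\eps$); no extra hypothesis on $g_T$ is needed. In the second statement $p_1$ may be as small as $1$, so the complementary power $g_T(\eps)^{n-p_1}$ can be as large as $g_T(\eps)^{n-1}$, and this is exactly where the hypothesis enters: since $\eps<\eps_n$, one has $g_T(\eps)^{n-1}\le g_T(\eps_n)^{n-1}\le g_T(1-\eps_n)$, so the bound becomes $\frac{C_{\max}^n}{c}\,g_T(1-\eps_n)/g_T(1-\eps)$, which tends to $0$ as $T\to0$ because $1-\eps>1-\eps_n$. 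Summing the finitely many summands over $r\neq1$ only multiplies these estimates by a constant independent of $T$ and of $\vec x$, so $\vec u_T(\vec x)_1\to1$ uniformly on $\Sigma_\eps$; choosing $T_\eps$ so that the resulting bound is at most $\eps$ yields $\vec u_T(\Sigma_\eps)\subseteq\Sigma_\eps$ for $T\le T_\eps$, and Brouwer concludes.

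I expect the main obstacle to be the combinatorial exponent count on spanning arborescences: establishing rigorously the strict gap (exponent $p_1+1$ for the root $1$ versus $\le p_1$ for $r\neq1$) and, in particular, the existence of an arborescence rooted at $1$ realizing in-degree $p_1$ at node $1$ via the rerouting argument based on strong connectivity. Everything else is bookkeeping: keeping the estimates uniform in $\vec x\in\Sigma_\eps$, and invoking the hypothesis $g_T(\eps_n)^{n-1}\le g_T(1-\eps_n)$ precisely in the critical regime $\gamma_1=p_1$, where the complementary power of $g_T(\eps)$ is largest.
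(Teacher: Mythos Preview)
Your proposal is correct and follows essentially the same route as the paper's proof: the same exponent count on spanning arborescences (the paper's $k$ is your $p_1$), the same construction of an arborescence rooted at~$1$ saturating the in-degree at node~$1$, the same case split on $C_{r1}$ to obtain $\gamma_1\le p_1$ for $r\neq1$, and the same conclusion via Brouwer. The only cosmetic difference is in the second statement, where the paper routes the estimate through the auxiliary function $\varphi_T(\eps)=(n-1)\ln g_T(\eps)-\ln g_T(1-\eps)$ and the bound $\varphi_T'\ge n\Lipm{g_T}$, whereas you invoke the hypothesis $g_T(\eps_n)^{n-1}\le g_T(1-\eps_n)$ directly together with monotonicity of $g_T$; both arrive at a ratio $g_T(1-\eps_n)/g_T(1-\eps)\to0$.
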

\begin{proof}
  Let $k$ be the number of indices $i\neq1$ such that $C_{i1}>0$.
  Let $0<\eps<\frac{1}{2}$, and let $\vec x\in\Sigma_\eps$.
  By the irreducibility of $C$, there exists a spanning arborescence $R$ rooted at $1$,
  containing all the $k$ arcs $(i,1)$ with $i\neq1$ and $C_{i1}>0$.
  Hence
  \[
    \vec h_T(\vec x)_1 \ge C_{11} g_T(\vec x_1) \prod_{(i,j)\in R} C_{ij}g_T(\vec x_j)
    \ge \alpha g_T(\vec x_1)^{k+1}\ge \alpha g_T(\vec x_1)^{k}g_T(1-\eps)\enspace,
  \]
  where $\alpha=C_{11}\prod_{(i,j)\in R} C_{ij}>0$.
  Let $r\neq1$.  If $C_{r1}\neq0$, then a spanning arborescence rooted at $r$ can have
  at most $k-1$ arcs $(i,1)$ with $C_{i1}>0$, 
  whereas it can have at most $k$  arcs $(i,1)$ with $C_{i1}>0$ in general.
  Hence, in all cases, $\vec h_T(\vec x)_r$ is a sum of positively weighted terms like
  $\prod_\ell g_T(\vec x_\ell)^{\gamma_\ell}$ with $\sum_\ell\gamma_\ell=n$
  and $\gamma_1\le k$.  This implies that, for $r\neq1$,
  \[
    \vec h_T(\vec x)_r \le \beta g_T(\vec x_1)^k g_T(\eps)^{n-k}\enspace,
  \]
  for some positive constant $\beta$.
  Therefore,
  \begin{align*}
    \vec u_T(\vec x)_1 &= \frac{1}{1+\sum_{r\neq1}\frac{\vec h_T(\vec x)_r}{\vec h_T(\vec x)_1}}
      \ge \frac{1}{1+(n-1)\frac{\beta}{\alpha}\frac{g_T(\eps)^{n-k}}{g_T(1-\eps)}}\enspace.
  \end{align*}

  If the first column of $C$ is positive, then $k=n-1$.
  By Lemma~\ref{lem:bornes-ln-Lmax-Lmin}, 
  \[
    \ln\frac{g_T(1-\eps)}{g_T(\eps)}\ge\Lipm{g_T}\,(1-2\eps)\enspace.
  \]
  Therefore, if $\Lipm{g_T}(1-2\eps)\ge\ln\frac{(n-1)\beta}{\alpha}\frac{1-\eps}{\eps}$, we get
  $\vec u_T(\vec x)_1\ge 1-\eps$.  This shows that $\vec u_T(\Sigma_\eps)\subset\Sigma_\eps$.
  By Brouwer's Fixed Point Theorem, 
  the continuous map $\vec u_T$ has therefore at least one fixed point in $\Sigma_\eps$.

  Now, suppose we know only that $C_{11}>0$, but there exists $\eps_n>0$ such that 
  $g_T(\eps_n)^{n-1}\le g_T(1-\eps_n)$ for all $T>0$.
  The map $\varphi_T\colon\eps\mapsto(n-1)\ln g_T(\eps)-\ln g_T(1-\eps)$
  is increasing and its derivative satisfies $\varphi_T'(\eps)\ge n \Lipm{g_T}$.
  Let $0<\eps<\eps_n$.  We have $\varphi_T(\eps_n)-\varphi_T(\eps)\ge n\Lipm{g_T}(\eps_n-\eps)$.
  Moreover, $k\ge1$ by irreducibility of $C$ and $\varphi_T(\eps_n)\le0$, hence
  \[
    \ln\frac{g_T(1-\eps)}{g_T(\eps)^{n-k}}
    \ge\ln\frac{g_T(1-\eps)}{g_T(\eps)^{n-1}} = {-\varphi_T(\eps)}
    \ge {n\Lipm{g_T}(\eps_n-\eps)}\enspace.
  \]
  Therefore, if $n\Lipm{g_T}(\eps_n-\eps)\ge\ln\frac{(n-1)\beta}{\alpha}\frac{1-\eps}{\eps}$,
  we get $\vec u_T(\vec x)_1\ge 1-\eps$.  The result follows by the same argument as above.
\end{proof}

\begin{remark}  When $g_T(x)=\e{E(x)/T}$ for some increasing energy $E$,
  then $\eps_n$ satisfies the condition $g_T(\eps_n)^{n-1}\le g_T(1-\eps_n)$ for all $T>0$
  if and only if $(n-1)E(\eps_n)\le E(1-\eps_n)$, which holds for some $0<\eps_n<1$,
  since $E(0)=0$ and $E(1)>0$.
\end{remark}

\begin{corollary}
  If $C$ is positive, then, for $T>0$ sufficiently small,
  the map $\vec u_T$ has several fixed points in~$\Sigma$.
\end{corollary}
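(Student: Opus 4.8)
The plan is to apply Theorem~\ref{thm:mult-pt-fx} once for each vertex, using the invariance of the construction under relabeling of the indices, and then to observe that the $n$ fixed points so obtained must be distinct because each concentrates its mass near a different vertex. Throughout I assume $n\ge2$, since otherwise $\Sigma$ reduces to a single point. Note also that only the first part of Theorem~\ref{thm:mult-pt-fx} (the case of a positive column) is needed, not the refinement involving $\eps_n$.

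First I would note that a positive matrix $C$ is irreducible and has all its columns positive. In particular its first column is positive, so Theorem~\ref{thm:mult-pt-fx} applies verbatim and produces, for every $0<\eps<\frac12$, a threshold below which $\vec u_T$ has a fixed point in $\Sigma_\eps=\set{\vec x\in\Sigma\colon\vec x_1\ge1-\eps}$.

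Next I would exploit the permutation equivariance of the model. For a permutation $\sigma$ of $\set{1,\dots,n}$, let $\tilde C$ be the matrix with $\tilde C_{ij}=C_{\sigma(i)\sigma(j)}$ and, for $\vec x\in\Sigma$, let $\tilde{\vec x}$ be the vector with $\tilde{\vec x}_i=\vec x_{\sigma(i)}$. A direct substitution in the definition of $M_T$ gives $\tilde M_T(\tilde{\vec x})_{ij}=M_T(\vec x)_{\sigma(i)\sigma(j)}$, and a short computation then shows that $\vec u_T$ is covariant: $\vec x$ is a fixed point of $\vec u_T$ for $C$ if and only if $\tilde{\vec x}$ is a fixed point of the corresponding map for $\tilde C$. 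Since $\tilde C$ is again positive, choosing $\sigma$ with $\sigma(1)=j$ (so that the first column of $\tilde C$ is the positive $j$-th column of $C$, suitably permuted) and applying Theorem~\ref{thm:mult-pt-fx} to $\tilde C$ yields, for each $j\in\set{1,\dots,n}$ and each $0<\eps<\frac12$, a threshold $T_\eps^{(j)}>0$ such that for $T\le T_\eps^{(j)}$ the map $\vec u_T$ has a fixed point $\vec x^{(j)}$ with $\vec x^{(j)}_j\ge1-\eps$.

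Finally I would fix some $0<\eps<\frac12$ and set $T\le\min_{1\le j\le n}T_\eps^{(j)}$, which is positive as a minimum of finitely many positive numbers. This produces fixed points $\vec x^{(1)},\dots,\vec x^{(n)}$ of $\vec u_T$, and these are pairwise distinct: if $j\neq k$ and $\vec x^{(j)}=\vec x^{(k)}$, the common vector would satisfy both $\vec x_j\ge1-\eps$ and $\vec x_k\ge1-\eps$, whence $1=\sum_i\vec x_i\ge\vec x_j+\vec x_k\ge2(1-\eps)>1$, a contradiction for $\eps<\frac12$. Hence $\vec u_T$ has at least $n\ge2$ distinct fixed points in $\Sigma$. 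No genuine obstacle arises: the argument is merely a symmetrization of Theorem~\ref{thm:mult-pt-fx}, the only point requiring care being the routine verification of the permutation equivariance of $M_T$ and $\vec u_T$.
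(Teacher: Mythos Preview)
Your argument is correct and is exactly the intended derivation: the paper states the corollary without proof as an immediate consequence of Theorem~\ref{thm:mult-pt-fx}, and your symmetrization---applying the theorem once per column via permutation equivariance and noting that fixed points concentrated near distinct vertices cannot coincide when $\eps<\tfrac12$---is precisely the natural way to spell this out. Two columns would already suffice to get ``several'', but producing all $n$ does no harm.
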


\paragraph{Example}
If $C$ is not positive, the existence of several fixed points for small $T$ is not insured.
    Indeed, we shall see in Remark~\ref{rem:matrices22} that for
    $C=\left(\begin{smallmatrix}1&2\\1&0\end{smallmatrix}\right)$ and $g_T(x)=\e{x/T}$,
    the fixed point of $\vec u_T$ and $\vec f_T$ is unique for each $T>0$.


\section{Refinement of the model}\label{sec:refinement}

In the present section, we study a more general
model, which includes a \mdef{damping factor} $0<\gamma<1$,
as is the standard definition of Google's PageRank~\cite{BP98},
in which the websurfer either jumps to the search engine
with probability $1-\gamma$ or moves to a neighbor page
with probability $\gamma$. The presence of a
damping factor yields a more realistic model of the
websurfer walk. Moreover, it allows one to deal with reducible matrices,
and it improves the convergence speed of iterative methods.

Let $C$ be a $n\times n$ nonnegative matrix with no zero row and let $\vec d\in\R^n_{>0}$ be a \mdef{personalization vector}.
For all temperature $0<T<\infty$, let us define as previously $g_T\colon {[0,1]}\to\R_{>0}$ as a continuously differentiable and increasing map, with $g_T(0)=1$ and $g_T'\colon {[0,1]}\to\R_{>0}$.  Suppose that Assumptions~\eqref{eq:hyp-g_T-0} and~\eqref{eq:hyp-g_T-infty} are satisfied.  For a temperature $T=\infty$, let us also define $g_\infty(x)=1$ for all $x\in{[0,1]}$.

For any two temperatures $0<T_1,T_2\le\infty$, and for all $\vec x\in\Sigma$, we can consider the positive transition matrix $M_{T_1,T_2,\gamma}(\vec x)$ defined as
\begin{equation}\label{eq:M-T1-T2-gamma}
    M_{T_1,T_2,\gamma}(\vec x)_{ij} 
    = \gamma\, \frac{C_{ij}\,g_{T_1}(\vec x_j)}{\sum_k C_{ik}\,g_{T_1}(\vec x_k)}
    + (1-\gamma)\,\frac{\vec d_j\, g_{T_2}(\vec x_j)}{\sum_k\vec d_k\, g_{T_2}(\vec x_k)}\enspace.
\end{equation}

\begin{remark}
  For simplicity, we consider the same family of weight functions $g_T$
  for the first and the second term of $M_{T_1,T_2,\gamma}(\vec x)$.
  Note however that the results of this section remain true 
  if two families $g_{T_1}$ and $\tilde g_{T_2}$ are considered.
\end{remark}

\begin{remark}
Suppose $T_1=\infty$, $T_2<\infty$ and $0<\gamma<1$, and let $\vec x$ be the current ranking vector.  Then $M_{T_1,T_2,\gamma}(\vec x)$ is the transition matrix of the following random walk on the graph. At each step of his walk, either, with probability $\gamma$, the websurfer
draws the next page uniformly among the pages referenced by his current page.  Or, with probability $1-\gamma$, he refers to the Web search engine, and therefore preferentially chooses for the next page a webpage with a good ranking.
\end{remark}

The maps $\vec u_{T_1,T_2,\gamma}$ and $\vec f_{T_1,T_2,\gamma}$ are defined as previously: $\vec u_{T_1,T_2,\gamma}(\vec x)$ is the unique invariant measure of $M_{T_1,T_2,\gamma}(\vec x)$ and $\vec f_{T_1,T_2,\gamma}(\vec x)=\vec xM_{T_1,T_2,\gamma}(\vec x)$.

Theorem~\ref{thm:u_tau-fxpt-conv} about the uniqueness of the fixed point of $\vec u_{T}$ can be adapted in the following way.
\begin{proposition}
    If $n\Lip{g_{T_1}}+(n-1)\Lip{ g_{T_2}}\le1$,
    the map $\vec u_{T_1,T_2,\gamma}$ has a unique fixed point
    $\vec x_{T_1,T_2,\gamma}$ in $\Sigma$.  
    Moreover, all the orbits of $\vec u_{T_1,T_2,\gamma}$ converge
    to the fixed point $\vec x_{T_1,T_2,\gamma}$.
\end{proposition}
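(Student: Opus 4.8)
The plan is to follow closely the proof of Theorem~\ref{thm:u_tau-fxpt-conv}, applying Nussbaum's Theorem~\ref{thm:Nussbaum} to a suitably normalized map; the only genuinely new ingredient is the bookkeeping of degrees forced by the two-term structure of $M_{T_1,T_2,\gamma}$. First I would apply the Matrix Tree Theorem~\ref{thm:Tutte} to the \emph{positive} stochastic matrix $M_{T_1,T_2,\gamma}(\vec x)$ to write $\vec u_{T_1,T_2,\gamma}(\vec x)=\vec h(\vec x)/\sum_i\vec h(\vec x)_i$ for an explicit $\vec h\colon\R^n_{>0}\to\R^n_{>0}$. Writing $M_{T_1,T_2,\gamma}(\vec x)_{ij}=N_{ij}(\vec x)/\big(D_i^{(1)}(\vec x)\,D^{(2)}(\vec x)\big)$, with $D_i^{(1)}(\vec x)=\sum_k C_{ik}g_{T_1}(\vec x_k)$, $D^{(2)}(\vec x)=\sum_k\vec d_k g_{T_2}(\vec x_k)$ and $N_{ij}(\vec x)=\gamma\,C_{ij}g_{T_1}(\vec x_j)D^{(2)}(\vec x)+(1-\gamma)\vec d_j g_{T_2}(\vec x_j)D_i^{(1)}(\vec x)$, the denominators depend only on the row index $i$; clearing them by the common factor $\big(\prod_i D_i^{(1)}(\vec x)\big)\big(D^{(2)}(\vec x)\big)^{n-1}$ yields
\[
  \vec h(\vec x)_r = D_r^{(1)}(\vec x)\sum_{R\in\SpArb{r}}\prod_{(i,j)\in R}N_{ij}(\vec x)\enspace,
\]
the sum running over spanning arborescences of the complete graph, since $M_{T_1,T_2,\gamma}(\vec x)$ is positive.

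The crucial observation is the degree count. Each $N_{ij}(\vec x)$ is a positive combination of monomials of degree exactly one jointly in $g_{T_1}(\vec x_1),\dots,g_{T_1}(\vec x_n)$ and exactly one jointly in $g_{T_2}(\vec x_1),\dots,g_{T_2}(\vec x_n)$; multiplying the $n-1$ edge factors of an arborescence together with the extra row factor $D_r^{(1)}(\vec x)$, every term of $\vec h(\vec x)_r$ is a positively weighted monomial $\prod_k g_{T_1}(\vec x_k)^{\alpha_k}\prod_k g_{T_2}(\vec x_k)^{\beta_k}$ with $\sum_k\alpha_k=n$ and $\sum_k\beta_k=n-1$. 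Order-preservation of $\vec h$ is then immediate from the monotonicity of $g_{T_1}$ and $g_{T_2}$. For subhomogeneity on $\mathrm{int}(\Sigma)$ I would argue as in Theorem~\ref{thm:u_tau-fxpt-conv}: by the second inequality of Lemma~\ref{lem:bornes-ln-Lmax-Lmin} applied with $x=\vec x_k$ and $y=\lambda\vec x_k$, one has $\ln\big(g_{T_i}(\vec x_k)/g_{T_i}(\lambda\vec x_k)\big)\le\Lip{g_{T_i}}\ln(1/\lambda)$ for $i=1,2$ and $0<\lambda\le1$, so that for each such monomial
\[
  \ln\frac{\prod_k g_{T_1}(\vec x_k)^{\alpha_k}\prod_k g_{T_2}(\vec x_k)^{\beta_k}}{\prod_k g_{T_1}(\lambda\vec x_k)^{\alpha_k}\prod_k g_{T_2}(\lambda\vec x_k)^{\beta_k}}\le\big(n\Lip{g_{T_1}}+(n-1)\Lip{g_{T_2}}\big)\ln\frac{1}{\lambda}\le\ln\frac{1}{\lambda}
\]
under the hypothesis. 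Hence $\lambda\,\vec h(\vec x)\le\vec h(\lambda\vec x)$, which (together with $\vec h(0)\ge0$) gives subhomogeneity.

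It then remains to verify the derivative hypothesis of Nussbaum's Theorem at a fixed point $\vec x\in\mathrm{int}(\Sigma)$, and here the damping term provides a simplification over Theorem~\ref{thm:u_tau-fxpt-conv}: since $\vec d>0$, every entry of $M_{T_1,T_2,\gamma}(\vec x)$ is positive, and for each $r$ and each $\ell$ the function $\vec h(\vec x)_r$ contains a monomial with positive degree in $g_{T_2}(\vec x_\ell)$. Indeed, choosing an arborescence rooted at $r$ that uses at least one edge $(i,j_0)$ with $C_{ij_0}>0$ (such an edge exists because $C$ has no zero row), the factor $N_{ij_0}(\vec x)$ retains its first summand $\gamma\,C_{ij_0}g_{T_1}(\vec x_{j_0})D^{(2)}(\vec x)$, from which one selects the term $\vec d_\ell g_{T_2}(\vec x_\ell)$ of $D^{(2)}(\vec x)$. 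Consequently $\partial\vec h(\vec x)_r/\partial\vec x_\ell>0$ for all $r,\ell$, so $\vec h'(\vec x)$ is a strictly positive matrix, hence primitive, and a fortiori nonnegative and irreducible. Nussbaum's Theorem~\ref{thm:Nussbaum} then gives at most one fixed point of $\vec u_{T_1,T_2,\gamma}$ in $\mathrm{int}(\Sigma)$ and, since $\vec h'(\vec x)$ is primitive, global convergence of the orbits; existence of a fixed point follows from Brouwer's Theorem applied to the continuous map $\vec u_{T_1,T_2,\gamma}\colon\Sigma\to\mathrm{int}(\Sigma)$, and every fixed point lies in $\mathrm{int}(\Sigma)$, exactly as in Proposition~\ref{prop:u_tau-fxpt-exist}.

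I expect the main obstacle to be the first step: the explicit computation of $\vec h$ and the accompanying degree count. The sum-of-fractions structure of $M_{T_1,T_2,\gamma}$ makes both the clearing of denominators and, above all, the separate tracking of the degrees in $g_{T_1}$ and in $g_{T_2}$ delicate; establishing that these degrees are exactly $n$ and $n-1$ respectively is precisely what pins down the threshold $n\Lip{g_{T_1}}+(n-1)\Lip{g_{T_2}}\le1$, and it must be done carefully, noting in particular that the lone row factor $D_r^{(1)}(\vec x)$ contributes only to the $g_{T_1}$ degree. By contrast, once this is in place the subhomogeneity estimate, the primitivity of $\vec h'$ (now automatic from the positivity of $M_{T_1,T_2,\gamma}$ rather than requiring the positive-diagonal argument of Theorem~\ref{thm:u_tau-fxpt-conv}), and the appeal to Nussbaum's Theorem are routine adaptations of the unweighted case.
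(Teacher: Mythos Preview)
Your approach is essentially identical to the paper's: the same clearing of denominators produces the same map $\vec h$ (your $N_{ij}$ is the paper's $W(\vec x)_{ij}$), the same degree count $(n,n-1)$ in $(g_{T_1},g_{T_2})$ yields subhomogeneity under the stated hypothesis, and Nussbaum's Theorem~\ref{thm:Nussbaum} together with Brouwer closes the argument.

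There is, however, a gap in your primitivity verification. Your argument that $\partial\vec h(\vec x)_r/\partial\vec x_\ell>0$ proceeds by locating a monomial with positive degree in $g_{T_2}(\vec x_\ell)$ and hence implicitly uses $g_{T_2}'(\vec x_\ell)>0$. This fails when $T_2=\infty$, a case the proposition is meant to cover (since the section allows $0<T_1,T_2\le\infty$ and then $g_{T_2}\equiv1$). The paper handles this case separately: when $T_2=\infty$ and $T_1<\infty$ one must produce positive degree in $g_{T_1}(\vec x_\ell)$ instead, which requires column $\ell$ of $C$ to be nonzero; the paper reduces to that situation via Remark~\ref{rem:zero-row}. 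The degenerate case $T_1=T_2=\infty$ is disposed of directly by Perron--Frobenius. Apart from these boundary cases your argument is complete and matches the paper's.
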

\begin{proof}
  If $T_1=T_2=\infty$, the result follows directly from Perron--Frobenius theory.
  Let us therefore suppose that $T_1<\infty$ or $T_2<\infty$.
  For every $\vec x\in\Sigma$, by Theorem~\ref{thm:Tutte},
  $\vec u_{T_1,T_2,\gamma}={\vec h_{T_1,T_2,\gamma}(\vec x)}/{\ip{\vec h_{T_1,T_2,\gamma}(\vec x)}{\vun}}$,
  where
  \[
    \vec h_{T_1,T_2,\gamma}(\vec x)_r 
    = \Big(\sum_kC_{rk}\,g_{T_1}(\vec x_k)\Big) \\
    \Big(\sum_{R\in \SpArb{r}}\prod_{(i,j)\in R}W(\vec x)_{ij}\Big)\enspace,
  \]
  with
  $W(\vec x)_{ij}=\sum_k\big(
    \gamma\, C_{ij}\,g_{T_1}(\vec x_j)\,\vec d_k\, g_{T_2}(\vec x_k)+(1-\gamma)\,C_{ik}\,g_{T_1}(\vec x_k)\,\vec d_j\, g_{T_2}(\vec x_j)\big)$.

  Since $g_{T_1}$ and $g_{T_2}$ are nondecreasing, $\vec h_{T_1,T_2,\gamma}$ is an order-preserving map.
  Moreover, assume that $n\Lip{g_{T_1}}+(n-1)\Lip{ g_{T_1}}\le1$.
  Then, as in the proof of Theorem~\ref{thm:u_tau-fxpt-conv},
  $\vec h_{T_1,T_2,\gamma}(\vec x)$ is shown to be subhomogeneous on $\mathrm{int}(\Sigma)$.

  Finally, the derivative $\vec h_{T_1,T_2,\gamma}'(\vec x)$ is a nonnegative continuous function:
  \begin{multline*}
    \frac{\partial \vec h_{T_1,T_2,\gamma}(\vec x)_r}{\partial \vec x_\ell}
    = C_{r\ell}\,g_{T_1}'(\vec x_\ell)\Big(\sum_{R\in \SpArb{r}}\prod_{(i,j)\in R}W(\vec x)_{ij}\Big)\\
    + \Big(\sum_kC_{rk}\,g_{T_1}(\vec x_k)\Big)
    \bigg(\sum_{R\in \SpArb{r}}\Big(\prod_{(i,j)\in R}W(\vec x)_{ij}\Big)
    \Big(\sum_{(i,j)\in R}\frac{\partial W(\vec x)_{ij}/\partial \vec x_\ell}{W(\vec x)_{ij}}\Big)\bigg)\enspace,
  \end{multline*}
  where
  \begin{multline*}
    \frac{\partial W(\vec x)_{ij}}{\partial \vec x_\ell}
    =\delta_{\ell j}\sum_k
    \big(\gamma\, C_{i\ell}\,g_{T_1}'(\vec x_\ell)\,\vec d_k\, g_{T_2}(\vec x_k)
    +(1-\gamma)\,C_{ik}\,g_{T_1}(\vec x_k)\,\vec d_\ell\, g_{T_2}'(\vec x_\ell)\big)\\
    +\gamma\, C_{ij}\,g_{T_1}(\vec x_j)\,\vec d_\ell\, g_{T_2}'(\vec x_\ell)
    +(1-\gamma)\,C_{i\ell}\,g_{T_1}'(\vec x_\ell)\,\vec d_j\, g_{T_2}(\vec x_j)\enspace.
  \end{multline*}
  Let us now prove that $\vec h_{T_1,T_2,\gamma}'(\vec x)$ is a positive matrix for every $\vec x$.
  Suppose first that ${T_2}<\infty$.  Then $ g_{T_2}'(\vec x_\ell)>0$, and
  there exists a spanning arborescence $R\in \SpArb{r}$ and a node $i$ such that $(i,\ell)\in R$, 
  since $\Gr{M_{T_1,T_2,\gamma}(\vec x)}$ is the complete graph.
  It follows that, for this $R$,
  \[
    \sum_{(i,j)\in R}\frac{\partial W(\vec x)_{ij}/\partial \vec x_\ell}{W(\vec x)_{ij}}
    \ge\frac{\partial W(\vec x)_{i\ell}/\partial \vec x_\ell}{W(\vec x)_{i\ell}}
    \ge\frac{\sum_k(1-\gamma)\,C_{ik}\,g_{T_1}(\vec x_k)\,\vec d_\ell\, g_{T_2}'(\vec x_\ell)}{W(\vec x)_{i\ell}}>0\enspace,
  \]
  and hence ${\partial \vec h_{T_1,T_2,\gamma}(\vec x)_r}/{\partial \vec x_\ell}>0$.  
  Now, suppose that ${T_2}=\infty$ and ${T_1}<\infty$.  Then we can suppose without loss of generality that $C$ has no zero column (see Remark~\ref{rem:zero-row} below).
  Either $C_{r\ell}>0$, and therefore ${\partial \vec h_{T_1}(\vec x)_r}/{\partial \vec x_\ell}>0$.
  Or there exists $i\neq r$ such that $C_{i\ell}>0$, and for all $R\in \SpArb{r}$, there exists $j$ such that $(i,j)\in R$, that is
  \[
    \frac{\partial W(\vec x)_{ij}}{\partial \vec x_\ell}
    \ge(1-\gamma)\,C_{i\ell}\,\vec d_j\,g_{T_1}'(\vec x_\ell)>0\enspace,
  \]
  and hence ${\partial \vec h_{T_1,T_2,\gamma}(\vec x)_r}/{\partial \vec x_\ell}>0$. 
  
  Since Brouwer's Fixed Point Theorem ensures the existence of at least one fixed point $\vec x_{T_1,T_2,\gamma}\in\mathrm{int}(\Sigma)$ 
  for the continuous map $\vec u_{T_1,T_2,\gamma}$ which sends $\Sigma$ to $\mathrm{int}(\Sigma)$, by Theorem~\ref{thm:Nussbaum},
  this fixed point $\vec x_{T_1,T_2,\gamma}$ is the unique fixed point of $\vec u_{T_1,T_2,\gamma}$, 
  and all the orbits of $\vec u_{T_1,T_2,\gamma}$ converge to $\vec x_{T_1,T_2,\gamma}$.
\end{proof}

\begin{remark}\label{rem:zero-row}
  If $T_2=\infty$ and the matrix $C$ has a zero column, 
  the problem can be reduced to a problem of smaller dimension with a matrix with no zero column.
  Indeed, suppose the $n$th column of $C$ is zero.  Then
  \begin{align*}
    (\vec u_{T_1,\infty,\gamma}(\vec x)_1\,\cdots\,\vec u_{T_1,\infty,\gamma}(\vec x)_{n-1}) 
      &=\frac{\sum_k\vec d_k-(1-\gamma)\vec d_n}{\sum_k\vec d_k}\,\vec{\tilde u}_{T_1,\infty,\gamma}(\vec{\tilde x})\enspace,\\
    \vec u_{T_1,\infty,\gamma}(\vec x)_n&=(1-\gamma)\frac{\vec d_n}{\sum_k\vec d_k} \enspace,
  \end{align*}
  where $\vec{\tilde x}=(\vec x_1 \,\cdots\,\vec x_{n-1})$ and $\vec{\tilde u}_{T_1,\infty,\gamma}(\vec{\tilde x})$ 
  is the invariant measure of a matrix $\tilde M_{T_1,\infty,\gamma}(\vec{\tilde x})$, 
  with $\tilde C$ the principal submatrix of $C$ corresponding to the indices $1,\dots,n-1$, and $\vec{\tilde d}$ some positive vector of length $n-1$.
\end{remark}

The following adaptations of Theorem~\ref{thm:f_tau-fxpt-conv} and Proposition~\ref{prop:Cpos-f_Tcontr} about the uniqueness of the fixed point of $\vec f_{T}$ are quite direct.
\begin{proposition}
    The fixed points of $\vec u_{T_1,T_2,\gamma}$ and $\vec f_{T_1,T_2,\gamma}$ are the same.
    Moreover, for ${T_1}$ and ${T_2}$ sufficiently large,
    all the orbits of $\vec f_{T_1,T_2,\gamma}$ converge
    to the fixed point $\vec x_{T_1,T_2,\gamma}$ of $\vec u_{T_1,T_2,\gamma}$.
\end{proposition}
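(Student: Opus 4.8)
The plan is to follow the proof of its undamped analogue, Theorem~\ref{thm:f_tau-fxpt-conv}, with one simplification: the presence of the damping term makes the relevant limiting matrix \emph{positive}, so that no primitivity assumption on $C$ is needed. That the two maps share their fixed points is immediate and identical to the undamped case: a fixed point $\vec x$ of $\vec f_{T_1,T_2,\gamma}$ satisfies $\vec x=\vec xM_{T_1,T_2,\gamma}(\vec x)$, hence is the invariant measure of the irreducible matrix $M_{T_1,T_2,\gamma}(\vec x)$, which is unique and equals $\vec u_{T_1,T_2,\gamma}(\vec x)$; the converse is the same computation read backwards.

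For the convergence I would first identify the limiting Jacobian. By Lemma~\ref{lem:M_T-infty} applied to both $g_{T_1}$ and $g_{T_2}$, these weight functions tend uniformly to $1$ as $T_1,T_2\to\infty$, so by~\eqref{eq:M-T1-T2-gamma} the transition matrix tends, uniformly for $\vec x\in\Sigma$, to
\[
    A_\gamma=\gamma\,\diag(C\vun)^{-1}C+(1-\gamma)\,\frac{\vun\,\vec d}{\ip{\vec d}{\vun}}\enspace.
\]
Since $0<\gamma<1$ and $\vec d>0$, the second summand is a positive matrix, so $A_\gamma$ is positive, hence primitive, and $A_\gamma\vun=\vun$ gives $\rho(A_\gamma)=1$. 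Next I would differentiate $\vec v\mapsto\vec vM_{T_1,T_2,\gamma}(\vec v)$ exactly as in Theorem~\ref{thm:f_tau-fxpt-conv}: the Jacobian is $M_{T_1,T_2,\gamma}(\vec v)$, which already converges to $A_\gamma$, plus correction terms, each of which carries a factor $g_{T_1}'(\vec v_\ell)$ or $g_{T_2}'(\vec v_\ell)$. Bounding such a factor by $g_{T_i}(\vec v_\ell)\Lip{g_{T_i}}$ and using $g_{T_i}\to1$ together with $\Lip{g_{T_i}}\to0$ from Assumption~\eqref{eq:hyp-g_T-infty}, all correction terms vanish uniformly, whence $\vec f_{T_1,T_2,\gamma}'(\vec v)\to A_\gamma$ uniformly on $\Sigma$.

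The remainder is a transcription of the earlier argument with $A$ replaced by $A_\gamma$. The subspace $\mathcal{S}=\set{\vec z\in\R^n\colon\ip{\vec z}{\vun}=0}$ is invariant under $\vec x\mapsto\vec xA_\gamma$ and contains no Perron eigenvector of $A_\gamma$, since such an eigenvector has positive entries and so cannot be orthogonal to $\vun$; therefore $\rho((A_\gamma)_{\mathcal S})<\rho(A_\gamma)=1$. Choosing a norm $\norm{\cdot}$ with $\indnorm{(A_\gamma)_{\mathcal S}}<1$, the uniform convergence of $\vec f_{T_1,T_2,\gamma}'$ forces $\vec f_{T_1,T_2,\gamma}$ to be a contraction on $\Sigma$ once $T_1,T_2$ are large enough, and Banach's Fixed Point Theorem yields uniqueness of the fixed point together with global convergence of the orbits.

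The main obstacle I anticipate is purely the bookkeeping of the Jacobian: the damping term and the personalization vector generate noticeably more summands than in the undamped case, and one must verify that every one of the correction terms genuinely contains a derivative factor $g_{T_1}'$ or $g_{T_2}'$ that drives it to $0$. Once that is checked, the positivity of $A_\gamma$ makes the spectral part of the argument automatic, without any hypothesis on the combinatorial structure of $C$.
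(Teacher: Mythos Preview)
Your proposal is correct and follows essentially the same approach as the paper, which simply states that this proposition is a ``quite direct'' adaptation of Theorem~\ref{thm:f_tau-fxpt-conv} without supplying further details. Your observation that the damping term forces the limiting matrix $A_\gamma$ to be positive---so that no primitivity hypothesis on $C$ is required---is correct and is precisely why the paper drops that hypothesis in the statement; the remainder of your argument (uniform convergence of the Jacobian, spectral gap on $\mathcal S$, Banach contraction) is the intended transcription.
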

\begin{proposition}
    Assume that $C$ is positive.  If $2(\Lip{g_{T_1}}+\Lip{ g_{T_2}})<{1-\tauB(C)}$,
    then $\vec f_{T_1,T_2,\gamma}$ has a unique fixed point $\vec x_{T_1,T_2,\gamma}\in\mathrm{int}(\Sigma)$ and
    all the orbits of $\vec f_{T_1,T_2,\gamma}$ converge to the fixed point $\vec x_{T_1,T_2,\gamma}$.
\end{proposition}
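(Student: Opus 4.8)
The plan is to mimic the proof of Proposition~\ref{prop:Cpos-f_Tcontr}, showing that $\vec f_{T_1,T_2,\gamma}$ is a contraction of the complete metric space $(\mathrm{int}(\Sigma),d_H)$ whenever $2(\Lip{g_{T_1}}+\Lip{g_{T_2}})<1-\tauB(C)$, and then invoking Banach's Fixed Point Theorem. As in the un-damped case, the argument splits $d_H(\vec f_{T_1,T_2,\gamma}(\vec x),\vec f_{T_1,T_2,\gamma}(\vec y))$ by the triangle inequality into a part controlled by the contraction coefficient $\tauB(M_{T_1,T_2,\gamma}(\vec x))$ and a part $d_H(M_{T_1,T_2,\gamma}(\vec x),M_{T_1,T_2,\gamma}(\vec y))$ measuring how much the transition matrix itself moves. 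Two ingredients adapt Lemma~\ref{lem:d_H(M)-d_H(x)} and the estimate $\tauB(M_T(\vec x))\le\tauB(C)$ used in Proposition~\ref{prop:Cpos-f_Tcontr} to the two-term matrix~\eqref{eq:M-T1-T2-gamma}.

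First I would prove the analogue of Lemma~\ref{lem:d_H(M)-d_H(x)}, namely $d_H(M_{T_1,T_2,\gamma}(\vec x),M_{T_1,T_2,\gamma}(\vec y))\le 2(\Lip{g_{T_1}}+\Lip{g_{T_2}})\,d_H(\vec x,\vec y)$. Write each entry of~\eqref{eq:M-T1-T2-gamma} as the convex combination $\gamma A(\vec x)_{ij}+(1-\gamma)B(\vec x)_j$, where $A(\vec x)_{ij}=C_{ij}g_{T_1}(\vec x_j)/\sum_kC_{ik}g_{T_1}(\vec x_k)$ is the (row-stochastic) first term and $B(\vec x)_j=\vec d_jg_{T_2}(\vec x_j)/\sum_k\vec d_kg_{T_2}(\vec x_k)$ the teleportation term. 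By the mediant inequality, the ratio $M_{T_1,T_2,\gamma}(\vec x)_{ij}/M_{T_1,T_2,\gamma}(\vec y)_{ij}$ lies between $A(\vec x)_{ij}/A(\vec y)_{ij}$ and $B(\vec x)_j/B(\vec y)_j$, so that $\alpha:=\max_{i,j}(\cdot)$ and $\beta:=\min_{i,j}(\cdot)$ are controlled by the corresponding extrema of the two individual ratios. Each of these is estimated exactly as in Lemma~\ref{lem:d_H(M)-d_H(x)}, the $A$-ratio using $C$ and $g_{T_1}$ and the $B$-ratio using $\vec d$ and $g_{T_2}$ (both families of weights being positive and $C,\vec d$ positive), which via Lemma~\ref{lem:bornes-ln-Lmax-Lmin} yields $\ln\alpha\le(\Lip{g_{T_1}}+\Lip{g_{T_2}})\,d_H(\vec x,\vec y)$ and $-\ln\beta\le(\Lip{g_{T_1}}+\Lip{g_{T_2}})\,d_H(\vec x,\vec y)$. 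The claimed bound then follows from $d_H(M_{T_1,T_2,\gamma}(\vec x),M_{T_1,T_2,\gamma}(\vec y))\le\ln(\alpha/\beta)$.

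Second, and this is the step I expect to require the most care, I would show $\tauB(M_{T_1,T_2,\gamma}(\vec x))\le\tauB(C)$. In the un-damped case this held because $M_T(\vec x)=\diag(\cdot)^{-1}C\diag(g_T(\vec x))$ differs from $C$ only by positive diagonal factors, which leave $d_H$, hence $\tauB$, invariant; the same observation gives $\tauB(A(\vec x))=\tauB(C)$ for the first term alone. It remains to check that mixing with the rank-one, row-constant stochastic matrix $(1-\gamma)\vun\vec b^{\top}$, where $\vec b=B(\vec x)$ is the common teleportation row, does not increase the contraction coefficient. Taking stochastic $\vec z,\vec w$ (allowed by projective invariance of $d_H$), one has $\vec zM_{T_1,T_2,\gamma}(\vec x)=\gamma(\vec zA(\vec x))+(1-\gamma)\vec b$, a convex combination of the stochastic vector $\vec zA(\vec x)$ with the common vector $\vec b$. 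A short mediant computation shows that for stochastic $\vec p,\vec q,\vec b$ the coordinatewise ratios of $\gamma\vec p+(1-\gamma)\vec b$ and $\gamma\vec q+(1-\gamma)\vec b$ all lie between $1$ and the corresponding $\vec p_k/\vec q_k$, so that $d_H(\gamma\vec p+(1-\gamma)\vec b,\gamma\vec q+(1-\gamma)\vec b)\le d_H(\vec p,\vec q)$; applying this with $\vec p=\vec zA(\vec x)$, $\vec q=\vec wA(\vec x)$ and taking the supremum gives $\tauB(M_{T_1,T_2,\gamma}(\vec x))\le\tauB(A(\vec x))=\tauB(C)$.

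With these two estimates the conclusion follows as in the proof of Proposition~\ref{prop:Cpos-f_Tcontr}: using $d_H(\vec zM,\vec wM)\le\tauB(M)\,d_H(\vec z,\vec w)$ together with the triangle inequality,
\[
  d_H(\vec f_{T_1,T_2,\gamma}(\vec x),\vec f_{T_1,T_2,\gamma}(\vec y))\le\big(\tauB(C)+2(\Lip{g_{T_1}}+\Lip{g_{T_2}})\big)\,d_H(\vec x,\vec y)\enspace,
\]
so the hypothesis makes $\vec f_{T_1,T_2,\gamma}$ a contraction of $(\mathrm{int}(\Sigma),d_H)$ into itself (positivity of $M_{T_1,T_2,\gamma}(\vec x)$ ensures that $\vec f_{T_1,T_2,\gamma}$ maps $\Sigma$ into $\mathrm{int}(\Sigma)$, so every orbit enters the interior after one step). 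Banach's Fixed Point Theorem then yields the unique fixed point $\vec x_{T_1,T_2,\gamma}\in\mathrm{int}(\Sigma)$ and global convergence of the orbits, while the coincidence of the fixed points of $\vec f_{T_1,T_2,\gamma}$ and $\vec u_{T_1,T_2,\gamma}$ is immediate, a fixed point of $\vec f_{T_1,T_2,\gamma}$ being an invariant measure of the irreducible matrix $M_{T_1,T_2,\gamma}(\vec x)$.
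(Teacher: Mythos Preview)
Your proposal is correct and is precisely the ``quite direct'' adaptation of Proposition~\ref{prop:Cpos-f_Tcontr} that the paper alludes to without spelling out: the two mediant arguments you supply (one to control $d_H(M_{T_1,T_2,\gamma}(\vec x),M_{T_1,T_2,\gamma}(\vec y))$ via the separate $A$- and $B$-ratios, and one to show that adding the rank-one damping term cannot increase $\tauB$) are exactly the missing ingredients. Note incidentally that your mediant bound in the first step actually yields the sharper estimate $\ln\alpha\le\max(\Lip{g_{T_1}},\Lip{g_{T_2}})\,d_H(\vec x,\vec y)$, so the hypothesis could be weakened to $2\max(\Lip{g_{T_1}},\Lip{g_{T_2}})<1-\tauB(C)$, though the stated form suffices.
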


We next show that the map $\vec f_{T_1,T_2,\gamma}$ has multiple fixed points
if either $T_1$ or $T_2$ is sufficiently small. Of course, this will
require the damping factor to give enough weight to the terms
corresponding to the small temperature in equation~\eqref{eq:M-T1-T2-gamma}.
\begin{proposition}\label{prop:T1small}
    Assume that the first column of $C$ is positive and that $\frac{1}{2}<\gamma<1$.
    Then, there exists $T_0>0$ such that, for all $T_1\le T_0$ and for all $T_2\in {[0,\infty]}$,
    the map $\vec f_{T_1,T_2,\gamma}$ has a fixed point in
    $\set{\vec x\in\Sigma\colon \vec x_1>\frac{1}{2}}$.
\end{proposition}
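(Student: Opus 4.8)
The plan is to mimic the Brouwer-type argument of Theorem~\ref{thm:mult-pt-fx}, but applied directly to $\vec f_{T_1,T_2,\gamma}$ rather than to the invariant-measure map. Fix a constant $c$ with $\frac12<c<\gamma$, which is possible precisely because $\gamma>\frac12$, and consider the compact convex set
\[
    K=\set{\vec x\in\Sigma\colon \vec x_1\ge c}\enspace.
\]
Since $M_{T_1,T_2,\gamma}(\vec x)$ has positive denominators for every $\vec x\in\Sigma$, the map $\vec f_{T_1,T_2,\gamma}$ is continuous on $\Sigma$, so by Brouwer's Fixed Point Theorem it suffices to show that $\vec f_{T_1,T_2,\gamma}(K)\subseteq K$ for all $T_1$ small enough and all $T_2\in{[0,\infty]}$; the resulting fixed point then lies in $K\subset\set{\vec x\in\Sigma\colon \vec x_1>\frac12}$.

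First I would bound $\vec f_{T_1,T_2,\gamma}(\vec x)_1$ from below. Writing $\vec f_{T_1,T_2,\gamma}(\vec x)_1=\sum_i\vec x_i M_{T_1,T_2,\gamma}(\vec x)_{i1}$ and discarding the nonnegative second (damping) term of $M_{T_1,T_2,\gamma}(\vec x)_{i1}$, one obtains
\[
    \vec f_{T_1,T_2,\gamma}(\vec x)_1
    \ge \gamma\,\Big(\sum_i\vec x_i\Big)\min_i\frac{C_{i1}\,g_{T_1}(\vec x_1)}{\sum_k C_{ik}\,g_{T_1}(\vec x_k)}
    = \gamma\,\min_i\frac{C_{i1}\,g_{T_1}(\vec x_1)}{\sum_k C_{ik}\,g_{T_1}(\vec x_k)}\enspace.
\]
The crucial observation is that this lower bound does not involve $T_2$ at all, so the uniformity in $T_2\in{[0,\infty]}$ demanded by the statement is automatic. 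It then remains to show that the minimum tends to $1$ uniformly over $\vec x\in K$ as $T_1\to0$.

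For the uniform estimate, for $\vec x\in K$ and any $i$ I would write
\[
    1-\frac{C_{i1}\,g_{T_1}(\vec x_1)}{\sum_k C_{ik}\,g_{T_1}(\vec x_k)}
    \le\sum_{k\neq1}\frac{C_{ik}}{C_{i1}}\,\frac{g_{T_1}(\vec x_k)}{g_{T_1}(\vec x_1)}\enspace,
\]
which is legitimate because the first column of $C$ is positive, so each $C_{i1}>0$. On $K$ one has $\vec x_1\ge c$ and $\vec x_k\le 1-c$ for $k\neq1$, hence $\vec x_1-\vec x_k\ge 2c-1>0$; Lemma~\ref{lem:bornes-ln-Lmax-Lmin} then gives $g_{T_1}(\vec x_k)/g_{T_1}(\vec x_1)\le\e{-\Lipm{g_{T_1}}(2c-1)}$, a bound independent of $\vec x\in K$ and of $i$. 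Setting $\kappa=\max_i\sum_{k\neq1}C_{ik}/C_{i1}$, this yields
\[
    \vec f_{T_1,T_2,\gamma}(\vec x)_1\ge\gamma\big(1-\kappa\,\e{-\Lipm{g_{T_1}}(2c-1)}\big)\enspace.
\]
By Assumption~\eqref{eq:hyp-g_T-0}, $\Lipm{g_{T_1}}\to\infty$ as $T_1\to0$, and since $2c-1>0$ the right-hand side tends to $\gamma>c$. Thus there is $T_0>0$ such that for all $T_1\le T_0$ the right-hand side is $\ge c$, giving $\vec f_{T_1,T_2,\gamma}(K)\subseteq K$ and hence the claim.

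The one delicate point is keeping every estimate uniform both in $\vec x\in K$ and in $T_2$; the second is resolved for free by discarding the nonnegative damping term, and the first by the strict gap $2c-1>0$, which is available exactly because $c>\frac12$ forces every coordinate $\vec x_k$ ($k\neq1$) to stay strictly below $\vec x_1$ on $K$. The hypothesis $\gamma>\frac12$ enters only through the choice of $c\in(\frac12,\gamma)$, and this is precisely where the damping factor must give enough weight to the first (low-temperature) term.
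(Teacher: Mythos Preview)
Your argument is correct and follows the same Brouwer-type strategy as the paper indicates (``a straightforward adaptation of the proof of Theorem~\ref{thm:mult-pt-fx}''): pick a compact convex neighborhood of a vertex of the simplex, show it is mapped into itself when $T_1$ is small, and apply Brouwer. The estimates are clean, the uniformity in $T_2$ is handled correctly by simply dropping the nonnegative damping term, and the role of the hypothesis $\gamma>\tfrac12$ is identified precisely.

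The one noteworthy difference from a literal adaptation of Theorem~\ref{thm:mult-pt-fx} is that you work directly with $\vec f_{T_1,T_2,\gamma}$, whereas that proof goes through the invariant-measure map $\vec u_T$ and the Matrix Tree expression~\eqref{eq:h_T}. Your route is more elementary: it bypasses the combinatorics of spanning arborescences entirely, and is in fact closer in spirit to the short argument the paper gives for the companion Proposition on small $T_2$. Going through $\vec u_{T_1,T_2,\gamma}$ would also work and yields the same conclusion, but requires tracking the powers of $g_{T_1}(\vec x_1)$ in the tree sums; your one-line lower bound $\vec f(\vec x)_1\ge\gamma\min_i C_{i1}g_{T_1}(\vec x_1)/\sum_kC_{ik}g_{T_1}(\vec x_k)$ achieves the same end with less machinery.
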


The conclusion of Proposition~\ref{prop:T1small} is weaker than that
of Theorem~\ref{thm:mult-pt-fx}. The latter shows that for a sufficiently small temperature,
we can find a fixed point of $\vec f_{T}$ arbitrarily close to a vertex
of the simplex, whereas the former shows that for a sufficiently small
temperature, we can find a fixed point of $\vec f_{T_1,T_2,\gamma}$ in the region $\vec x_1>1/2$
of the simplex. In fact, such a fixed point may not approach a vertex
of the simplex as the temperature tends to~$0$. This discrepancy
is due to presence of the damping factor.
The proof of Proposition.~\ref{prop:T1small} is a straightforward adaptation of the
proof of Theorem~\ref{thm:mult-pt-fx}.

\begin{remark}
    If the first column of $C$ is not positive,
    the existence of a fixed point such that $\vec x_1>\vec x_2,\dots,\vec x_n$
    for small ${T_1}$ is not insured.
    Consider for instance
    $C=\left(\begin{smallmatrix}0&1\\1&1\end{smallmatrix}\right)$,
    $g_T(x)=\e{x/T}$, ${T_2}=\infty$, $\vec d=(1\,\,1)$ and $0<\gamma<1$.
    Then, for each each ${T_1}>0$,
    any fixed point $\vec x_{{T_1},\infty,\gamma}$ of $\vec f_{{T_1},\infty,\gamma}$
    belongs to ${[0,\frac{1}{2}[}\times{]\frac{1}{2},1]}$.
\end{remark}

We now show the existence of multiple fixed points
for a sufficiently small temperature $T_2$.
\begin{proposition}
  Let $0<\eps<\frac{1}{2}$ and let $\mu=\sum_{k\neq1}{\vec d_k}/{\vec d_1}$.
  If $0<\gamma<\eps$, there exists $T_{\eps,\gamma}$ such that for $T_2\le T_{\eps,\gamma}$
  and for all $T_1\in {[0,\infty]}$,
  the map $\vec f_{\infty,{T_2},\gamma}$ has a fixed point in
  $\Sigma_\eps=\set{\vec x\in\Sigma\colon \vec x_1\ge1-\eps}$.
\end{proposition}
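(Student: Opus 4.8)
The statement to prove is the final Proposition: with $C$ an irreducible nonnegative matrix with no zero row, $T_1 = \infty$ (so $g_{T_1} \equiv 1$), $T_2 < \infty$, and the damping factor giving weight $1-\gamma$ to the personalization/ranking term, we want to show that for small $\gamma$ and small $T_2$, there is a fixed point of $\vec f_{\infty, T_2, \gamma}$ in $\Sigma_\eps$.

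Let me understand the structure. The matrix is
$$M_{\infty, T_2, \gamma}(\vec x)_{ij} = \gamma \frac{C_{ij}}{\sum_k C_{ik}} + (1-\gamma) \frac{\vec d_j g_{T_2}(\vec x_j)}{\sum_k \vec d_k g_{T_2}(\vec x_k)}.$$

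The key feature: the first (uniform random-walk) term does NOT depend on the ranking at all (since $g_\infty \equiv 1$), while the second (search-engine) term heavily favors high-ranked pages when $T_2$ is small. When $\vec x_1 \approx 1$, the second term sends essentially all its mass to node $1$ (since $g_{T_2}(\vec x_1)/g_{T_2}(\vec x_k) \to \infty$ for $\vec x_1 > \vec x_k$ as $T_2 \to 0$, by the second preliminary lemma).

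**The approach.**

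I would prove this using Brouwer's fixed point theorem on $\Sigma_\eps$, by showing $\vec f_{\infty,T_2,\gamma}(\Sigma_\eps) \subseteq \Sigma_\eps$, exactly as in the proof of Theorem~\ref{thm:mult-pt-fx}. But now the relevant map is $\vec f$, not $\vec u$, so the argument is actually more direct: I compute $\vec f_{\infty,T_2,\gamma}(\vec x)_1$ from the explicit formula $\vec f(\vec x) = \vec x M(\vec x)$ rather than going through the Matrix Tree Theorem.

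**The key estimate.**

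The plan is to bound $\vec f_{\infty,T_2,\gamma}(\vec x)_1 = \sum_i \vec x_i M_{\infty,T_2,\gamma}(\vec x)_{i1}$ from below for $\vec x \in \Sigma_\eps$. The crucial point is that the second term of $M(\vec x)_{i1}$ is
$$(1-\gamma)\frac{\vec d_1 g_{T_2}(\vec x_1)}{\sum_k \vec d_k g_{T_2}(\vec x_k)},$$
which is the same for every row $i$. For $\vec x \in \Sigma_\eps$ we have $\vec x_1 \ge 1-\eps$ and every other coordinate is $\le \eps$, so the ratio $g_{T_2}(\vec x_1)/g_{T_2}(\vec x_k) \to \infty$ as $T_2 \to 0$ by the second preliminary Lemma (applied since $\vec x_1 > \vec x_k$ whenever $\eps < 1/2$). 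Hence the second term of $M(\vec x)_{i1}$ tends to $(1-\gamma)$ uniformly in $i$ and uniformly over $\vec x \in \Sigma_\eps$ as $T_2 \to 0$. The first term of $\vec f(\vec x)_1$, namely $\gamma \sum_i \vec x_i C_{i1}/\sum_k C_{ik} \ge 0$, is harmless. Therefore
$$\vec f_{\infty,T_2,\gamma}(\vec x)_1 \ge (1-\gamma)\sum_i \vec x_i \cdot \frac{\vec d_1 g_{T_2}(\vec x_1)}{\sum_k \vec d_k g_{T_2}(\vec x_k)} = (1-\gamma)\frac{\vec d_1 g_{T_2}(\vec x_1)}{\sum_k \vec d_k g_{T_2}(\vec x_k)},$$
using $\sum_i \vec x_i = 1$. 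As $T_2 \to 0$ the right-hand side tends to $1-\gamma$ uniformly over $\Sigma_\eps$. Since $\gamma < \eps$ gives $1 - \gamma > 1 - \eps$, there is $T_{\eps,\gamma}$ such that $\vec f_{\infty,T_2,\gamma}(\vec x)_1 \ge 1-\eps$ for all $T_2 \le T_{\eps,\gamma}$, establishing $\vec f_{\infty,T_2,\gamma}(\Sigma_\eps) \subseteq \Sigma_\eps$.

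**The conclusion and the main subtlety.**

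The set $\Sigma_\eps$ is a nonempty compact convex subset of the simplex, and $\vec f_{\infty,T_2,\gamma}$ is continuous (its entries are rational functions of the continuous $g_{T_2}$ with nonvanishing denominators). Brouwer's Fixed Point Theorem then yields a fixed point in $\Sigma_\eps$. The main obstacle — really the only place requiring care — is justifying the \emph{uniform} convergence of $\vec d_1 g_{T_2}(\vec x_1)/\sum_k \vec d_k g_{T_2}(\vec x_k) \to 1$ over the whole region $\Sigma_\eps$ rather than pointwise: one must control the coordinates $\vec x_k$ with $k \ne 1$ that may themselves be close to $\eps$, so that the gap $\vec x_1 - \vec x_k$ could be as small as $1 - 2\eps$. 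Quantitatively, $\sum_{k \ne 1} \vec d_k g_{T_2}(\vec x_k)/(\vec d_1 g_{T_2}(\vec x_1)) \le \mu \cdot g_{T_2}(\eps)/g_{T_2}(1-\eps)$, and by Lemma~\ref{lem:bornes-ln-Lmax-Lmin} together with Assumption~\eqref{eq:hyp-g_T-0}, this is bounded by $\mu\, e^{-\Lipm{g_{T_2}}(1-2\eps)} \to 0$ as $T_2 \to 0$, which is exactly the uniform bound needed. This is the same mechanism as in Theorem~\ref{thm:mult-pt-fx}, so I would simply note that it is a straightforward adaptation of that proof, with the role of $\vec u_T$ replaced by the more transparent $\vec f_{\infty,T_2,\gamma}$.
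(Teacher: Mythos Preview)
Your proof is correct and follows essentially the same route as the paper: drop the nonnegative $\gamma$-term, bound $\vec f_{T_1,T_2,\gamma}(\vec x)_1\ge (1-\gamma)/(1+\mu\,e^{-\Lipm{g_{T_2}}(1-2\eps)})$ via Lemma~\ref{lem:bornes-ln-Lmax-Lmin}, and apply Brouwer on $\Sigma_\eps$. The only difference is cosmetic: the paper carries a general $T_1$ through the first term (which is why the statement reads ``for all $T_1\in[0,\infty]$''), but since that term is discarded as nonnegative for every $T_1$, your argument covers the general case without change.
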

\begin{proof}
  Let $\vec x\in \Sigma_\eps$.
  Since by Lemma~\ref{lem:bornes-ln-Lmax-Lmin},
  $\ln({ g_{T_2}(1-\eps)}/{ g_{T_2}(\eps)})\ge\Lipm{ g_{T_2}}\,(1-2\eps)$,
  we have
  \begin{align*}
    \vec f_{T_1,T_2,\gamma}(\vec x)_1 
    &= \gamma\sum_i\frac{C_{i1}g_{T1}(\vec x_1)\vec x_i}{\sum_{k}C_{ik}g_{T1}(\vec x_k)}
    +(1-\gamma)\frac{\vec d_1 g_{T_2}(\vec x_1)}{\sum_k\vec d_k g_{T_2}(\vec x_k)}
    \ge (1-\gamma)\frac{\vec d_1\, g_{T_2}(\vec x_1)}{\sum_k\vec d_k\, g_{T_2}(\vec x_k)}\\
    &\ge\frac{1-\gamma}{1+\mu\,\e{-\Lipm{ g_{T_2}}\,(1-2\eps)}}\enspace.
  \end{align*}
  Therefore, if $\Lipm{ g_{T_2}}(1-2\eps)\ge\ln\frac{\mu(1-\eps)}{\eps-\gamma}$,
  then $\vec f_{T_1,T_2,\gamma}(\vec x)_1\ge 1-\eps$
  and $\vec f_{\infty,T_2,\gamma}(\vec x)\in  \Sigma_\eps$.
  By Brouwer's Fixed Point Theorem, the continuous map $\vec f_{T_1,T_2,\gamma}$
  has at least one fixed point in $\Sigma_\eps$.
\end{proof}

We now consider the case where the damping factor $\gamma$ approaches $1$ and $T_1=\infty$. Then, the
corresponding value of the generalized PageRank converges to an invariant measure of the matrix $\diag(C\vun)^{-1}C$, independently of the choice of $T_2$. In order to prove this,  we will need the following classical result, which is a particular case of Corollary~3.1 in~\cite{Mey74}.  For a matrix $M\in\C^{n\times n}$, the \mdef{index}, $\ind{M}$, is the smallest nonnegative integer $k$ such that $\rank{M^{k+1}}=\rank{M^k}$, and its \mdef{Drazin inverse}, $M^D$, is the unique solution $X$ of the equations $M^{k+1}X=M^k$, $XMX=X$, $MX=XM$, where $k=\ind{M}$.
\begin{lemma}[See~\cite{Mey74}, Coro.~3.1]\label{lem:M-matrices}
  Let $M\in\C^{n\times n}$.  If $\ind{M}=1$, then $\lim_{\eps\to0}\eps(M+\eps I)^{-1}=I-MM^D$.
\end{lemma}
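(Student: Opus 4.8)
The plan is to reduce everything to the core--nilpotent (spectral) decomposition of $M$, in which the hypothesis $\ind{M}=1$ forces the nilpotent part to vanish, and then to compute the Drazin inverse, $I-MM^D$, and the resolvent $\eps(M+\eps I)^{-1}$ explicitly in block form.

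First I would recall the standard fact that, for $k=\ind{M}$, the space $\C^n$ splits as the direct sum of the range of $M^k$ and the null space of $M^k$, that both subspaces are $M$-invariant, and that $M$ acts invertibly on the first and nilpotently (with nilpotency index $k$) on the second. Specializing to $k=1$, and observing that a nilpotent operator of index $1$ is the zero operator, this produces an invertible matrix $P$ and an invertible block $C$ with
\[
    M = P\begin{pmatrix} C & 0 \\ 0 & 0\end{pmatrix}P^{-1}\enspace.
\]
Next I would check that in these coordinates the Drazin inverse is
\[
    M^D = P\begin{pmatrix} C^{-1} & 0 \\ 0 & 0\end{pmatrix}P^{-1}\enspace,
\]
by verifying the three defining equations (with $k=1$), namely $M^2X=M$, $XMX=X$, and $MX=XM$; each is an immediate block computation. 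It follows at once that
\[
    MM^D = P\begin{pmatrix} I & 0 \\ 0 & 0\end{pmatrix}P^{-1}\enspace, \qquad
    I-MM^D = P\begin{pmatrix} 0 & 0 \\ 0 & I\end{pmatrix}P^{-1}\enspace.
\]

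Finally, for $\eps>0$ small enough that $C+\eps I$ is invertible, the same coordinates give
\[
    \eps(M+\eps I)^{-1} = P\begin{pmatrix} \eps(C+\eps I)^{-1} & 0 \\ 0 & I\end{pmatrix}P^{-1}\enspace.
\]
Since $C$ is invertible, $(C+\eps I)^{-1}\to C^{-1}$ stays bounded, so the upper-left block tends to $0$, while the lower-right block is identically $I$. Letting $\eps\to0$ therefore yields $P\left(\begin{smallmatrix}0&0\\0&I\end{smallmatrix}\right)P^{-1}=I-MM^D$, as claimed.

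The only genuine content lies in the first step: justifying the invariant block splitting and the fact that index $1$ annihilates the nilpotent part. Everything afterward is a routine block calculation, and the passage to the limit is transparent once the blocks have been made diagonal. An alternative route would be to expand the resolvent $(M+\eps I)^{-1}$ as a Laurent series in $\eps$ about $\eps=0$ and identify the constant term of $\eps(M+\eps I)^{-1}$ with the spectral projection onto $N(M)$ along $R(M)$; but the block form above is the most economical and is exactly the structure underlying Meyer's Corollary~3.1 in~\cite{Mey74}.
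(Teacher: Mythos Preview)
Your proof is correct. Note, however, that the paper does not supply its own proof of this lemma: it is quoted as a classical result, a particular case of Corollary~3.1 in~\cite{Mey74}, and used as a black box. There is therefore nothing in the paper to compare your argument against. Your route via the core--nilpotent decomposition is the standard one underlying Meyer's result, and each step (the block form of $M$ when $\ind M=1$, the explicit Drazin inverse, and the block computation of the resolvent) is sound.
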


\begin{proposition}
  Assume $T_1=\infty$.
  For all vector norm $\norm{\cdot}$ and all $\eps>0$, there exists $\gamma_\eps<1$ such that
  for every fixed point $\vec x$ of $\vec f_{\infty,T_2,\gamma}$, with $\gamma_\eps<\gamma<1$, 
  there exists an invariant measure $\vec u$ of $\diag(C\vun)^{-1}C$ such that $\norm{\vec x-\vec u}<\eps$.
\end{proposition}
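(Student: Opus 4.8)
The plan is to exploit the fact that, for $T_1=\infty$, the weight $g_\infty\equiv1$ makes the first block of $M_{\infty,T_2,\gamma}$ independent of $\vec x$. Writing $A=\diag(C\vun)^{-1}C$ and letting $\vec q(\vec x)$ be the stochastic row vector with entries $\vec q(\vec x)_j=\vec d_j\,g_{T_2}(\vec x_j)/\sum_k\vec d_k\,g_{T_2}(\vec x_k)$, equation~\eqref{eq:M-T1-T2-gamma} reads $M_{\infty,T_2,\gamma}(\vec x)=\gamma A+(1-\gamma)\vun\,\vec q(\vec x)$. First I would rewrite the fixed-point equation $\vec x=\vec x\,M_{\infty,T_2,\gamma}(\vec x)$ of $\vec f_{\infty,T_2,\gamma}$: since $\vec x\in\Sigma$ satisfies $\vec x\vun=1$, it becomes $\vec x=\gamma\,\vec x A+(1-\gamma)\vec q(\vec x)$, that is $\vec x(I-\gamma A)=(1-\gamma)\vec q(\vec x)$. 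As $\rho(\gamma A)=\gamma<1$, the matrix $I-\gamma A$ is invertible, so every fixed point is given explicitly by $\vec x=\vec q(\vec x)\,B_\gamma$, where $B_\gamma:=(1-\gamma)(I-\gamma A)^{-1}$.

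The core of the argument is to understand the singular limit of $B_\gamma$ as $\gamma\to1$, where $I-\gamma A$ degenerates because $A\vun=\vun$. With the substitution $\mu=(1-\gamma)/\gamma$ one checks the algebraic identity $B_\gamma=\mu\big((I-A)+\mu I\big)^{-1}$, and $\mu\to0^{+}$ as $\gamma\to1$. Because $A$ is stochastic, its powers $A^k$ are all stochastic hence bounded, so the eigenvalue $1$ is semisimple and $\ind(I-A)=1$. Lemma~\ref{lem:M-matrices} applied to $M=I-A$ then yields $\lim_{\gamma\to1}B_\gamma=I-(I-A)(I-A)^D=:P$, a convergence valid in any matrix norm.

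Next I would identify $P$ and verify that $\vec u:=\vec q(\vec x)\,P$ is a genuine invariant measure. From the Neumann series $B_\gamma=(1-\gamma)\sum_{k\ge0}\gamma^kA^k$ one sees that each $B_\gamma$ is a nonnegative matrix with unit row sums, so its limit $P$ is again nonnegative and stochastic, in particular $P\vun=\vun$. Moreover the index-one identity $(I-A)(I-A)^D(I-A)=I-A$ gives $P(I-A)=0$, whence $\vec u A=\vec u$. Since $\vec q(\vec x)\in\Sigma$ and $P$ is stochastic, $\vec u=\vec q(\vec x)P$ lies in $\Sigma$; thus $\vec u$ is an invariant measure of $A=\diag(C\vun)^{-1}C$, and not merely a generalized left-eigenvector.

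Finally the estimate is immediate: $\norm{\vec x-\vec u}=\norm{\vec q(\vec x)(B_\gamma-P)}\le\norm{\vec q(\vec x)}\,\indnorm{B_\gamma-P}\le K\,\indnorm{B_\gamma-P}$, where $K=\sup_{\vec z\in\Sigma}\norm{\vec z}<\infty$ and $\indnorm{\cdot}$ is the induced norm. As $\indnorm{B_\gamma-P}\to0$ by the second step and $K$ depends neither on the fixed point nor on $T_2$, choosing $\gamma_\eps<1$ with $K\,\indnorm{B_\gamma-P}<\eps$ for $\gamma\in{]\gamma_\eps,1[}$ finishes the proof. I expect the main obstacle to be precisely this singular limit: the relation $\vec x=\vec q(\vec x)B_\gamma$ is of indeterminate type $0\cdot\infty$ at $\gamma=1$, and the whole argument hinges on extracting its finite value through the Drazin inverse together with the semisimplicity of the eigenvalue $1$; the secondary point requiring care is the observation that $P$ is stochastic, which is what guarantees that the limiting vector $\vec u$ is a bona fide invariant measure with nonnegative entries.
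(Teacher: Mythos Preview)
Your proof is correct and follows essentially the same route as the paper: rewrite the fixed-point equation as $\vec x=\vec q(\vec x)\,(1-\gamma)(I-\gamma A)^{-1}$, invoke Lemma~\ref{lem:M-matrices} via the substitution $\mu=(1-\gamma)/\gamma$ and $\ind(I-A)=1$ to obtain the limit $P=I-(I-A)(I-A)^D$, and then verify that $\vec q(\vec x)P$ is an invariant measure of $A$. Your additions---the explicit boundedness argument for the index, and the Neumann-series justification that $P$ is stochastic---fill in details the paper leaves to a reference, but the overall architecture is identical.
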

\begin{proof}
  Let $A=\diag(C\vun)^{-1}C$, and let $M=I-A$.
  Since $A$ is stochastic, $\ind{M}=1$ (see for instance Theorem~8.4.2 in~\cite{BP94}).
  Therefore, with $\gamma=(1+\eps)^{-1}$,
  \[
    \lim_{\gamma\to1}(1-\gamma)(I-\gamma A)^{-1}=\lim_{\eps\to0}\eps(M+\eps I)^{-1}=I-MM^D\enspace.
  \]
  Let $\norm{\cdot}$ be a vector norm and $\indnorm{\cdot}$ its induced matrix norm,
  and let $\nu>0$ such that $\norm{\vec v}\le\nu$ for all stochastic vector $\vec v$.
  Let $\eps>0$.  There exists $\gamma_\eps<1$ such that if $\gamma_\eps<\gamma<1$,
  \[
    \indnorm{(1-\gamma)(I-\gamma A)^{-1}-(I-MM^D)}<\nu^{-1}\eps\enspace.
  \]
  Let $\gamma\in{]\gamma_\eps,1[}$, and let $\vec x$ be a fixed point of $\vec f_{\infty,T_2,\gamma}$,
  that is $\vec x=\vec v(\vec x)(1-\gamma)(I-\gamma A)^{-1}$,
  where $\vec v(\vec x)_i={\vec d_i\,g_{T_2}(\vec x_i)}/{\sum_k \vec d_k\,g_{T_2}(\vec x_k)}$
  for all $i$.  Then,
  \[ \norm{\vec x-\vec v(\vec x)(I-MM^D)}<\eps\enspace. \]
  But $\vec v(\vec x)(I-MM^D)$ is an invariant measure of the matrix $A$.
  Indeed, $I-MM^D$ is stochastic, 
  and $(I-MM^D)(I-A)=M-MM^DM=M-M^2M^D=0$, by definition of the Drazin inverse.
\end{proof}


\section{Estimating the critical temperature}\label{sec:particular-cases}

We call \mdef{critical temperature} the larger temperature for which the number of fixed points of $\vec u_T$ changes.  It corresponds to the loss of the uniqueness of the fixed point.  In this section, we are interested in estimating the critical temperature for some particular cases.  We will study in details the case of $n\times n$ matrices of all ones with the particular weight function $g_T(x)=\e{x/T}$.


We suppose that $g_T(x)=\e{x/T}$ and first consider the particular case where the graph is \emph{complete} with 
\[
    C=\begin{pmatrix}1&\cdots&1\\
    \vdots&\ddots&\vdots\\
    1&\cdots&1\end{pmatrix}\in\R^{n\times n}\enspace.
\]
For this matrix, the point $\vec x=\frac{1}{n}\vun$ is a fixed point of $\vec u_T$ for all $T$.
We are interested in the existence of other fixed points, depending on the temperature $T$.
\begin{lemma}\label{lem:x-e(-x/T)}
    Assume that $C$ is the $n\times n$ matrix of all ones.
    The point $\vec x$ is a fixed point of $\vec u_T$ if and only if
    $\vec x\in\Sigma$ and there exists $\lambda\in\R$ such that
    $\lambda={\vec x_i}{\e{-\vec x_i/T}}$ for every $i=1,\dots,n$.
\end{lemma}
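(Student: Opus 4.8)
The plan is to characterize the fixed points of $\vec u_T$ by using the explicit formula for the invariant measure given in Lemma~\ref{lem:expr-u_T} in the special case where $C$ is the matrix of all ones. First I would observe that when $C_{ij}=1$ for all $i,j$, the factor $\sum_k C_{rk}g_T(\vec x_k)=\sum_k g_T(\vec x_k)$ is independent of the root $r$, and the spanning arborescence sum $\sum_{R\in\SpArb{r}}\prod_{(i,j)\in R}g_T(\vec x_j)$ also simplifies because the complete graph $\Gr{C}$ has all possible arborescences available at every root. Concretely, in a spanning arborescence rooted at $r$, each node $i\neq r$ contributes exactly one outgoing edge $(i,j)$, so the product $\prod_{(i,j)\in R}g_T(\vec x_j)$ only depends on the multiset of heads $j$. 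The key combinatorial fact I expect to use is that, for the complete graph, the weighted sum over arborescences rooted at $r$ factors in a way that makes $\vec h_T(\vec x)_r$ proportional to $g_T(\vec x_r)^{-1}$ times a symmetric quantity independent of $r$.

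The cleanest route avoids heavy arborescence bookkeeping: I would instead work directly from the defining relation $\vec u_T(\vec x)=\vec u_T(\vec x)M_T(\vec x)$ specialized to the all-ones $C$. When $C$ is all ones, $M_T(\vec x)_{ij}=g_T(\vec x_j)/\sum_k g_T(\vec x_k)$, which is independent of $i$. So $M_T(\vec x)$ has identical rows, and therefore for \emph{any} stochastic vector $\vec y$, the product $\vec y M_T(\vec x)$ equals the common row of $M_T(\vec x)$, namely the vector with $j$-th entry $g_T(\vec x_j)/\sum_k g_T(\vec x_k)$. Hence the unique invariant measure is
\[
    \vec u_T(\vec x)_j = \frac{g_T(\vec x_j)}{\sum_k g_T(\vec x_k)}\enspace.
\]
A point $\vec x$ is then a fixed point precisely when $\vec x_j = g_T(\vec x_j)/\sum_k g_T(\vec x_k)$ for every $j$, i.e. when $\vec x_j = \lambda\, g_T(\vec x_j)$ for all $j$, where $\lambda = 1/\sum_k g_T(\vec x_k)$ is a single constant independent of $j$.

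Substituting the specific weight $g_T(x)=\e{x/T}$ gives $\vec x_j=\lambda\,\e{\vec x_j/T}$, which rearranges to $\lambda = \vec x_j\,\e{-\vec x_j/T}$ for every $j=1,\dots,n$. Conversely, if $\vec x\in\Sigma$ satisfies $\lambda=\vec x_i\,\e{-\vec x_i/T}$ for all $i$ with a common $\lambda\in\R$, then reversing the algebra shows $\vec x_j=\lambda\,\e{\vec x_j/T}$; summing over $j$ and using $\sum_j\vec x_j=1$ identifies $\lambda=1/\sum_k\e{\vec x_k/T}$ as forced, and then $\vec x_j=g_T(\vec x_j)/\sum_k g_T(\vec x_k)=\vec u_T(\vec x)_j$, so $\vec x$ is a fixed point. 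This establishes the equivalence in both directions. I do not anticipate a genuine obstacle here; the only point requiring a little care is making sure the constant $\lambda$ in the statement is indeed the same for all indices, which is exactly the content of the fixed-point equation $\vec x=\vec u_T(\vec x)$ after clearing the common denominator $\sum_k g_T(\vec x_k)$.
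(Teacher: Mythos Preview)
Your argument is correct and essentially matches the paper's own proof, which is the one-liner ``this follows directly from $\vec x=\vec f_T(\vec x)$.'' The only cosmetic difference is that the paper invokes the equivalence between fixed points of $\vec u_T$ and $\vec f_T$ (Theorem~\ref{thm:f_tau-fxpt-conv}) and works with $\vec x=\vec x M_T(\vec x)$, whereas you compute the invariant measure $\vec u_T(\vec x)$ directly by observing that $M_T(\vec x)$ has identical rows; both routes land on the identity $\vec x_j=g_T(\vec x_j)/\sum_k g_T(\vec x_k)$ in one step.
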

\begin{proof}
    This follows directly from $\vec x=\vec f_T(\vec x)$.
\end{proof}

\begin{lemma}\label{lem:all-ones}
    Assume that $C$ is the $n\times n$ matrix of all ones.
    The point $\vec x$ is a fixed point of $\vec u_T$ if and only if
    $\vec x\in\Sigma$ and there exists $\mathcal{K}\subseteq\set{1,\dots,n}$,
    $y\in{[0,T]}$ and $z\ge T$ such that $y\e{-y/T}=z\e{-z/T}$, 
    $|\mathcal{K}|y+(n-|\mathcal{K}|)z=1$,
    $\vec x_i = y$ for all $i\in\mathcal{K}$ and 
    $\vec x_i = z$ for all $i\notin\mathcal{K}$.
\end{lemma}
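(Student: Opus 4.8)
The plan is to build directly on Lemma~\ref{lem:x-e(-x/T)}, which already reduces the problem to a one-variable analysis. By that lemma, $\vec x$ is a fixed point of $\vec u_T$ precisely when $\vec x\in\Sigma$ and all the entries $\vec x_i$ are solutions of a single scalar equation $\phi(t)=\lambda$, where I set $\phi(t)=t\e{-t/T}$ and $\lambda$ is the common value. So everything hinges on understanding the level sets of $\phi$ on ${[0,1]}$. Differentiating gives $\phi'(t)=\e{-t/T}\,(1-t/T)$, so $\phi$ is strictly increasing on ${[0,T]}$ and strictly decreasing on ${[T,\infty)}$, with a unique maximum at $t=T$. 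Consequently, for each value $\lambda$, the equation $\phi(t)=\lambda$ has at most one solution in ${[0,T]}$ and at most one in ${[T,\infty)}$.

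For the forward implication, I would observe that since every entry $\vec x_i$ solves $\phi(\vec x_i)=\lambda$, the set $\set{\vec x_i\colon i=1,\dots,n}$ contains at most two values: one solution $y\le T$ and one solution $z\ge T$ (which coincide, equal to $T$, in the degenerate case $\lambda=T\e{-1}$). I then define $\mathcal{K}=\set{i\colon \vec x_i=y}$, so that $\vec x_i=y$ for $i\in\mathcal{K}$ and $\vec x_i=z$ for $i\notin\mathcal{K}$. The identity $\phi(y)=\phi(z)=\lambda$ is exactly $y\e{-y/T}=z\e{-z/T}$, and the constraint $\vec x\in\Sigma$ gives $\sum_i\vec x_i=|\mathcal{K}|y+(n-|\mathcal{K}|)z=1$, yielding both required equations. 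The one point needing care is the degenerate case where all entries lie on the same side of $T$, so that $\mathcal{K}=\set{1,\dots,n}$ (for instance when $T\ge1$, forcing $\vec x=\frac1n\vun$): here no entry actually equals $z$, and one must still exhibit a formal root $z\ge T$ of $\phi(z)=\lambda$. This exists because $\phi$ decreases continuously from $\phi(T)=T\e{-1}$ to $0$ on ${[T,\infty)}$, so any $\lambda=\phi(y)\in{(0,T\e{-1}]}$ is attained, noting that $y>0$ since the entries sum to $1$.

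The converse is immediate: given $\mathcal{K}$, $y$ and $z$ with the stated properties, setting $\lambda=y\e{-y/T}=z\e{-z/T}$ makes every entry satisfy $\vec x_i\e{-\vec x_i/T}=\lambda$, so Lemma~\ref{lem:x-e(-x/T)} shows that $\vec x$ is a fixed point of $\vec u_T$. I expect the only genuine subtlety to be the unimodality bookkeeping of the first paragraph together with the degenerate single-value case of the second; once the two-solution structure of $\phi^{-1}(\lambda)$ is pinned down, the partition into $\mathcal{K}$ and its complement and the two defining equations follow mechanically.
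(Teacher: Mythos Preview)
Your proposal is correct and follows the same route as the paper: both rely on Lemma~\ref{lem:x-e(-x/T)} together with the unimodality of $t\mapsto t\e{-t/T}$ (increasing on ${[0,T]}$, decreasing on ${[T,\infty)}$) to conclude that the entries of $\vec x$ take at most two values. Your write-up is more careful than the paper's two-sentence sketch---in particular you address the degenerate one-value case and the existence of the formal second root---but there is no difference in strategy.
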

\begin{proof}
    Since the map $x\mapsto x\e{-x/T}$ is increasing for $0\le x<T$ and decreasing for $x>T$,
    there can be at most two values $y\neq z$ such that $y\e{-y/T}=z\e{-z/T}=\lambda$ for a given $\lambda\in\R$.
    The result hence follows from Lemma~\ref{lem:x-e(-x/T)} and $\vec x\in\Sigma$.
\end{proof}

Since in our case, $\Lip{g_T}=T^{-1}$, we know from Theorem~\ref{thm:u_tau-fxpt-conv} that the critical temperature is at most $n$.  Proposition~\ref{prop:allones-bornes} shows that this critical temperature is in fact roughly $(\ln n)^{-1}$ when $n$ tends to infinity.
\begin{proposition}\label{prop:allones-bornes}
    Assume that $C$ is the $n\times n$ matrix of all ones.
    If $n>2$, the map $\vec u_T$ has a unique fixed point 
    $\vec x=\left(\frac{1}{n}\,\cdots\,\frac{1}{n}\right)$
    if and only if $T> T^*(n)$, where
    \[
        \frac{1-\frac{1}{\ln n}}{\ln((\ln n-1)n+1)}
        \le T^*(n)
        = \sup_{\alpha>1}\frac{1-\frac{1}{\alpha}}{\ln((\alpha-1)n+1)}
        < \frac{1}{\ln{(n-1)}}\enspace,
    \]
    thus $T^*(n)\sim \frac{1}{\ln n}$ when $n$ tends to $\infty$.
    If $n=2$, the map $\vec u_T$ has a unique fixed point 
    $\vec x=\left(\frac{1}{2}\,\,\frac{1}{2}\right)$
    if and only if $T\ge T^*(2)=\frac{1}{2}$.
\end{proposition}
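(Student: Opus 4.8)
The plan is to exploit the exact description of the fixed points given by Lemma~\ref{lem:all-ones} and reduce everything to a one-variable study. By that lemma, besides the uniform point $\frac1n\vun$, every fixed point of $\vec u_T$ assumes exactly two values $y<z$, with $m$ coordinates equal to $y$ and $n-m$ equal to $z$ for some $1\le m\le n-1$, subject to $y\,\e{-y/T}=z\,\e{-z/T}$ and $my+(n-m)z=1$. Taking logarithms in the first relation yields $T=(z-y)/(\ln z-\ln y)$, the logarithmic mean of $y$ and $z$; in particular $y<T<z$, so the sign constraints of Lemma~\ref{lem:all-ones} hold automatically. I would then parametrize each such family by $s=z/y>1$; the normalization gives $y=1/(m+(n-m)s)$ and hence $T=T_m(s):=(s-1)/\big((m+(n-m)s)\ln s\big)$. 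Since $m+(n-m)s=m(1-s)+ns$ is decreasing in $m$ for every fixed $s>1$, the quantity $T_m(s)$ is maximal at $m=n-1$, the \emph{single winner} configuration, for which $T_{n-1}(s)=(s-1)/\big((s+n-1)\ln s\big)$.

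Next I would set $T^*(n):=\sup_{s>1}T_{n-1}(s)$ and identify it with the supremum in the statement: the substitution $s=n(\alpha-1)+1$, a bijection of $(1,\infty)$ onto itself, gives $s+n-1=n\alpha$ and $s-1=n(\alpha-1)$, so $T_{n-1}(s)=\frac{1-1/\alpha}{\ln((\alpha-1)n+1)}$, which is exactly the displayed supremand. Each $T_m$ is continuous on $(1,\infty)$ with $T_m(s)\to\frac1n$ as $s\to1^+$ and $T_m(s)\to0$ as $s\to\infty$, and $T_m(s)\le T_{n-1}(s)\le T^*(n)$ throughout. Therefore the set of temperatures carrying a non-uniform fixed point is the union of the ranges of the maps $T_m$; this union is contained in $(0,T^*(n)]$ and, by the intermediate value theorem applied to $T_{n-1}$, equals $(0,T^*(n)]$ as soon as the supremum $T^*(n)$ is attained. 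Granting this, $\vec u_T$ has $\frac1n\vun$ as its only fixed point precisely when $T>T^*(n)$.

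The crux — and where $n>2$ and $n=2$ part ways — is the attainment of $T^*(n)$. A short expansion gives $T_{n-1}(s)=\frac1n\big(1+(\frac12-\frac1n)(s-1)+o(s-1)\big)$ near $s=1$, whose linear coefficient is positive exactly when $n>2$. For $n>2$, $T_{n-1}$ thus exceeds $\frac1n$ somewhere and, being continuous with end limits $\frac1n$ and $0$, attains its supremum at an interior $s^\ast>1$; the range is then $(0,T^*(n)]$ and uniqueness holds iff $T>T^*(n)$. For $n=2$ the coefficient vanishes, and in fact $T_1(s)<\frac12$ for all $s>1$ with $T_1(s)\to\frac12$ as $s\to1^+$: this amounts to $h(s):=(s+1)\ln s-2(s-1)>0$, which is immediate since $h(1)=h'(1)=0$ and $h''(s)=(s-1)/s^2>0$. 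Hence for $n=2$ the value $T^*(2)=\frac12$ is \emph{not} attained, the range of $T_1$ is $(0,\frac12)$, and uniqueness holds iff $T\ge\frac12$, as claimed. This attained/not-attained dichotomy is the main obstacle; the remaining steps are routine one-variable estimates.

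Finally I would bound $T^*(n)$ for $n>2$. The lower bound comes from evaluating the supremand at $\alpha=\ln n$ (permissible since $\ln n>1$ for $n\ge3$), which gives $T^*(n)\ge\frac{1-1/\ln n}{\ln((\ln n-1)n+1)}$. For the upper bound I would prove $T_{n-1}(s)<\frac1{\ln(n-1)}$ for every $s>1$, i.e.\ $g(s):=(s+n-1)\ln s-(s-1)\ln(n-1)>0$. Here $g(1)=0$ and $g'(s)=\ln s+1+\frac{n-1}{s}-\ln(n-1)$ has its minimum over $s>0$ at $s=n-1$, where $g'(n-1)=2$; thus $g'\ge2>0$, $g$ is strictly increasing, and $g(s)>0$ for $s>1$. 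Evaluating at the maximizer $s^\ast$ then yields the strict inequality $T^*(n)<\frac1{\ln(n-1)}$. Since $\ln((\ln n-1)n+1)=\ln(n\ln n-n+1)\sim\ln n$ and $1-1/\ln n\to1$, the lower bound is asymptotic to $\frac1{\ln n}$, as is the upper bound $\frac1{\ln(n-1)}$; squeezing gives $T^*(n)\sim\frac1{\ln n}$.
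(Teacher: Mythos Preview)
Your proof is correct and follows the same overall route as the paper: reduce via Lemma~\ref{lem:all-ones} to a one-parameter family $T_m(\cdot)$, observe the monotonicity in $m$ so that $m=n-1$ dominates, and then analyze the range of $T_{n-1}$. Your parametrization by $s=z/y$ is equivalent to the paper's parametrization by $\alpha=1/(ny)$ through the bijection $s=n(\alpha-1)+1$ that you identify.

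Where you differ is in the supporting calculus, and in both places your arguments are more self-contained than the paper's. For the attainment of $T^*(n)$ when $n>2$, the paper simply asserts that ``a study of $T_{\alpha,n-1}$ shows'' it is increasing then decreasing with a maximizer $\alpha_n>\tfrac{2(n-1)}{n}$; your Taylor expansion $T_{n-1}(s)=\tfrac1n\bigl(1+(\tfrac12-\tfrac1n)(s-1)+o(s-1)\bigr)$ gives a clean and explicit reason why the supremum is attained in the interior precisely when $n>2$, and your convexity computation for $h(s)=(s+1)\ln s-2(s-1)$ handles $n=2$ directly. For the upper bound $T^*(n)<\tfrac1{\ln(n-1)}$, the paper plugs the estimate $\alpha_n>\tfrac{2(n-1)}{n}$ on the maximizer into $T_{\alpha_n,n-1}<\tfrac1{\ln((\alpha_n-1)n+1)}$; you instead prove the pointwise inequality $T_{n-1}(s)<\tfrac1{\ln(n-1)}$ for all $s>1$ via $g'(s)\ge g'(n-1)=2$, which avoids locating the maximizer altogether. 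Both approaches work; yours is arguably the more elementary execution of the same strategy.
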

\begin{proof}
    From Lemma~\ref{lem:all-ones}, $\vec x\in\mathrm{int}(\Sigma)$ is a fixed point of $\vec u_T$, 
    with $\vec x\neq\left(\frac{1}{n}\,\cdots\,\frac{1}{n}\right)$,
    if and only if there exists $\mathcal{K}\subset\set{1,\dots,n}$, $y,z\in\R$,
    such that $\vec x_i=y$ for $i\in\mathcal{K}$, $\vec x_i=z$ for $i\notin\mathcal{K}$,
    $0<k=\abs{\mathcal{K}}<n$, $ky+(n-k)z=1$, $y\e{-y/T}=z\e{-z/T}$, and $y<z$.
    Denote $\alpha=\frac{1}{ny}$.  Since $y<\frac{1}{n}$, we get necessary that $\alpha>1$.
    From $y\e{-y/T}=z\e{-z/T}$, we get $T=T_{\alpha,k}$, where
    \[
      T_{\alpha,k} = \frac{1-\frac{1}{\alpha}}{(n-k)\ln\left(\frac{(\alpha-1)n}{n-k}+1\right)}\enspace.
    \]
    This implies that $\vec u_T$ has a fixed point $\vec x\in\mathrm{int}(\Sigma)$, 
    $\vec x\neq\left(\frac{1}{n}\,\cdots\,\frac{1}{n}\right)$
    if and only if $T\in\mathcal{T}=\set{T_{\alpha,k}, \alpha>1,k\in\set{1,\dots,n-1}}$.
    Let
    \[
      T^*(n) = \sup_{\alpha>1} T_{\alpha,n-1}
      = \sup_{\alpha>1}\frac{1-\frac{1}{\alpha}}{\ln((\alpha-1)n+1)}\enspace.
    \]
    We shall show that $\mathcal{T}={]0,T^*(n)]}$ when $n>2$ and $\mathcal{T}={]0,T^*(2)[}$ when $n=2$.

    First, a study of $T_{\alpha,k}$ as a function of $k$ shows that it is increasing.  It is therefore sufficient to show that $\set{T_{\alpha,n-1}, \alpha>1}={]0,T^*(n)]}$ when $n>2$, and $\set{T_{\alpha,1}, \alpha>1}={]0,T^*(2)[}$ when $n=2$.
    Second, a study of $T_{\alpha,n-1}$ as a function of $\alpha>1$ shows that, when $n>2$, $T_{\alpha,n-1}$ is increasing, then decreasing, tends to $0$ when $\alpha$ goes to infinity, and its maximum is attained for $\alpha=\alpha_n$, where $\alpha_n>\frac{2(n-1)}{n}$.  Hence $\mathcal{T}={]0,T^*(n)]}$.  When $n=2$, $T_{\alpha,1}$ is decreasing, tends to $0$ when $\alpha$ goes to infinity, and to $\frac{1}{2}$ when $\alpha$ goes to $1$.  Hence $T^*(2)=\frac{1}{2}$, and $\mathcal{T}={]0,T^*(2)[}$.

    Moreover, for $n\ge3$,
    \[
      T^*(n)=T_{\alpha_n,n-1}=\frac{1-\frac{1}{\alpha_n}}{\ln((\alpha_n-1)n+1)}
      <\frac{1}{\ln((\alpha_n-1)n+1)}\enspace,
    \]
    and since $\alpha_n>\frac{2(n-1)}{n}$, we get $T^*(n)<\frac{1}{\ln(n-1)}$.
    For the lower bound, we get $T_{\ln n,n-1}\le T^*(n)$, since $\ln n>1$.
\end{proof}

Proposition~\ref{prop:allones-bornes} deals with the very special case of a complete graph.
In more general circumstances, the exact computation of the critical temperature seems out of range. However, we can obain numerically a lower bound of the critical temperature, which seems to be an accurate estimate, using the following homotopy-type method.
We first choose two random initial vectors on the simplex. Then, we iterate the map $\vec f_T$ from each of these vectors. For small values of $T$, this yields with an overwhelming probability two different webranks. Then, we increase the temperature $T$, and keep iterating the map $\vec f_T$ on each of these webranks, until the two webranks coincide. This yields a lower bound of the critical temperature. Then, we repeat this procedure, with new random initial vectors, until the lower bound of the critical temperature is not improved any more.
Note that the simpler method consisting in keeping $T$ fixed and iterating $\vec f_T$ from various initial conditions (random vectors or Dirac distributions on a vertex of the simplex) experimentally yields an under estimate of the critical temperature.

Using the previously described homotopy-type method, we computed numerically the critical temperature for two families of graphs. These experiments reveal that the $1/\ln(n)$ asymptotics obtained for the complete graph gives a good general estimate. We first considered the ring graph, with $n$ nodes, in which node $i$ is connected to its two neighbors and to itself.  The critical temperature, for $n=51, 201, 501$ and $1001$ is shown by stars in Figure~\ref{fig:universality-Tcrit-all-ones}. The exact value of the critical temperature of the complete graph with $n$ nodes, $T^*(n)$, is drawn as a continuous curve. We see that the critical temperatures of the ring and complete graphs are essentially proportional. We also computed numerically the critical temperature for a standard model of random directed graph, in which the presence of the different arcs are independent random variables, and for every $(i,j)$, the probability of presence of the arc $(i,j)$ is given by the same number $p$. We took
$p=10/n$, so that every node is connected to an average number of $10$ nodes. The corresponding critical temperatures are represented by circles. The values of these critical temperatures do not seem to change significantly with the realization of the random graph, hence, each of the values which are represented correspond to a unique realization.
\begin{figure}[t]
\begin{center}
   \includegraphics[width=\textwidth]{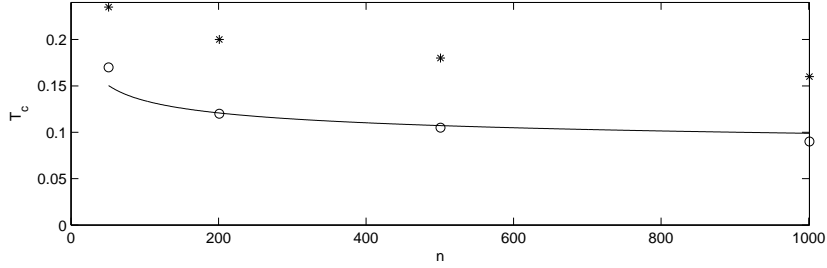}
    \caption{Estimation of the critical temperature $T_c$
    as a function of the number of nodes $n$
    for the complete graph (continuous curve),
    the ring graph (stars) and random graphs (circles).}
    \label{fig:universality-Tcrit-all-ones}
\end{center}
\end{figure}


\begin{remark}\label{rem:matrices22}
Let us briefly discuss about the case of an arbitrary $2\times2$ irreducible matrix $C$ with the weight function $g_T=\e{x/T}$. In this case, some elementary calculations give information about the critical temperature~\cite{AGN06}.
Firstly, the critical temperature for a graph of only two nodes is always less than $1$, since it can be shown that $\vec u_T$ has a unique fixed point if $T\ge1$. This is the best general upper bound that can be given for problems of this dimension since for every $T<1$, we can construct a $2\times2$ matrix such that $\vec u_T$ has at least two fixed points.
Moreover, one can show that, for every $T>0$, the map $\vec u_T$ has at most $5$ fixed points and does not have any orbit of period greater than $1$.
Numerical experiments show that for a $2\times2$ irreducible matrix, the number of fixed points of the map $\vec u_T$ can change $0,1,2$ or even $3$ times when decreasing the temperature $T$.

This can be seen on Figures~\ref{fig:135ptsfxes}, which were obtained experimentally. Let $\alpha=C_{11}/C_{12}$ and $\beta=C_{22}/C_{21}$. For a specified $\beta$, $\vec u_T$ has $5$ fixed point if $(T,\alpha)$ belongs to the black region, $3$ fixed points in the grey region and $1$ fixed point in the white region.
  \begin{figure}[t]
  \begin{center}
    \includegraphics[width=\textwidth]{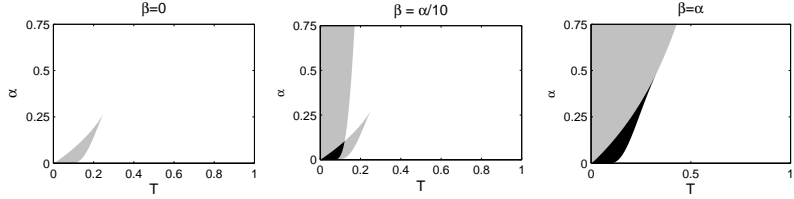}
    \caption{For $2\times2$ matrices, the map $\vec u_T$ has $5$ fixed points
    for $(T,\alpha)$ in the black region,
    $3$ fixed points for $(T,\alpha)$ in the grey region,
    and $1$ fixed point otherwise.}
    \label{fig:135ptsfxes}
  \end{center}
  \end{figure}
\end{remark}


\section{Experiments on a subgraph of the Web}\label{sec:application-web}

In this section, we briefly present our experiments of the $T$-PageRank on a large-scale example.
We consider a subgraph of the Web with about $280000$ nodes which has been obtained by S. Kamvar from a crawl on the Stanford web\footnote{The adjacency matrix can be found on \texttt{http://www.stanford.edu/\~{}sdkamvar/research.html}.}.
We use the variant of our model presented in Section~\ref{sec:refinement}, with a transition matrix given by
\[ M(\vec x)_{ij} 
  = \gamma\,\frac{C_{ij}\,\e{\vec x_j/T}}{\sum_kC_{ik}\,\e{\vec x_k/T}}
  +(1-\gamma)\,\frac{\e{\vec x_j/T}}{\sum_k\e{\vec x_k/T}}\enspace,
\]
where we suppose that for each dangling node $i$ (i.e.\ a node corresponding to a webpage without hyperlink), the $i$th row of the matrix $C$ is a row of all ones.
The chosen damping factor is $\gamma=0.85$.
We have computed the $T$-PageRank from the recurrence~$(\ref{eq:iter-fT})$ for various temperatures $T$ and initial rankings.  As expected, when the temperature $T$ is large,  the $T$-PageRank is very close to the classical PageRank, and when $T$ approaches zero, arbitrary close initial rankings can induce totally different $T$-PageRanks.  The critical temperature experimentally seems to be about $T=0.033$. It has the same order of magnitude as the $T^*(n)=0.06148$ estimate discussed in Section~\ref{sec:particular-cases}.

As in~\cite{VLD07}, we represented in a log-log scale the cumulative distribution function of the PageRank, i.e\, the proportion of pages for which the $T$-PageRank is larger than a given value, as a function of this value. In Figure~(a), we show the successive $T$-PageRanks obtained for \emph{increasing} temperatures from $T=0.015$ to the critical temperature $T=0.033$ by the following variant of the previously described homotopy method: for $T=0.015$, we iterate the map $\vec f_T$, with a Dirac mass on a vertex of the simplex as initial ranking, until a fixed point is reached. Then, for each new value of $T$, we iterate $\vec f_T$ until a fixed point is reached, starting from the previous fixed point. For $T\le0.032$ the distribution of the $T$-PageRank is quite different from this of the PageRank and it comes closer suddenly for $T=0.033$.
In Figure~(b), we show the successive $T$-PageRanks obtained by a similar method for \emph{decreasing} temperatures from $T=0.033$ to $0.009$, with the classical PageRank as an initial ranking. The latter procedure may be compared with simulated annealing schemes, in which the temperature is gradually decreased. Until $T=0.0091$, the distribution of the $T$-PageRank is quite similar to that of the classical PageRank (see a zoom in Figure~(c)). With $T=0.009$, the $T$-PageRank moves suddenly away from the PageRank. These figures suggest that the gap between webpages considered as ``good'' and ``bad'' is more pronounced with the $T$-PageRank than with the classical PageRank.
\begin{figure}[htb]
\centering
 \subfloat[][]{\includegraphics[width=0.9\textwidth]{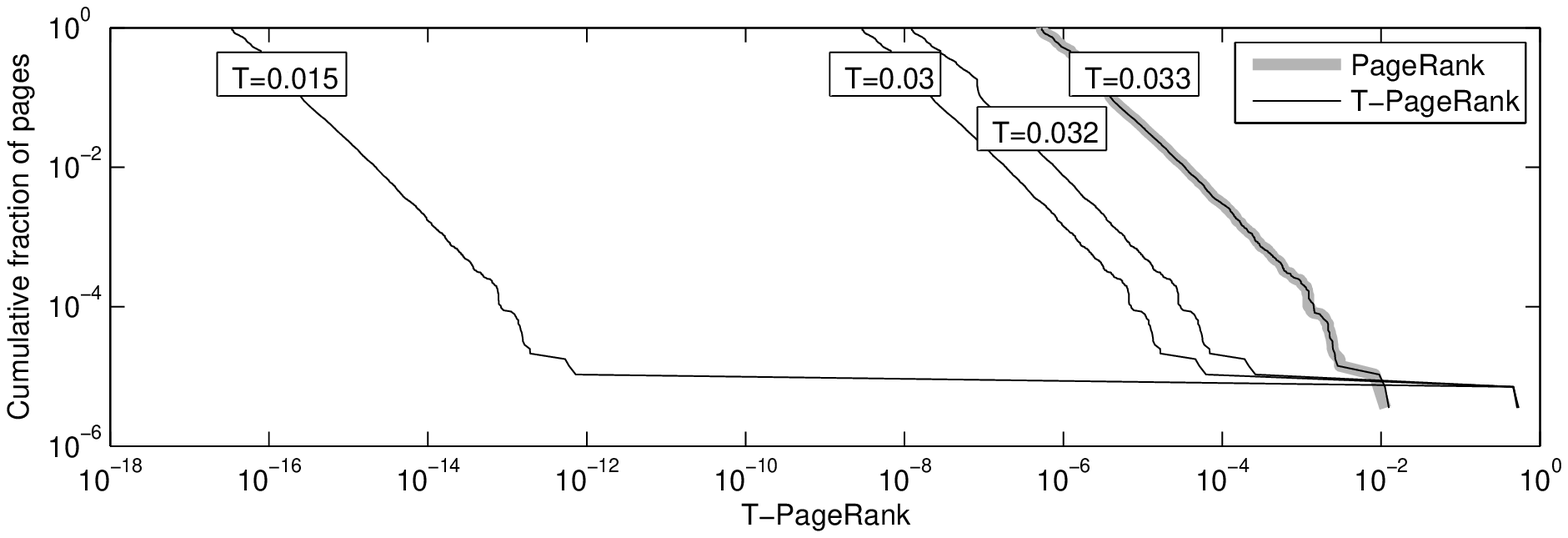}}
  \subfloat[][]{\includegraphics[width=0.9\textwidth]{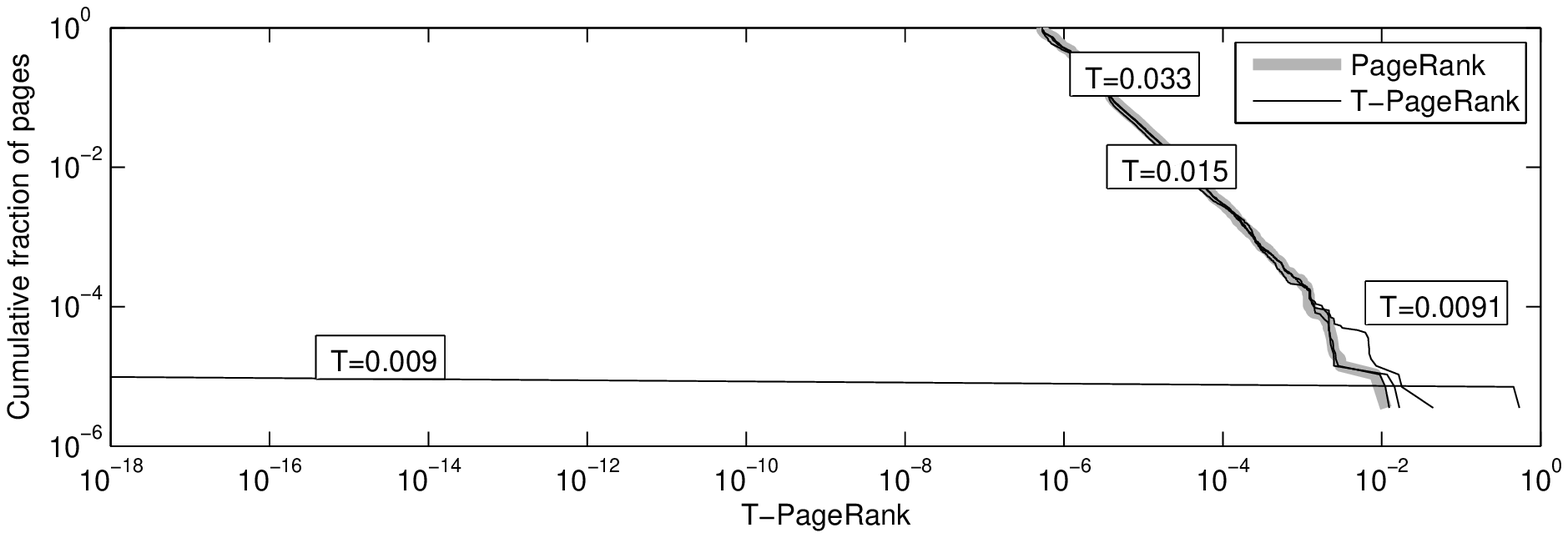}}
  \subfloat[][]{\includegraphics[width=0.9\textwidth]{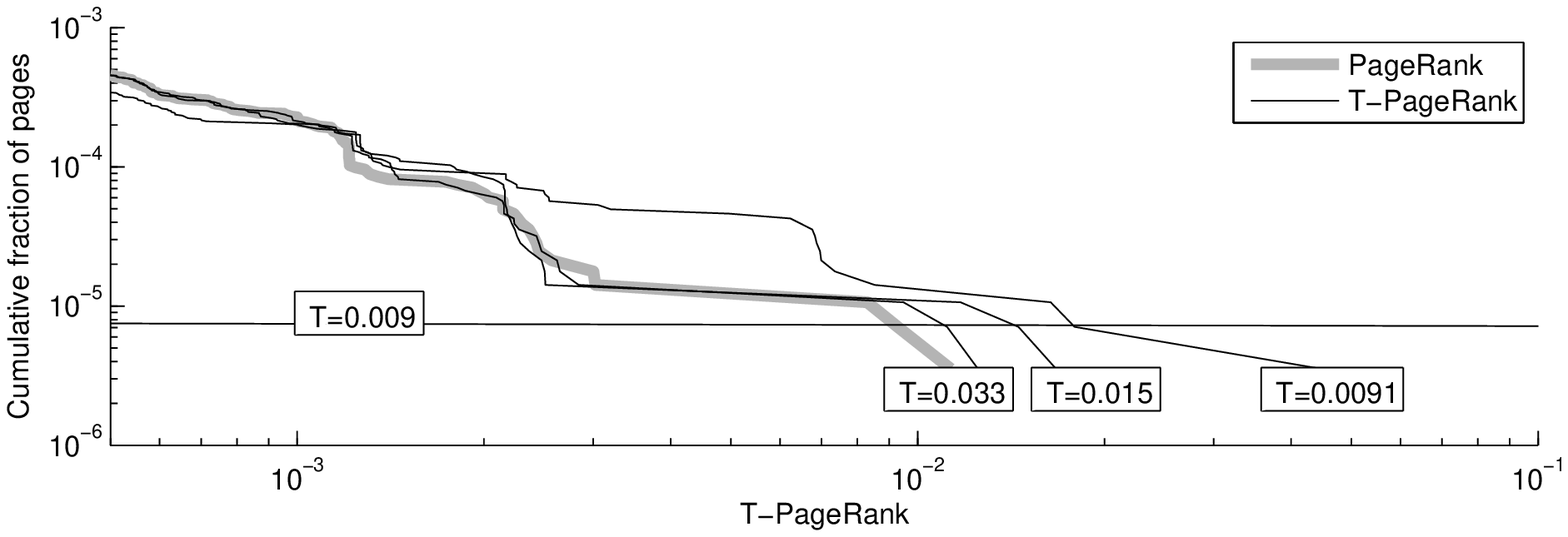}}
    \caption{
       Fraction of pages having a PageRank larger than a particular value.
       (a) $T$-PageRanks computed with increasing temperatures from $T=0.015$ to $0.033$,
       with a vertex of the simplex as initial ranking.
       (b) $T$-PageRanks computed with decreasing temperatures from $T=0.033$ to $0.009$,
       with the classical PageRank vector as initial ranking.
       (c) Zoom of Figure~(b).}
    \label{fig:stanfordscores-distribution}
\end{figure}

We have also compared the five best nodes for the classical PageRank and for the $T$-PageRank with decreasing temperatures $T=0.033$, $0.015$ and $0.0091$. As we see in Table~\ref{tab:top-5}, for $T=0.033$, the PageRank and $T$-PageRank give a similar ranking for the top-five. But for smaller temperatures as $T=0.015$ or $T=0.0091$, even the two best nodes are exchanged.
\begin{table}[htb]
\centering
\begin{tabular}{ccc}
$T=0.033$&$T=0.015$&$T=0.0091$\\
\hline
      1 &     2 &     2 \\
      2 &     1 &     1 \\
      3 &     3 &     3 \\
      4 &     6 &     46 \\
      5 &     7 &     33
\end{tabular}
\caption{The five best nodes of the $T$-PageRank for several values of~$T$: the numbers refer to the rankings acccording to the classical PageRank.}\label{tab:top-5}
\end{table}

Since for this special set of data, the correspondence between the page numbers and the urls is not available, one cannot interpret the discrepancies between the PageRank and the $T$-Pagerank in Table 6.1. In~\cite{Pov07}, J.-P. Poveda made similar experiments on the larger matrix obtained by S. Kamvar for a crawl of the union of the Stanford and Berkeley Webs\footnote{Also available on \texttt{http://www.stanford.edu/\~{}sdkamvar/research.html}.}, with about 685000 nodes, for which, this time, the correspondence between some pages and the main urls is given. These experiments suggest that the $T$-PageRank obtained by the latter scheme, in which the temperature is gradually decreased, as illustrated in Figure~(b), might be of practical interest. A full experimentation, on a real scale web, is beyond the scope of the present paper.

As a concluding remark, we would like to point out that our results might be considered as an argument in favor of the thesis that one should not use PageRank type measures to assess quality. Indeed, the validity of the classical PageRank relies on an ideal view of the web, in which the makers of pages are thought of as experts, creating hyperlinks only to pages they carefully examined, and judged by themselves to be of interest. In the real world, however, the web makers may be influenced by factors like reputation, to which the webrank participates. Within the limits of the model, our results show that pathological phenomena, like getting non unique and even not meaningful rankings, may occur in a self-referential world in which the websurfers would excessively rely on the web ranking, rather than on their own judgement.

\end{document}